\documentclass[12pt]{amsart}
\usepackage{amsthm, amssymb, amsmath,enumitem}
\usepackage{amsfonts, amscd, color}
\usepackage{epsfig,multicol}
\usepackage{tikz-cd}
\usepackage[colorlinks=true, linkcolor=blue, citecolor=blue, urlcolor=blue]{hyperref}

\setlength{\topmargin}{0cm} \setlength{\oddsidemargin}{0.5cm}
\setlength{\evensidemargin}{0.5cm} \setlength{\textheight}{ 22cm}
\setlength{\textwidth}{15cm}

\theoremstyle{plain} \numberwithin{equation}{section}
\newtheorem{theo}{Theorem}[section]
\newtheorem{coro}[theo]{Corollary}
\newtheorem{prop}[theo]{Proposition}
\newtheorem{lemm}[theo]{Lemma}

\theoremstyle{definition}
\newtheorem{defi}{Definition}[section]

\def\C{\mathbb C}
\def\CS{\mathbb C^*}

\def\CZN{\C^n\times\Z^n}
\def\N{\mathbb N}
\def\R{\mathbb R}
\def\Q{\mathbb Q}
\def\T{\mathbb T}
\def\Z{\mathbb Z}
\def\b{b}
\def\v{v}
\def\i{\sqrt{-1}}
\def\OL{\overline}

\def\ME{\mathcal E}
\def\MF{\mathcal F}

\def\MO{\mathcal O}
\def\MR{\mathcal R}

\def\bi{\beta_i}

\def\De{\Delta}
\def\La{\Lambda}
\def\la{\lambda}

\def\RP{\R_{>0}}
\def\Si{\Sigma}

\def\vph{\varphi}
\def\bp{\beta_{I}^{\perp}}

\def\1{\mathbf 1}
\def\0{\mathbf 0}

\DeclareMathOperator{\Lie}{Lie}
\DeclareMathOperator{\Ker}{Ker}
\DeclareMathOperator{\Hom}{Hom}
\DeclareMathOperator{\Aut}{Aut}
\DeclareMathOperator{\TKVB}{TKVB}
\DeclareMathOperator{\TKVS}{TKVS}
\DeclareMathOperator{\SKVB}{SKVB}
\DeclareMathOperator{\SKVS}{SKVS}

\setcounter{section}{-1}

\begin{document}

\title{Klyachko vector bundles over topological toric manifolds }
\author[Y.Cui]{Yong Cui} 
\address{Department of Mathematics, University of Maryland, Maryland, MD, USA}
\email{cuiyong@umd.edu}

\begin{abstract}
We give a Klyachko-type classification of topological/smooth/holomorphic $\T=(\CS)^n$-equivariant vector bundles that are equivariantly trivial over invariant affine charts. This generalizes Klyachko's classification of toric vector bundles.

\end{abstract}

\maketitle

\tableofcontents
\thispagestyle{empty}

\section{Introduction}
Let $X=X(\De)$ be an $n$-dimensional nonsingular toric variety over $\C$ associated with a toric fan $\De$. A toric vector bundle on  $X$ is a $\T=(\CS)^{n}$-equivariant algebraic vector bundle $\ME\to X$. In \cite{Kly90}, Klyachko classified all toric vector bundles on $X$ in terms of finite dimensional complex vector spaces $E$ with collections of $\Z$-graded filtrations $\{ E^{\alpha}(i)\}$ indexed by the rays $\alpha$ in the fan $\Delta$. A key theorem in that paper states that a toric vector bundle is equivariantly trivial over each affine piece $U_{\sigma}$ of $X$ that corresponds to a cone $\sigma\in \Delta$.\\
\indent Recently in \cite{IFM12}, the authors constructed topological toric manifolds $X(\Delta)$ from the so-called topological fans $\Delta=(\Sigma,\beta)$ that generalize the class of compact non-singular toric varieties. Similar to toric varieties, a topological toric manifold has an open dense orbit and there is a one-to-one correspondence (Theorem \ref{orbit-cone}) between the set of orbits and the set of abstract cones. We give a Klyachko type classification of a special class of so-called topological/smooth/holomorphic Klyachko vector bundles which are  topological/smooth(i.e. $C^{\infty}$)/holomorphic equivariant vector bundles over topological toric manifolds which are equivariantly trivial over each affine piece corresponding to an abstract cone. The category of topological (resp.~smooth) Klyachko vector bundles over a topological manifold $X$ is denoted by $\TKVB_{X}$ (resp.~$\SKVB_{X}$). \\
 Before stating the main theorem, we provide the following table of notations in topological toric manifolds reminiscent of notations in toric geometry. The group $\MR:=\C\times \Z$ of continuous/smooth (co)characters of $\CS$ is equipped with a ring structure and a bracket operation $\langle, \rangle$. We define two partial orders $\geq_{c},\geq_{s}$ on $\MR$ where $c$ stands for continuous and $s$ stands for smooth. 

\begin{table}[h]
    \centering
    \begin{tabular}{|c|c|c|}
        \hline
         &Toric Varieties &TTM \\ \hline
         Fans &toric fan $\De$ &topological fan $\De=(\Si,\beta)$ (Def.~\ref{top-fan})\\ \hline
        Cones &$\sigma$ &$I$ (Def.~\ref{simcom}) \\ \hline
        Affine charts &$U_{\sigma}$ & $U_{I}$(Def.~\ref{ttm}) \\ \hline
        character group of $\T$ &$M$ &$\MR^{n}$(Lemma~\ref{allcharacters}) \\ \hline
        cocharacter group of $\T$ &$N$ &$\MR^{n}$(Lemma~\ref{allcharacters}) \\ \hline
        orders & total order $\geq$ on $\R$ &partial orders $\geq_{c},\geq_{s}$ on $\MR$(Def.~\ref{geqc},\ref{geqs})\\ \hline
        distinguished points &$\gamma_{\sigma}$ &$\gamma_{I}$(Def.~\ref{orbit})\\ \hline
        stabilizer subgroup of $\gamma_{\sigma}$ and $\gamma_{I}$ &$T_{\sigma}$ &$\T_{I}$(Def.~\ref{bp}) \\ \hline
        character groups of $T_{\sigma}$ and $\T_{I}$ &$M_{\sigma}=M/(\sigma^{\perp}\bigcap M)$ &$\MR^{n}/\bp$(Def.~\ref{bp}, Lemma~\ref{character}) \\ \hline
    \end{tabular}
    \caption{Comparison of notations}
    \label{tab:example}
\end{table}

To motivate our main theorems, we recall Klyachko's classical result summarized in \cite{Pay07}. 
\begin{defi}[Klyachko's Compatibility Condition, cf.~p2 of \cite{Pay07}]
    Let $\De$ be a toric fan. For each cone $\sigma\in \De$, there is a decomposition $E=\bigoplus_{[u]\in M_{\sigma}}E_{[u]}$ such that 
    \[E^{\rho}(i)=\sum_{[u](v_{\rho})\geq i}E_{[u]} \]
    for every $\rho\preceq \sigma$ and $i\in \Z$.
\end{defi}

\begin{theo} [cf.~p5 of \cite{Pay07}]
    The category of toric vector bundles on $X(\De)$ is naturally equivalent to the category of finite-dimensional $k$-vector spaces $E$ with collections of decreasing filtrations $\{E^{\rho}(i)\}$ indexed by the rays of $\De$, satisfying Klyachko's Compability Condition. 
\end{theo}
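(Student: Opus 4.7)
The plan is to establish the equivalence of categories by constructing explicit functors in both directions and verifying that they are mutually quasi-inverse. The cornerstone of the whole argument is the fact, already highlighted in the introduction, that every toric vector bundle on $X(\De)$ is equivariantly trivial on each affine chart $U_{\sigma}$. This is what allows one to translate between a global geometric object and the finite linear-algebraic data indexed by rays.

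For the forward functor, given a toric vector bundle $\ME \to X(\De)$, I would take $E$ to be the fiber $\ME_{\gamma_{0}}$ at the distinguished point of the open orbit. For each cone $\sigma \in \De$, the equivariant trivialization $\ME|_{U_\sigma} \cong U_\sigma \times E$ makes $E$ a representation of $T_{\sigma}$, yielding a weight decomposition $E=\bigoplus_{[u]\in M_\sigma}E_{[u]}$. For each ray $\rho \preceq \sigma$ with primitive cocharacter $v_{\rho}$, define
\[ E^{\rho}(i) = \sum_{[u](v_{\rho})\geq i} E_{[u]}.\]
One needs to check that this filtration depends only on $\rho$, not on the ambient $\sigma$, which follows from the fact that any two maximal cones containing $\rho$ share $\rho$ as a face and the weight decompositions restrict compatibly. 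The compatibility condition then holds tautologically from the $T_{\sigma}$-weight decomposition on each chart.

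For the backward functor, starting from $(E, \{E^{\rho}(i)\})$ satisfying Klyachko's Compatibility Condition, on each affine chart $U_{\sigma}$ one uses the guaranteed decomposition $E=\bigoplus_{[u]\in M_{\sigma}}E_{[u]}$ to build the equivariantly trivial bundle $U_{\sigma}\times E$, with the $T$-action twisted so that the summand $E_{[u]}$ carries weight $[u]$. These local models must then be glued over the overlaps $U_{\sigma}\cap U_{\tau} = U_{\sigma\cap\tau}$ using the decompositions coming from the common face $\sigma\cap\tau$; the compatibility condition ensures that the weight pieces on either side agree, providing the gluing isomorphism canonically.

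The main obstacle is verifying that the two functors are mutually inverse, which amounts to two technical checks. First, one must show that a $T$-equivariant morphism of bundles is the same data as a linear map $E\to E'$ respecting all filtrations $E^{\rho}(i)\subseteq E'^{\rho}(i)$; this uses equivariant triviality to reduce everything to the fiber $E$ and a weight-by-weight comparison. Second, one must verify the cocycle condition on triple overlaps $U_{\sigma}\cap U_{\tau}\cap U_{\eta}$, which reduces to the fact that all three local decompositions refine the common face decomposition and therefore glue consistently. The delicate point is that the weight decomposition on a face $\tau \preceq \sigma$ refines that on $\sigma$ (since $M_{\sigma}$ surjects onto $M_{\tau}$), so one must keep careful track of how the filtrations along different rays of $\sigma$ sit inside the $M_{\sigma}$-grading; once this bookkeeping is in place, both composites of the functors are seen to be naturally isomorphic to the identity.
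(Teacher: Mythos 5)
The paper does not actually prove this statement --- it is recalled from Klyachko via Payne as background --- but your outline is correct and coincides with the strategy the paper itself uses to prove the generalization in Theorem~\ref{main}: weight decomposition of the fiber over a point of the open orbit on each equivariantly trivial chart, ray-indexed filtrations whose independence of the ambient cone follows from extendability of the transition functions in both directions, and gluing by extending $\varphi_{\sigma}\varphi_{\tau}^{-1}$ from the dense torus. The only point you pass over lightly is that the gluing over $U_{\sigma}\cap U_{\tau}$ is not a literal agreement of the two weight decompositions (which may genuinely differ) but the regular extendability of the character-valued cocycle, which is equivalent to mutual inclusion, hence equality, of the induced filtrations --- the analogue of Lemma~\ref{cext} in the paper.
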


\begin{defi} [Def.~\ref{TKVS}]
For a topological toric manifold $X$, the category $\TKVS_{X}$ is the category whose objects are vector spaces $E$ with a poset $P$ of subspaces $E^{i}(\mu)$ of $E$ indexed by $\mu\in \MR,i\in \Si^{(1)}$ which satisfy the following compatibility condition:\\
    For any $I\in \Si$, there exists a $\MR^{n}/\bp$-grading   
    \[E=\bigoplus_{[\chi]\in \MR^{n}/\bp}E_{[\chi]}\] for which
    \[E^{i}(\mu)=\sum_{\langle \chi,\beta_{i} \rangle {\geq_{c}}\mu}E_{[\chi]},\hspace{0.2cm} \forall i\in I.\]
    A morphism $f:(E,\{E^{i}(\mu)\})\to (F,\{F^{i}(\mu)\})$ in $\TKVS_{X}$ is a linear transformation $f:E\to F$ such that $f(E^{i}(\mu))\subset F^{i}(\mu)$ for all $i\in \Si^{(1)}$ and $\mu\in \MR$. $f$ is an isomorphism if $E\to F$ is an isomorphism of complex vector spaces. $f$ is a monomorphism if $E\to F$ is a monomorphism of complex vector spaces. $f$ is an epimorphism if $E\to F$ is an epimorphism of complex vector spaces and $f(E^{i}(\mu))=F^{i}(\mu)$ for all $i\in \Si^{(1)}$ and $\mu\in \MR$.
\end{defi}
The category $\SKVS$ is defined similarly with $\geq_{c}$ replaced by $\geq_{s}$.

The main theorems states that 
\begin{theo}[Theorems \ref{main}, \ref{main2}]
    $\TKVB_{X}\simeq \TKVS_{X}, \SKVB_{X}\simeq \SKVS_{X}.$
\end{theo}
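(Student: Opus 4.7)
The plan is to construct mutually quasi-inverse functors $F\colon \TKVB_X\to \TKVS_X$ and $G\colon \TKVS_X\to \TKVB_X$, and then check that the compositions $F\circ G$ and $G\circ F$ are naturally isomorphic to the identities. The smooth version is handled in parallel, replacing $\geq_c$ by $\geq_s$ and $C^0$-data by $C^\infty$-data throughout. The whole argument is modeled on Klyachko's, the new content being that everything must be done with continuous/smooth characters in $\MR=\C\times\Z$, where the partial order structure introduced in the earlier sections plays the role that the total order on $\Z$ played classically.

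For the forward functor $F$: given a Klyachko bundle $\ME\to X$, set $E$ to be the fiber over the distinguished point of the open orbit. Since $\ME$ is equivariantly trivial on each affine chart $U_I$, restricting to $U_{\{i\}}$ for a ray $i\in\Si^{(1)}$ produces an $\MR^n$-weight decomposition $E=\bigoplus_{[\chi]} E_{[\chi]}$ relative to the stabilizer of $\gamma_{\{i\}}$, and I then define
\[E^{i}(\mu)=\sum_{\langle \chi,\beta_i\rangle \geq_{c}\mu} E_{[\chi]}.\]
One must verify that this collection of subspaces is independent of the chosen trivialization (because any two trivializations differ by an equivariant automorphism, which permutes weight spaces of the same weight and so preserves these sums) and that it satisfies the compatibility condition for every $I\in\Si$ (using the trivialization of $\ME$ over $U_I$ directly). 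Functoriality in morphisms of equivariant bundles is then routine.

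For the backward functor $G$: given data $(E,\{E^{i}(\mu)\})$, the compatibility condition supplies, for each $I\in\Si$, a decomposition $E=\bigoplus_{[\chi]\in\MR^n/\bp} E_{[\chi]}^{I}$ which defines an equivariantly trivial bundle $\ME_I=U_I\times E\to U_I$ with the $\T$-action twisted by the weights $[\chi]$ on each summand. To glue these to a bundle on $X=\bigcup_I U_I$, I observe that on the overlap $U_I\cap U_J=U_{I\cap J}$ the decomposition for $I\cap J$ is a coarsening of both decompositions (again from the filtrations $\{E^i(\mu)\}_{i\in I\cap J}$), which produces a canonical equivariant transition isomorphism $\ME_I|_{U_{I\cap J}}\to \ME_J|_{U_{I\cap J}}$; the cocycle condition on triple intersections follows because all transition maps are built out of the same universal filtration data. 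The resulting bundle is equivariantly trivial over each $U_I$ by construction.

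The main obstacle, and the step which genuinely uses the definitions of $\geq_c$ and $\geq_s$, is proving that the weight decomposition over $U_I$ is \emph{uniquely determined} by the filtrations $\{E^i(\mu)\}_{i\in I}$, not merely that one exists. Concretely, one needs a lemma saying that for any $I$ and any equivariantly trivial equivariant bundle on $U_I$, the isotypic summands $E_{[\chi]}$ can be recovered as explicit intersections of the subspaces $E^{i}(\mu)$ (and their ``complements'' in neighboring filtration levels) as $i$ ranges over $I$ and $\mu$ over $\MR$. This is where the partial orders $\geq_c$ on $\MR$ must interact cleanly with the pairing $\langle\,\cdot\,,\beta_i\rangle$: one has to verify that distinct weights $[\chi],[\chi']$ on the stabilizer $\T_I$ are separated by some pairing against some $\beta_i$, $i\in I$, so that the filtrations jointly refine to the weight decomposition. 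Once this separation lemma is in place, checking $F\circ G\cong \mathrm{id}$ and $G\circ F\cong \mathrm{id}$, as well as the characterization of iso/mono/epimorphisms, reduces to comparing weight decompositions on each chart, and the equivalence follows.
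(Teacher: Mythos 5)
Your overall architecture (mutually quasi-inverse functors, chartwise weight decompositions, filtrations indexed by rays) matches the paper's Klyachko-style proof, but two steps as you describe them would not go through. First, the heart of the argument is missing: what makes $\geq_{c}$ (resp.\ $\geq_{s}$) the right order is the extendability criterion for characters --- $\chi^{\alpha}$ extends continuously (resp.\ smoothly) from $\T$ to $U_{I}$ iff $\langle\alpha,\beta_{i}\rangle\geq_{c}0$ (resp.\ $\geq_{s}0$) for all $i\in I$ (Lemmas \ref{ext1}, \ref{cext}, \ref{ext2}, \ref{smext}). In your gluing step the transition map is not a ``canonical isomorphism coming from a common coarsening'': the gradings supplied by the compatibility condition for $I$ and for $J$ need not agree as decompositions of $E$ over $U_{I\cap J}$ (they only induce the same filtrations), and the actual transition function is $\varphi_{I}(t)\varphi_{J}(t)^{-1}$ on the dense orbit, whose matrix entries are characters $\chi_{i}\psi_{j}^{-1}$; one must show these extend continuously/smoothly across the boundary strata of $U_{I\cap J}$, and by the extension lemma this is exactly equivalent to the filtration inclusions $E^{i,J}(\mu)\subset E^{i,I}(\mu)$ for $i\in I\cap J$. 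Without this your transition maps are undefined off the open orbit, and the cocycle condition (which in the paper is free, by uniqueness of continuous extension from a dense subset) has no content.

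Second, the ``separation lemma'' you identify as the main obstacle is false as stated: the weight decomposition $E=\bigoplus E_{[\chi]}$ is \emph{not} uniquely determined by the filtrations $\{E^{i}(\mu)\}_{i\in I}$ --- only the associated graded pieces, hence the multiset of weights with multiplicities, are determined; the summands themselves involve non-canonical complements, as your own parenthetical already hints. An attempt to recover the isotypic summands as intersections of the $E^{i}(\mu)$ fails already in Klyachko's algebraic setting. What is actually needed, and what the paper proves (Lemmas \ref{indext}, \ref{indext2}), is weaker: two gradings inducing the same filtrations give isomorphic $\T_{I}$-representations, hence isomorphic trivial equivariant bundles on $U_{I}$, and more generally $\Hom_{\T}(U_{I}\times E,U_{I}\times F)$ is canonically the space of linear maps preserving the filtrations. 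That morphism-level statement is what makes the inverse functor well defined up to natural isomorphism and what matches morphisms on the two sides; aiming instead at uniqueness of the decomposition is the wrong target and cannot be achieved.
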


The holomorphic Klyachko vector bundles degenerate to the toric case: 
\begin{theo} [Theorem \ref{main4}]
    A holomorphic Klyachko vector bundle $\ME$ over a complex topological toric manifold $X=X(\De)$ is $\T$-equivariantly biholomorphic to a toric vector bundle over a toric variety. 
\end{theo}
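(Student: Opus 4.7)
\textbf{Proof plan for Theorem~\ref{main4}.} My strategy rests on the observation that although the group of continuous (or smooth) characters of $\CS$ is $\MR=\C\times\Z$, the subgroup of \emph{holomorphic} characters is just the integer sub-lattice $\Z\hookrightarrow\MR$ given by $z\mapsto z^{k}$. Hence in a holomorphic Klyachko vector bundle every equivariant local trivialization over $U_{I}$ splits the fibre into weight spaces whose $\T_{I}$-characters are integer, and every filtration jump of $E^{i}(\mu)$ occurs only at values of $\mu$ lying in the integer part of $\MR$.

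First, I would establish a holomorphic analogue of Theorem~\ref{main}. The target category is defined exactly like $\TKVS_{X}$ but the $\MR^{n}/\bp$-grading $E=\bigoplus_{[\chi]}E_{[\chi]}$ is required to be supported on the image of $\Z^{n}/(\Z^{n}\cap\bp)$. The proof follows line-by-line from that of Theorem~\ref{main}, the only new input being that a holomorphic representation of the algebraic subtorus $\T_{I}\subset\T$ automatically decomposes into weight spaces with integer weights. In particular the filtrations $E^{i}(\mu)$ of a holomorphic Klyachko bundle are locally constant in $\mu$ off the integer lattice.

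Second, I would extract from this integer-weight data a classical toric fan $(\Sigma,\tilde\beta)$. The vectors $\tilde\beta_{i}\in\Z^{n}$ are recovered from the restriction of the bracket pairing $\langle-,\beta_{i}\rangle$ to $\Z^{n}\subset\MR^{n}$, and the partial order $\geq_{c}$ restricts to the usual total order on $\Z\subset\MR$. The compatibility condition then transforms verbatim into Klyachko's classical compatibility condition for the fan $(\Sigma,\tilde\beta)$, so the classical Klyachko equivalence produces a toric vector bundle $\ME'$ over the toric variety $Y=X(\Sigma,\tilde\beta)$.

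Finally, I would glue local equivariant biholomorphisms over the matching charts $U_{I}\subset X$ and $U_{I}'\subset Y$ into a global $\T$-equivariant biholomorphism $\ME\to\ME'$ covering $X\to Y$. The main obstacle I expect is this last step: checking that the two complex structures glue compatibly, i.e.\ that the transition data of $X(\De)$ on $U_{I}\cap U_{J}$ agrees, in the holomorphic category, with that of $Y$ on the corresponding overlap. This reduces to verifying that any non-integer component of $\beta$ contributes only to the smooth $\T$-action (through the $\geq_{s}$-order) and is invisible to holomorphic maps; once this is done, replacing $\beta$ by its integer shadow $\tilde\beta$ leaves the holomorphic gluings unchanged and the biholomorphism is forced by the local models.
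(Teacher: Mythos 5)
There is a genuine gap at exactly the point you flag as the main obstacle, and it cannot be closed in the form you propose. Your plan keeps the possibly non-integral fan data $\beta$, passes to an ``integer shadow'' $\tilde\beta$, builds a separate toric variety $Y=X(\Si,\tilde\beta)$ via the classical Klyachko correspondence, and then tries to glue a $\T$-equivariant biholomorphism $X\to Y$ on the grounds that the non-integer components of $\beta$ are ``invisible to holomorphic maps.'' This is backwards: those components are not invisible, they are forbidden. The hypothesis that the $\T$-action on $X(\De)$ is holomorphic forces, in each equivariant chart $\varphi_I$, the characters $g\mapsto\chi^{\alpha^I_i}(g)=|g|^{b+\i c}(g/|g|)^{v}$ to be holomorphic in $g$, and a direct Cauchy--Riemann computation (Lemma~\ref{holchar}) shows this happens only when $\alpha^I_i\in diag(\Z^n)$; inverting the duality relation \eqref{dual} then gives $\beta_i\in diag(\Z^n)$. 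So $X$ is already a toric variety on the nose, there is no non-integer part to discard, and no auxiliary $Y$ or gluing argument is needed. Conversely, if $\beta$ genuinely had a non-integral component, the $\T$-action could not be holomorphic and the theorem's hypotheses would fail, so your shadow construction has nothing to apply to.

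A second missing input: even to compare the given abstract $\T$-equivariant complex structure on $X$ with the standard complex structure on the local models $\C^I$, you need the result of Ishida \cite{Ish11} that the equivariant smooth charts $\varphi_I:U_I\to\bigoplus_{i\in I}V(\chi^{\alpha^I_i})$ are automatically holomorphic charts for any invariant complex structure. Your proposal never identifies the two structures, so the ``local models'' invoked in your final gluing step are not actually available. Once these two points (Ishida's chart theorem and the integrality of holomorphic characters) are supplied, your first step --- decomposing the fiber into integer weight spaces and observing that the transition cocycle is therefore algebraic --- does agree with the second half of the paper's argument and finishes the proof.
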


To deal with a canonical exact sequence (Theorem \ref{canonical}) of equivariant complex vector bundles where morphisms are $\R$-linear in fibers, we introduce the following two categories:
\begin{defi} [Def. \ref{SKVB-real}]
    For a topological toric manifold $X$, the category $\widetilde{\SKVB_{X}}$ is the category whose objects are smooth Klyachko vector bundles over $X$ and whose morphisms are morphisms of equivariant complex vector bundles that are $\R$-linear in fibers.
\end{defi}

\begin{defi} [Def. \ref{SKVS-real}]
For a topological toric manifold $X$, the category $\widetilde{\SKVS_{X}}$ is the category having the same objects as $\SKVS_{X}$. 
    A morphism $f:(E,\{E^{i}(\mu)\})\to (F,\{F^{i}(\mu)\})$ in $\widetilde{\SKVS_{X}}$ is an $\R$-linear map $f:E\to F$ such that $f(E^{i}(\mu))\subset F^{i}(\mu)$ for all $i\in \Si^{(1)}$ and $\mu\in \MR$. $f$ is an isomorphism if $E\to F$ is an isomorphism of real vector spaces. $f$ is a monomorphism if $E\to F$ is a monomorphism of real vector spaces. $f$ is an epimorphism if $E\to F$ is an epimorphism of real vector spaces and $f(E^{i}(\mu))=F^{i}(\mu)$ for all $i\in \Si^{(1)}$ and $\mu\in \MR$.
\end{defi}

The fourth main theorem states that:
\begin{theo} [Theorem \ref{main3}]
    The category $\widetilde{\SKVB_{X}}$ is equivalent to the category $\widetilde{\SKVS_{X}}$.
\end{theo}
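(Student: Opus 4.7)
My plan is to leverage the equivalence $\Phi\colon \SKVB_{X}\xrightarrow{\sim} \SKVS_{X}$ of Theorem \ref{main2}. Since $\widetilde{\SKVB_{X}}$ and $\widetilde{\SKVS_{X}}$ have the same objects as $\SKVB_{X}$ and $\SKVS_{X}$ respectively, I will define a functor $\widetilde{\Phi}\colon \widetilde{\SKVB_{X}} \to \widetilde{\SKVS_{X}}$ that agrees with $\Phi$ on objects, so essential surjectivity is immediate and the content of the theorem is the bijectivity of $\widetilde{\Phi}$ on the enlarged morphism sets.

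On a morphism $f\colon \ME\to \MF$ in $\widetilde{\SKVB_{X}}$, set $\widetilde{\Phi}(f)$ to be the restriction of $f$ to the fiber $E$ over a chosen base point in the open $\T$-orbit. To see this lies in $\widetilde{\SKVS_{X}}$, I would use the intrinsic description of $E^{i}(\mu)$ as the span of germs of equivariant sections of $\ME$ of weight at least $\mu$ along $\beta_{i}$; this description is functorial in equivariant bundle maps, without requiring $\C$-linearity. Faithfulness of $\widetilde{\Phi}$ then follows because an equivariant bundle map is determined by its value on any single fiber in the open orbit, hence by $\widetilde{\Phi}(f)$.

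Fullness is the main work. Given an $\R$-linear filtration-preserving $g_{0}\colon E\to F$, I must construct an $\R$-linear equivariant bundle map $g\colon \ME\to\MF$ restricting to $g_{0}$. Chart by chart, over each affine $U_{I}$ Theorem \ref{main2} provides equivariant trivializations identifying $\ME|_{U_{I}}$ and $\MF|_{U_{I}}$ with smooth $U_{I}$-families of the $\MR^{n}/\bp$-graded fibers $E=\bigoplus_{[\chi]} E_{[\chi]}$ and $F=\bigoplus_{[\chi']} F_{[\chi']}$. On the open orbit, equivariance forces $g(x,v)=x\cdot g_{0}(x^{-1}\cdot v)$; the chart-level extensions then glue automatically because each is uniquely determined by $g_{0}$ on the dense open orbit.

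The main obstacle is smoothness of this extension across the boundary strata of $U_{I}$. Filtration-preservation is precisely the condition that forces the $([\chi],[\chi'])$-component of $g_{0}$ to vanish whenever the character difference lies outside the admissible range governed by $\geq_{s}$, and to extend smoothly otherwise. In the $\C$-linear case this is handled in Theorem \ref{main2}; the upgrade to $\R$-linear $g_{0}$ should be cosmetic because one can decompose an $\R$-linear map between two $\MR$-weight spaces into finitely many pieces each subject to the same $\geq_{s}$-criterion, the point being that $\R$-linearity together with $\T$-equivariance constrains each such piece in an essentially parallel way. Verifying cleanly that filtration-preservation of an $\R$-linear $g_{0}$ really forces every such piece to extend smoothly across $U_{I}$ is where I expect the proof to do most of its work, though I anticipate the argument to mirror that of Theorem \ref{main2} with only notational adjustments.
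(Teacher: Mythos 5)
Your overall strategy---keep the object-level correspondence of Theorem \ref{main2} and re-examine only the enlarged morphism sets---is the same route the paper implicitly takes (it offers no separate argument for Theorem \ref{main3} beyond ``modify Theorem \ref{main2}''). But the step you single out as the main work and then predict will be ``cosmetic'' is exactly where the argument breaks, and the obstruction is not notational. Decompose the fiber map of an equivariant, $\R$-linear-in-fibers morphism over the open orbit into its $\C$-linear and $\C$-antilinear components on each pair of weight spaces $E_{[\chi]}\to F_{[\chi']}$. Equivariance forces the $\C$-linear component over $tx_{0}$ to vary by $\chi'(t)\chi(t)^{-1}$, but the $\C$-antilinear component to vary by $\chi'(t)\overline{\chi(t)}^{-1}$, where $\overline{\chi^{\alpha}}=\chi^{\overline{\alpha}}$ with $\overline{\alpha}=(b-c\sqrt{-1},-v)$ componentwise. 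So the two pieces are \emph{not} subject to the same $\geq_{s}$-criterion: by Lemma \ref{smext} the antilinear piece extends over $U_{I}$ iff $\langle \chi'-\overline{\chi},\beta_{i}\rangle\geq_{s}0$ for all $i\in I$, and this condition is neither implied by nor implies preservation of the filtrations $\{E^{i}(\mu)\}$, which only see $\langle\chi,\beta_{i}\rangle$ and $\langle\chi',\beta_{i}\rangle$.

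Concretely, take $n=m=1$ and $\beta_{1}=(1,1)$, so $U_{1}=\C$ with the standard $\CS$-action, and let $\ME$, $\MF$ be the trivial line bundles on which $g$ acts fiberwise by $g/|g|$ (weight $\chi=(0,1)$) and by $\overline{g}/|g|$ (weight $\chi'=(0,-1)$); both are smooth Klyachko line bundles. Fiberwise conjugation $(z,e)\mapsto(z,\overline{e})$ is a smooth equivariant bundle isomorphism $\ME\to\MF$ that is $\R$-linear in fibers, yet for $\mu=(0,1)$ one has $E_{[\chi]}\subset E^{1}(\mu)$ while $\langle\chi',\beta_{1}\rangle-\mu=(0,-2)\not\geq_{s}0$, so $F_{[\chi']}\not\subset F^{1}(\mu)$. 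Hence your $\widetilde{\Phi}(f)=f|_{x_{0}}$ does not even land in $\widetilde{\SKVS_{X}}$ (your proposed ``intrinsic description'' of $E^{i}(\mu)$ is functorial only for weight-preserving, i.e.\ $\C$-linear, maps); dually, one can produce filtration-preserving $\R$-linear maps whose antilinear part violates the conjugate criterion, so fullness fails as well. Any correct proof must track the antilinear components separately, e.g.\ by recording in addition the conjugate filtrations $\sum_{\langle\overline{\chi},\beta_{i}\rangle\geq_{s}\mu}E_{[\chi]}$, rather than treating the $\R$-linear case as a cosmetic variant of Theorem \ref{main2}. (The canonical sequence of \S5 does not detect this because the source weights of the antilinear blocks there are trivial, hence self-conjugate.)
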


In the companion paper \cite{CG} and future work, we plan to answer the following questions:
\begin{itemize}
    \item Is every topological (resp.~smooth) $\T$-equivariant vector bundle a topological (resp.~smooth) Klyachko vector bundle?
    \item How to compute cohomology groups/rings of topological toric manifolds with coefficients in Klyachko vector bundles using their poset data?
    \item How to compute Chern numbers of Klyachko vector bundles using their poset data?
\end{itemize}

\indent The paper is organized as follows: In §1, we review the construction of topological toric manifolds. In §2, we prove the orbit-cone correspondence for topological toric manifolds that generalizes the toric case. In §3,4, we prove the main theorems \ref{main}, \ref{main2} and \ref{main3}. In §5, we show that a short exact sequence of $(\CS)^{n}$-equivariant vector bundles constructed in theorem 6.5 of \cite{IFM12} is a short exact sequence of smooth Klyachko vector bundles and give another proof of the exactness of this sequence by theorem~\ref{main3}. In §6 we show that a holomorphic Klyachko vector bundle over a topological toric manifold with $(\CS)^{n}$-equivariant complex structure is exactly a toric vector bundle over a nonsingular complete toric variety. \\
$ $\\
\noindent
{\bf Notation.}
Throughout, we denote by $\T$ the complex algebraic torus $(\CS)^{n}$ where $n$ is the dimension of the topological toric manifold considered. 

\bigskip

\noindent
{\bf Acknowledgment.}  
I am grateful to Professor Tristan Hübsch for introducing me to the topic of torus manifolds in the study of mirror symmetry and to my advisor Professor Amin Gholampour for many helpful discussions when writing this paper and to my second advisor Professor Patrick Brosnan for proofreading my earlier draft. I would also like to thank Professor Mikiya Masuda and Professor Hiroaki Ishida for answering many questions about torus manifolds. 
\bigskip

\section{Background on topological toric manifolds}
Topological toric manifolds are generalizations of smooth toric varieties. Like toric varieties, they are determined by topological toric fans. Here we review the construction in \cite{IFM12}.

\begin{defi}
 Let $S$ be the ring of $2\times 2$ matrices of the form $\begin{bmatrix}
     b &0\\
     c &v
 \end{bmatrix}$ where $b,c\in \R,v\in \Z$.
Denote the set $\C\times \Z$  by $\MR$. We define a ring structure on $\MR$ via the bijection $\MR\to S$ sending $\mu=(b+\i c,v)$ to $\begin{bmatrix}
     b &0\\
     c &v
 \end{bmatrix}$, i.e. 
 \begin{align*}
  \mu_1+\mu_2&:=((b_1+b_2)+\sqrt{-1}(c_1+c_2),v_1+v_2)\\  
  \mu_1\mu_2&:=(b_1b_2+\sqrt{-1}(b_1c_2+c_1v_2),v_1v_2).
 \end{align*}
 for $\mu_i=(b_i+\sqrt{-1}c_i,v_i),i=1,2.$\\
 For $g\in \CS, \mu=(b+\i c,v)\in \MR$, define 
 \[
 g^\mu:=|g|^{b+\i c}(\frac{g}{|g|})^v,
 \]
then $(g^{\mu_1})^{\mu_2}=g^{\mu_2\mu_1}$.\\
The identity element $\1=(1,1)$ of $\MR$ corresponds to $\begin{pmatrix}
    1 &0\\
    0 &1
\end{pmatrix}$.
The ring $\MR^{n}$ plays the roles of the character group and the cocharacter group for topological toric manifolds.
\end{defi}
 
\begin{defi}
Let $G$ be an abelian topological group. A continuous (resp.\hspace{0.1cm}smooth) character of $G$ is a continuous (resp.\hspace{0.1cm}smooth) homomorphism $G\to \CS$. A continuous (resp.\hspace{0.1cm}smooth) cocharacter of $G$ is a continuous (resp.\hspace{0.1cm}smooth) homomorphism $\CS\to G$. 
\end{defi}
 
 For $\alpha=(\alpha^1,\dots,\alpha^n)\in \MR^n$ and $\beta=(\beta^1,\dots,\beta^n)\in \MR^n$, define smooth characters $\chi^\alpha\in \Hom((\CS)^n,\CS)$ and smooth cocharacters $\la_\beta\in \Hom(\CS,(\CS)^n)$ via
 \[
 \chi^\alpha(g_1,\dots,g_n):=\prod_{k=1}^n g_k^{\alpha^k},  \quad\la_\beta(g):=(g^{\beta^1},\dots,g^{\beta^n}).
 \]

 \begin{lemm} \label{allcharacters}
 All continuous characters (resp.~cocharacters) of $\T$ are of the form $\chi^{\alpha}$ (resp.~$\la_{\beta}$). In particular, every continuous character of $\T$ is smooth.    
 \end{lemm}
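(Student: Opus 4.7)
The plan is to reduce to the one-variable case and classify continuous homomorphisms $\CS\to\CS$ directly. A continuous character $\phi:\T=(\CS)^n\to\CS$ is determined by its restrictions $\phi_k:\CS\to\CS$ to the $n$ coordinate subgroups, since $\T$ is generated by these subgroups and $\CS$ is abelian, so $\phi(g_1,\dots,g_n)=\prod_k \phi_k(g_k)$. Dually, a continuous cocharacter $\CS\to\T$ is determined by its compositions with the $n$ coordinate projections. Hence it suffices to show every continuous group homomorphism $\phi:\CS\to\CS$ has the form $g\mapsto g^\mu$ for a unique $\mu=(b+\i c,v)\in\MR$; the general case then reassembles into $\chi^\alpha$ and $\la_\beta$ respectively, with $\alpha,\beta\in\MR^n$.

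For the one-variable case I would use the polar decomposition $\CS=\R_{>0}\times S^1$, writing $g=|g|\cdot(g/|g|)$ so that $\phi(g)=\phi(|g|)\,\phi(g/|g|)$. The image $\phi(S^1)$ is a compact connected subgroup of $\CS$, hence contained in the maximal compact subgroup $S^1$; continuous homomorphisms $S^1\to S^1$ are classically given by $z\mapsto z^v$ for a unique $v\in\Z$. For the $\R_{>0}$-factor, I would pass to the universal cover: the exponential $\exp:\C\to\CS$ is a covering of topological groups, and since $\R$ (identified with $\R_{>0}$ via $\log$) is simply connected, the homomorphism $\R\to\CS$ induced by $\phi|_{\R_{>0}}$ lifts uniquely to a continuous homomorphism $\R\to\C$ sending $0$ to $0$. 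Such a homomorphism is $\R$-linear by the standard Cauchy-equation-plus-continuity argument, hence multiplication by some $b+\i c\in\C$. Descending to $\CS$ yields $\phi|_{\R_{>0}}(r)=r^{b+\i c}$, and recombining gives
\[ \phi(g)=|g|^{b+\i c}(g/|g|)^v=g^{(b+\i c,v)}, \]
exactly the promised form with $\mu=(b+\i c,v)\in\MR$. Uniqueness of $\mu$ is clear since $(b,c,v)$ are recovered by evaluating $\phi$ on $\R_{>0}$ and on $S^1$. Smoothness is immediate from this formula, because $g\mapsto|g|$ and $g\mapsto g/|g|$ are smooth functions on $\CS$ and each power map is smooth on its domain.

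The only nontrivial ingredient is the lifting through the universal cover $\exp:\C\to\CS$, which is a standard application of covering-space theory for topological groups; the classification of continuous homomorphisms $\R\to\R$ and $S^1\to S^1$ is classical. I do not expect a serious obstacle: the proof simply repackages these well-known structure results into the $\MR$-indexed language used throughout \cite{IFM12}.
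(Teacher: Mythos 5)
Your proof is correct and follows essentially the same route as the paper: reduce to $n=1$ via the coordinate subgroups, split $\CS\simeq\R_{>0}\times S^1$, and classify the homomorphisms on each factor by a covering-space lifting plus the Cauchy-equation argument (the paper carries this out explicitly for the $S^1$ factor and merely asserts the $\R_{>0}$ case, whereas you do the reverse; you also make explicit the point, left implicit in the paper, that $\chi(S^1)$ lands in $S^1$ because it is a compact subgroup of $\CS$). The reduction for cocharacters and the smoothness conclusion match the paper as well.
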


\begin{proof}
Given a continuous character $\chi:\CS\to\CS$, because $\CS\simeq \RP\times S^{1}$ as an abelian topological group, the restriction $\chi_{\RP}$ of $\chi$ to $\RP$ and the restriction $\chi_{S^{1}}$ of $\chi$ to $S^{1}$ are also continuous homomorphisms. $\RP$ is isomorphic to $(\R,+)$ as an abelian topological group via $\log$, therefore $\chi_{\RP}(r)=r^{b+ci}$ for some $b,c\in \R$. Let $\pi:\R\to S^{1}$ be the covering map $\theta\mapsto e^{2\pi i\theta}$. Let $\phi:\R\to \R$ be the unique lifting of $\chi_{S^{1}}\circ \pi$ with $\phi(0)=0$ so that the following diagram is commutative:
\begin{center}
    \begin{tikzcd}
    \R \ar[r,"\phi"] \ar[d,"\pi"']&\R \ar[d,"\pi"]\\
    S^{1} \ar[r,"\chi_{S^{1}}"] &S^{1}
    \end{tikzcd}
\end{center}
We prove that $\phi$ is a homomorphism. Let $\theta_{1},\theta_{2}\in \R$. We have $$\chi_{S^{1}}(\pi(\theta_{1}+\theta_{2}))=\chi_{S^{1}}(\pi(\theta_{1}))\chi_{S^{1}}(\pi(\theta_{2})).$$ By commutativity of the diagram, this is equivalent to $$\pi(\phi(\theta_{1}+\theta_{2}))=\pi(\phi(\theta_{1})+\phi(\theta_{2})).$$ So $f(\theta_{1},\theta_{2}):=\phi(\theta_{1}+\theta_{2})-\phi(\theta_{1})-\phi(\theta_{2})\in \Z$. Since $f$ is continuous, $f$ must be constant. This constant must be $0$ because $\phi(0)=0$. Hence $\phi$ is a homomorphism. Let $v=\phi(1)$. It's easy to see that $\phi(n)=nv$ for $n\in \Q$. By continuity of $\phi$, we know $\phi(r)=rv$ for all $r\in \R$. By continuity of the above diagram, $v\in \Z$. Hence $\chi(z)=z^{\mu}$ where $\mu=(b+ci,v)\in \MR$. \\
In general, given a continuous character $\chi:(\CS)^{n}\to \CS$, its restriction $\chi_{i}$ to the $i$-th coordinate subspace of $(\CS)^{n}$ is $\chi^{\alpha^{i}}$ for some $\alpha^{i}\in \MR$. Hence $\chi=\chi^{\alpha}$ where $\alpha=(\alpha^1,\cdots,\alpha^n)$. \\
Notice that a continuous cocharacter of $\CS$ is the same as a continuous character of $\CS$, so we can prove the claim about continuous cocharacters similarly. 
\end{proof}

\begin{defi}
Define a bracket operation by 
 \[
 \langle \alpha,\beta\rangle:=\sum_{k=1}^n \alpha^k\beta^k\in \MR.
 \]
 Notice that in general $\langle \alpha,\beta\rangle\neq \langle \beta,\alpha\rangle$ because the matrix product is not commutative.\\ 
\indent For a family $\{\alpha_{i}\}_{i=1}^{n}$ of $n$ elements in $\MR^{n}$, define its dual to be a family $\{\beta_i\}_{i=1}^{n}$ such that 
$\langle \alpha_{i},\beta_{j}\rangle=\delta_{ij}\1$ for all $i,j$, which is equivalent to  
\begin{equation} \label{dual}
\begin{bmatrix} \alpha^1_1 & \alpha^2_1 & \dots &\alpha^n_1 \\
\alpha^1_2 & \alpha^2_2 & \dots &\alpha^n_2 \\
\vdots &\vdots & \ddots & \vdots\\
\alpha^1_n & \alpha^2_n & \dots &\alpha^n_n 
\end{bmatrix} 
\begin{bmatrix} \beta^1_1& \beta^1_2 & \dots &\beta^1_n\\
\beta^2_1& \beta^2_2 & \dots &\beta^2_n\\
\vdots &\vdots & \ddots & \vdots\\
\beta^n_1& \beta^n_2 & \dots &\beta^n_n
\end{bmatrix} =\begin{bmatrix} \1 & \0 &\dots &\0\\
\0&\1&\dots &\0\\
\vdots &\vdots & \ddots & \vdots\\
\0&\0&\dots &\1\end{bmatrix}
\end{equation} where $\alpha_j=(\alpha_j^1,\dots,\alpha_j^n),\beta_{j}=(\beta_j^1,\dots,\beta_j^n)^{T}$.\\
\indent For $\{\alpha_i\}_{i=1}^{n}$ and $\{\beta_i\}_{i=1}^{n}$ with $\alpha_i,\beta_i\in \MR^n$, we define group endomorphisms $\bigoplus_{i=1}^{n}\chi^{\alpha_{i}}$ and $\prod_{i=1}^{n}\la_{\beta_{i}}$ of $(\CS)^{n}$ by 
\begin{equation} 
\begin{split}
&(\bigoplus_{i=1}^n\chi^{\alpha_i})(g_1,\dots,g_n):=(\chi^{\alpha_1}(g_1,\dots,g_n),\dots, \chi^{\alpha_n}(g_1,\dots,g_n))\\
&(\prod_{i=1}^n\la_{\beta_i})(g_1,\dots,g_n):=\prod_{i=1}^n\la_{\beta_i}(g_i).
\end{split}
\end{equation}    
\end{defi}

\begin{lemm} [cf.~Lemma 2.4 of \cite{IFM12} ]
If $\{b_i\}_{i=1}^n$ and $\{v_i\}_{i=1}^n$ are bases of $\R^n$ and $\Z^n$ respectively, then $\{\beta_i\}_{i=1}^n$ has a dual set $\{\alpha_i\}_{i=1}^n$, i.e. $\langle \alpha_i,\beta_j\rangle=\delta_{ij}\1$ for any $i,j$.   
\end{lemm}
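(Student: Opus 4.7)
The plan is to reduce the defining condition $\langle \alpha_i, \beta_j\rangle = \delta_{ij}\1$ to three decoupled real matrix equations and solve them explicitly. Decompose each known entry as $\beta_j^k = (b_j^k + \i c_j^k,\, v_j^k)$ and each unknown entry as $\alpha_i^k = (a_i^k + \i \gamma_i^k,\, w_i^k)$. Assemble the real $n\times n$ matrices $B_r = [b_j^k]_{k,j}$, $C = [c_j^k]_{k,j}$, $V = [v_j^k]_{k,j}$ whose columns are the $b_j$, $c_j$, $v_j$, and $A_r = [a_i^k]_{i,k}$, $\Gamma = [\gamma_i^k]_{i,k}$, $W = [w_i^k]_{i,k}$ whose rows are the $a_i$, $\gamma_i$, $w_i$.

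Using the ring law $\mu_1\mu_2 = (b_1b_2 + \i(b_1c_2+c_1v_2),\, v_1v_2)$, expansion of $\sum_k \alpha_i^k\beta_j^k$ splits cleanly into a ``$b$-part'', a ``$c$-part'', and a ``$v$-part''. Setting the result equal to $\delta_{ij}\1$ yields the three decoupled scalar equations
\[
A_r B_r = I_n, \qquad W V = I_n, \qquad A_r C + \Gamma V = 0.
\]
I would then solve them in that order: since $\{b_i\}$ is an $\R$-basis of $\R^n$, $B_r$ is invertible over $\R$ and I take $A_r := B_r^{-1}$; since $\{v_i\}$ is a $\Z$-basis of $\Z^n$, $V$ lies in $GL_n(\Z)$ and I take $W := V^{-1}$; and since $V$ is invertible over $\R$, I take $\Gamma := -A_r C V^{-1}$, which is a real $n\times n$ matrix.

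The main (and essentially only) point requiring care is that the resulting $\alpha_i$ genuinely lies in $\MR^n = (\C\times\Z)^n$, i.e., that each $v$-component $w_i^k$ is an integer. This is exactly where the strong hypothesis that $\{v_i\}$ is a basis of $\Z^n$, and not merely of $\R^n$ or $\Q^n$, gets used: it forces $V^{-1}$ to have integer entries, so that $W \in GL_n(\Z)$. Equivalently, under the identification of $\MR$ with a ring of triangular $2\times 2$ matrices, the matrix of $\beta$'s becomes, after an interleaving permutation of rows and columns, block triangular with diagonal blocks $B_r$ and $V$; its real inverse is block triangular with diagonal blocks $B_r^{-1}$ and $V^{-1}$, and the integrality of $V^{-1}$ is exactly what keeps this inverse inside $M_n(\MR)$. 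Reading off the rows of the inverse gives the desired $\alpha_i$.
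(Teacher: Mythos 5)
Your proof is correct: expanding $\langle\alpha_i,\beta_j\rangle=\delta_{ij}\1$ via the ring law does decouple into $A_rB_r=I_n$, $WV=I_n$, $A_rC+\Gamma V=0$, and you correctly isolate the one delicate point, namely that $V\in GL_n(\Z)$ (from $\{v_i\}$ being a $\Z$-basis of $\Z^n$) is what makes $W=V^{-1}$ integral so that the $\alpha_i$ actually land in $\MR^n=(\C\times\Z)^n$. The paper itself gives no proof, deferring to Lemma 2.4 of \cite{IFM12}, and the argument there is essentially this same reduction to three real matrix equations, so your proposal matches the intended proof.
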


\begin{lemm}[cf.~Lemma 2.3 of \cite{IFM12}] \label{lem1}
 If $\{\alpha_i\}_{i=1}^n$ is dual to $\{\beta_i\}_{i=1}^n$, then the composition 
$$\big(\prod_{i=1}^n \la_{\beta_i}\big)\big(\bigoplus_{i=1}^n\chi^{\alpha_i}\big)\colon (\C^*)^n\to (\C^*)^n$$ 
is the identity,  in particular, both $\bigoplus_{i=1}^n \chi^{\alpha_i}$ and $\prod_{i=1}^n \la_{\beta_i}$ are automorphisms of $(\C^*)^n$.     
\end{lemm}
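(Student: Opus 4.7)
My plan is to verify the claim by a direct computation on a generic element $(g_1,\ldots,g_n)\in(\CS)^n$, and then convert the duality relation into the matrix identity that the computation requires.

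I would first unpack the definitions step by step. Applying $\bigoplus_{i}\chi^{\alpha_i}$ to $(g_1,\ldots,g_n)$ produces the tuple whose $i$-th coordinate is $h_i=\prod_k g_k^{\alpha_i^k}$; applying $\prod_i \la_{\beta_i}$ next yields, in the $j$-th coordinate, the expression $\prod_i h_i^{\beta_i^j}$. Expanding $h_i$ and using $(g^{\mu_1})^{\mu_2}=g^{\mu_2\mu_1}$, each factor becomes $(g_k^{\alpha_i^k})^{\beta_i^j}=g_k^{\beta_i^j\alpha_i^k}$; since $\CS$ is abelian, I can collect the factors over $k$ and rewrite the $j$-th coordinate as $\prod_k g_k^{N_{jk}}$ with $N_{jk}=\sum_i \beta_i^j\alpha_i^k$.

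It remains to show that $N_{jk}=\delta_{jk}\1$. Setting $A=(\alpha_i^k)$ and $B=(\beta_i^k)$ as the two matrices in \eqref{dual}, the duality condition $\langle \alpha_i,\beta_j\rangle=\delta_{ij}\1$ is exactly $AB=I$ in $M_n(\MR)$, while what I need is the reversed identity $BA=I$. To bridge this, I would use the ring embedding $\MR\hookrightarrow M_2(\R)$ built into the very definition of $\MR$; this induces $M_n(\MR)\hookrightarrow M_{2n}(\R)$, and in $M_{2n}(\R)$ a right inverse of a square matrix is automatically a left inverse. Hence $AB=I$ in $M_n(\MR)$ forces $BA=I$, so $N_{jk}=\delta_{jk}\1$ and the $j$-th coordinate of the composition reduces to $g_j^{\1}=g_j$. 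For the ``in particular'' clause, this shows $\bigoplus_i\chi^{\alpha_i}$ and $\prod_i\la_{\beta_i}$ are mutual inverses---the opposite composition being the identity follows either from the same matrix-ring argument or from a symmetric direct computation using $AB=I$ straight from duality---so both are automorphisms of $(\CS)^n$.

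The main obstacle is the non-commutativity of $\MR$: duality naturally supplies $AB=I$, but the composition in the specified order produces the transposed matrix product $BA$. The ring-theoretic step $M_n(\MR)\hookrightarrow M_{2n}(\R)$ is what lets field-level linear algebra promote a one-sided inverse to a two-sided one and close the loop; care must also be taken to track the order of scalar multiplications when invoking $(g^{\mu_1})^{\mu_2}=g^{\mu_2\mu_1}$, as this is precisely what forces $BA$ rather than $AB$ to appear.
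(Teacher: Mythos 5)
Your proof is correct. Note that the paper itself gives no proof of this lemma --- it is quoted from Lemma 2.3 of \cite{IFM12} --- so there is no in-text argument to compare against; on its own terms your computation checks out. The unpacking of the composition is right: the $j$-th coordinate of $\big(\prod_i\la_{\beta_i}\big)\big(\bigoplus_i\chi^{\alpha_i}\big)(g)$ is $\prod_k g_k^{N_{jk}}$ with $N_{jk}=\sum_i\beta_i^j\alpha_i^k=(BA)_{jk}$, and the order reversal forced by $(g^{\mu_1})^{\mu_2}=g^{\mu_2\mu_1}$ is exactly why $BA$ rather than $AB$ appears --- you have correctly identified this as the crux. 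Your resolution, passing through $M_n(\MR)\hookrightarrow M_{2n}(\R)$ to promote the right inverse $AB=\mathrm{I}$ to a two-sided inverse, is sound (and is insensitive to whether the identification $\MR\simeq S$ is taken as an isomorphism or anti-isomorphism of rings, since a one-sided inverse of a square matrix over $\R$ is two-sided in either reading); it is the same device the paper itself uses in the proof of Lemma~\ref{lispan}, where invertibility of $(\beta_j^i)_{ij}$ is argued as a $2n\times 2n$ real matrix. Your observation that the opposite composition $\big(\bigoplus_i\chi^{\alpha_i}\big)\big(\prod_i\la_{\beta_i}\big)=\mathrm{id}$ follows directly from $\langle\alpha_j,\beta_i\rangle=\delta_{ij}\1$ without any order reversal is also correct, and together the two identities give the ``in particular'' clause.
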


\begin{defi} \label{simcom}
An abstract simplicial complex $\Si$ is a collection of non-empty finite subsets of a set $S$ such that for every set $I\in \Si$ and $J\subset I$, we have $J\in \Si$. Let $\Si$ be an abstract simplicial complex. If $\Si$ contains only finitely many elements, then it is called a finite abstract simplicial complex. The dimension of $I\in \Si$ is $|I|-1$. If $|I|\leq n$ for all $I\in \Si$ and there exists $I_{0}\in \Si$ such that $|I_{0}|=n$, then $\Si$ is called an abstract simplicial complex of dimension $n-1$. An augmented abstract finite simplicial complex $\Si$ of dimension $n-1$ is a set $\Si'\bigcup \{\emptyset\}$ where $\Si'$ is an abstract finite simplicial complex of dimension $n-1$.
For $i\in \N$, denote by $\Si^{(i)}$ the set of elements $I\in \Si$ such that $|I|=i$. In particular, we denote the vertex set by $\Si^{(1)}$.
\end{defi}

\begin{defi} \label{top-fan}
Let $\Si$ be an augmented abstract finite simplicial complex of dimension $n-1$ and let 
\[
\beta\colon \Si^{(1)} \to \MR^n=\CZN
\]
where $\Si^{(1)}$ denotes the vertex set of $\Si$.  We abbreviate an element $\{i\}\in\Si^{(1)}$ as $i$ and $\beta(\{i\})$ as $\beta_i$ and express $\beta_i=(b_i+\sqrt{-1}c_i,v_i)\in\CZN$. Denote the cone spanned by $b_{i},i\in I$ by $\angle b_{I}$. Then the pair $(\Si,\beta)$ is called a \emph{(simplicial) topological fan} of dimension $n$ if the following are satisfied.
\begin{enumerate}
\item  $b_i$'s for $i\in I$ are linearly independent whenever $I\in \Si$, and $\angle \b_I\cap \angle \b_J=\angle \b_{I\cap J}$ for any $I,J\in \Si$.
(In short, the collection of cones $\angle b_I$ for all $I\in \Si$ is an ordinary simplicial fan although $\b_i$'s are not necessarily in $\Z^n$.) 
\item Each $\v_i$ is primitive and $v_i$'s for $i\in I$ are linearly independent (over $\R$) whenever $I\in \Si$.  
\end{enumerate}     
A topological fan $\De$ of dimension $n$ is \emph{complete} if $\bigcup_{I\in \Si}\angle \b_I=\R^n$ and \emph{non-singular} if the $\v_i$'s for $i\in I$ form a part of a $\Z$-basis of $\Z^n$ whenever $I\in\Si$.  
\end{defi}

\noindent 
{\bf Construction.}
Let $\De=(\Si,\beta)$ be a non-singular topological fan of dimension $n$. We take the vertex set $\Si^{(1)}$ as $[m]=\{1,2,\dots,m\}$. Because $\Si$ is a fan, for all $I\in \Si$, we have $I\subset [m]$. For $I\subset [m]$, we set $I^{c}:=\Si^{(1)}\backslash I$ and \[U(I):=\{(z_1,\dots,z_m)\in \C^m\mid z_i\not=0 \ \text{for $\forall i\notin I$}\}=\C^{I}\times (\CS)^{I^{c}}.\]
Note that $U(I)\cap U(J)=U(I\cap J)$ for any $I,J\in [m]$ and $U(I)\subset U(J)$ if and only if $I\subset J$.  We define 
\[
U(\Sigma):=\bigcup_{I\in \Si}U(I).
\]
Let 
\[
\la:(\C^*)^m\to (\C^*)^n
\]
be the homomorphism defined by
\[
\la(h_1,\dots,h_m):=\prod_{k=1}^m\la_{\beta_k}(h_k).
\]

\begin{lemm}
    $\la$ is surjective.
\end{lemm}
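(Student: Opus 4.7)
The plan is to reduce the surjectivity claim to a composition identity already established in Lemma~\ref{lem1}. First, since $\Si$ is an augmented abstract simplicial complex of dimension $n-1$, Definition~\ref{simcom} guarantees the existence of at least one $I_{0}\in\Si$ with $|I_{0}|=n$. By condition~(1) of Definition~\ref{top-fan}, the vectors $\{b_{i}\}_{i\in I_{0}}$ are linearly independent in $\R^{n}$ and hence form a basis; by condition~(2) together with the nonsingularity hypothesis on $\De$, the primitive vectors $\{v_{i}\}_{i\in I_{0}}$ form a $\Z$-basis of $\Z^{n}$. The lemma immediately preceding this statement then produces a dual family $\{\alpha_{i}\}_{i\in I_{0}}\subset\MR^{n}$ with $\langle\alpha_{i},\beta_{j}\rangle=\delta_{ij}\1$ for all $i,j\in I_{0}$.

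Second, I would introduce the subgroup inclusion $\iota\colon(\CS)^{n}\hookrightarrow(\CS)^{m}$ that places a given tuple $(g_{i})_{i\in I_{0}}$ into the $I_{0}$-indexed coordinates and $1$ into every coordinate $k\notin I_{0}$. Since $\la_{\beta_{k}}(1)$ is the identity element of $(\CS)^{n}$ for every $k$, the definition $\la(h_{1},\dots,h_{m})=\prod_{k=1}^{m}\la_{\beta_{k}}(h_{k})$ yields the factorization $\la\circ\iota=\prod_{i\in I_{0}}\la_{\beta_{i}}$. By Lemma~\ref{lem1}, the existence of the dual set $\{\alpha_{i}\}_{i\in I_{0}}$ forces this latter endomorphism of $(\CS)^{n}$ to be an automorphism, and in particular surjective. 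Surjectivity of $\la\circ\iota$ at once implies surjectivity of $\la$ itself.

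There is essentially no serious obstacle: the only non-formal input is the observation that the nonsingularity of $\De$ and the dimension hypothesis on $\Si$ together guarantee a top-dimensional simplex whose attached vectors satisfy the bases condition required by the duality lemma. Geometrically, the argument amounts to showing that $\la$ restricted to the coordinate subtorus indexed by $I_{0}$ is already an isomorphism onto $(\CS)^{n}$, so surjectivity of $\la$ holds for the trivial reason that it admits a section over this subtorus.
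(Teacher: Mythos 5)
Your proof is correct and follows essentially the same route as the paper: both restrict $\la$ to the coordinate subtorus indexed by a top-dimensional simplex $I_{0}\in\Si^{(n)}$ and reduce surjectivity to Lemma~\ref{lem1} via the dual family $\{\alpha^{I_{0}}_{i}\}_{i\in I_{0}}$, using the same witness $h$ (equal to the given tuple on $I_{0}$ and to $1$ elsewhere). The only cosmetic difference is that the paper post-composes $\la$ with the automorphism $\bigoplus_{i\in I_{0}}\chi^{\alpha^{I_{0}}_{i}}$ and checks surjectivity of the composite explicitly, whereas you pre-compose with the coordinate inclusion and quote directly from Lemma~\ref{lem1} that $\prod_{i\in I_{0}}\la_{\beta_{i}}$ is an automorphism.
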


\begin{proof}
Given $I\in \Si^{(n)}$, let $\{\alpha^I_i\}_{i\in I}$ be the dual to $\{\beta_i \}_{i\in I}$. We prove that $\bigoplus_{i\in I} \chi^{\alpha^I_i}\circ \la$ is surjective. By Lemma~\ref{lem1} $\bigoplus_{i\in I} \chi^{\alpha^I_i}$ is a bijection, hence $\la$ is surjective.\\
Surjectivity of $\bigoplus_{i\in I} \chi^{\alpha^I_i}\circ \la$: Given $(z_1,\dots,z_n)\in (\CS)^n=(\CS)^I$, let $h_k=z_k$ if $k\in I$ and $h_k=1$ otherwise, then 
\[\chi^{\alpha^I_i}\circ \la(h_1,\dots,h_m)=\prod_{k=1}^mh_k^{\langle \alpha,\beta_k\rangle}=z_i.\]
Hence $\bigoplus_{i\in I} \chi^{\alpha^I_i}\circ \la(h_1,\dots,h_m)=(z_1,\dots,z_n)$.
\end{proof}

\begin{lemm}[cf.~Lemma 4.1 of \cite{IFM12}] \label{lemm:kerl} 
\begin{equation} \label{kerl}
\Ker\la=\{(h_1,\dots,h_m)\in (\C^*)^m
\mid \prod_{k=1}^mh_k^{\langle \alpha,\beta_k\rangle}=1\quad \text{for any $\alpha\in \MR^n$}\}.
\end{equation}
Using $\{\alpha^I_i\}_{i\in I}$ for $I\in \Si^{(n)}$, which is dual to $\{\beta_i\}_{i\in I}$, we have  
\begin{equation} \label{aibk}
\Ker\la=\{(h_1,\dots,h_m)\in (\C^*)^m \mid h_i\prod_{k\notin I}h_k^{\langle \alpha^I_i,\beta_k\rangle}=1\quad \text{for any $i\in I$}\}.
\end{equation}
\end{lemm}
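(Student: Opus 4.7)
The plan is to prove both characterizations by exploiting the duality between continuous characters of $(\CS)^n$ and the cocharacter data $\beta$. For (\ref{kerl}) I would use Lemma~\ref{allcharacters}: an element $y\in(\CS)^n$ equals the identity if and only if $\chi^\alpha(y)=1$ for every $\alpha\in\MR^n$, since characters separate points of $(\CS)^n$ (and all continuous characters have the form $\chi^\alpha$). Applying this to $y=\la(h)$ reduces the kernel condition to a single family of scalar equations.

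Next I would unpack $\chi^\alpha(\la(h))$. Using $\la(h_1,\dots,h_m)=\prod_{k=1}^m\la_{\beta_k}(h_k)$ and the multiplicativity of $\chi^\alpha$, this becomes $\prod_{k=1}^m \chi^\alpha(\la_{\beta_k}(h_k))$. A direct calculation from the definitions $\chi^\alpha(g_1,\dots,g_n)=\prod_j g_j^{\alpha^j}$, $\la_{\beta_k}(g)=(g^{\beta_k^1},\dots,g^{\beta_k^n})$, together with the exponent law $(g^{\mu_1})^{\mu_2}=g^{\mu_2\mu_1}$ in $\MR$, yields $\chi^\alpha(\la_{\beta_k}(h_k))=h_k^{\langle\alpha,\beta_k\rangle}$. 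Assembling these factors gives (\ref{kerl}). The main subtlety—and essentially the only place where one must be careful—is bookkeeping in the noncommutative ring $\MR$: the ordering in the product $\alpha^j\beta_k^j$ (and hence the definition $\langle\alpha,\beta_k\rangle=\sum_j\alpha^j\beta_k^j$, not the reverse) must match the order produced by the identity $(g^{\mu_1})^{\mu_2}=g^{\mu_2\mu_1}$. This is the only real obstacle.

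For (\ref{aibk}) I would show the two descriptions of $\Ker\la$ are equivalent by specializing and dualizing. Fix $I\in\Si^{(n)}$ and its dual family $\{\alpha^I_i\}_{i\in I}$. Substituting $\alpha=\alpha^I_i$ into (\ref{kerl}) and using $\langle\alpha^I_i,\beta_k\rangle=\delta_{ik}\1$ for $k\in I$, the product collapses to $h_i\prod_{k\notin I}h_k^{\langle\alpha^I_i,\beta_k\rangle}$, establishing $\subseteq$. For the reverse inclusion, I would observe that by Lemma~\ref{lem1} the map $\bigoplus_{i\in I}\chi^{\alpha^I_i}\colon(\CS)^n\to(\CS)^n$ is a group automorphism, so $\la(h)=1$ as soon as $\chi^{\alpha^I_i}(\la(h))=1$ for every $i\in I$. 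Thus the finite system indexed by $i\in I$ already implies the (a priori stronger) infinite system indexed by $\alpha\in\MR^n$, and the two descriptions agree.
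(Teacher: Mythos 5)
Your argument is correct: the separation-of-points reduction, the computation $\chi^{\alpha}(\la(h))=\prod_k h_k^{\langle\alpha,\beta_k\rangle}$ via $(g^{\mu_1})^{\mu_2}=g^{\mu_2\mu_1}$ and $g^{\mu_1+\mu_2}=g^{\mu_1}g^{\mu_2}$, and the use of Lemma~\ref{lem1} to pass from the finite system indexed by $i\in I$ back to the full kernel condition are exactly the standard argument. The paper itself gives no proof, deferring to Lemma 4.1 of \cite{IFM12}, and your proposal faithfully reproduces that argument, including the correct handling of the noncommutative ordering in $\langle\alpha,\beta_k\rangle$.
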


\begin{defi} \label{ttm}
 We call 
\[X(\De):= U(\Si)/\Ker \la \] equipped with the quotient topology together with the natural action of $\T=(\CS)^n\simeq (\CS)^m/\Ker \la$
the topological toric manifold associated with $\De$. \\
Denote $U(I)/\Ker\la$ by $U_{I}$ and the representation space of $\bigoplus_{i\in I}\chi^{\alpha^{I}_{i}}$ by $\bigoplus_{i\in I}V(\chi^{\alpha^{I}_{i}})\simeq \C^{I}$. The following theorem confirms that $X(\De)$ is a smooth manifold.   
\end{defi}

\begin{theo}[cf.~Lemma 5.1 and Lemma 5.2 of \cite{IFM12}] \label{chart}
   $X(\De)$ is a smooth manifold with a smooth $\T$-action whose equivariant local charts are the maps \[\varphi_{I}:U_{I}\to \bigoplus_{i\in I}V(\chi^{\alpha^{I}_{i}}) \]
defined via 
\[\varphi_{I}(z_1,\dots,z_m):= (\prod_{k=1}^{m}z_{k}^{\langle \alpha^{I}_{i},\beta_{k}\rangle})_{i\in I} \] for all $I\in \Si^{(n)}$ and whose $j$-th component of transition function $\varphi_{J}\varphi_{I}^{-1}:\varphi_{I}(U_{I}\bigcap U_{J})\to \varphi_{J}(U_{J})$ is given by 
\[\prod_{i\in I}w_{i}^{\langle \alpha^{J}_{j},\beta_{i}\rangle}. \] for all $I,J\in \Si^{(n)}$.
\end{theo}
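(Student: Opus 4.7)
The plan is to construct the smooth atlas chart-by-chart by analyzing each $\varphi_I$ directly, using the duality $\langle\alpha^I_i,\beta_j\rangle=\delta_{ij}\1$ for $i,j\in I$ and the characterization of $\Ker\la$ from Lemma \ref{lemm:kerl}. First I would verify that $\varphi_I$ descends to $U_I=U(I)/\Ker\la$ and is bijective onto $\C^I$. Writing $\varphi_I(h\cdot z)_i=\prod_k(h_kz_k)^{\langle\alpha^I_i,\beta_k\rangle}$ and using the multiplicativity $(ab)^\mu=a^\mu b^\mu$ (which follows from the definition $g^\mu=|g|^{b+\i c}(g/|g|)^v$), this splits as $\bigl(\prod_k h_k^{\langle\alpha^I_i,\beta_k\rangle}\bigr)\,\varphi_I(z)_i$; by \eqref{kerl} the first factor is $1$ for every $h\in\Ker\la$. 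For surjectivity, the lift $z_i=w_i$ ($i\in I$), $z_k=1$ ($k\notin I$) lies in $U(I)$ and returns $(w_i)_{i\in I}$ by duality. Injectivity modulo $\Ker\la$ follows from matching zero patterns of $z$ and $z'$ (clear since the $k\notin I$ factors lie in $\CS$) and then defining $h_k=z_k/z'_k$ on non-vanishing coordinates and using \eqref{aibk} to pick $h_i$ on the vanishing coordinates of $I$, where $h_i\cdot 0=0$ keeps $h\cdot z'=z$.

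Equivariance follows from the identity $\chi^{\alpha^I_i}\circ\la_{\beta_k}(g)=g^{\langle\alpha^I_i,\beta_k\rangle}$: for $t=[\la(g)]\in\T$ one computes $\varphi_I(t\cdot[z])_i=\chi^{\alpha^I_i}(\la(g))\cdot\varphi_I([z])_i$, so $\varphi_I$ identifies $U_I$ equivariantly with $\bigoplus_{i\in I}V(\chi^{\alpha^I_i})$. To obtain the transition formula on $\varphi_I(U_I\cap U_J)=\varphi_I(U(I\cap J)/\Ker\la)$, I would take the canonical lift $z_k=w_k$ for $k\in I$, $z_k=1$ for $k\notin I$; this lies in $U(I\cap J)$ because $w_k\in\CS$ for $k\in I\setminus J$ (forced by the zero pattern of the image of $\varphi_I$). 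Applying $\varphi_J$ to this lift yields the $j$-th component $\prod_{i\in I}w_i^{\langle\alpha^J_j,\beta_i\rangle}$, matching the claim.

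The hard part will be verifying that this transition is smooth at points where some $w_i$ vanish, i.e., for $i\in I\cap J$. The key observation is that for such $i$ one has $i,j\in J$, so the duality within $J$ forces $\langle\alpha^J_j,\beta_i\rangle=\delta_{ij}\1$; thus the $i\in I\cap J$ factors contribute either $w_j^{\1}=w_j$ (a smooth coordinate) or $w_i^{\0}=1$, each extending smoothly across $w_i=0$. The remaining factors $w_i^{\langle\alpha^J_j,\beta_i\rangle}$ for $i\in I\setminus J$ have $w_i\in\CS$, on which $w\mapsto w^\mu$ is smooth in real variables for any $\mu\in\MR$ by the explicit formula. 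Consequently the transition is a smooth coordinate times a smooth nonvanishing function, hence globally smooth on $\varphi_I(U_I\cap U_J)$. This establishes the smooth atlas, and since equivariance in each chart has been shown above, $X(\De)$ is a smooth $\T$-manifold with the claimed equivariant charts and transition formulas.
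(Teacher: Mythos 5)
Your chart-by-chart analysis is essentially correct and, since the paper states Theorem \ref{chart} only by citation to \cite{IFM12} without reproducing a proof, it is the natural argument: well-definedness of $\varphi_I$ on the quotient via \eqref{kerl}, bijectivity via the canonical lift and \eqref{aibk}, equivariance via $\chi^{\alpha}(\la_\beta(g))=g^{\langle\alpha,\beta\rangle}$, and the key smoothness point that $\langle\alpha^J_j,\beta_i\rangle=\delta_{ij}\1$ for $i\in I\cap J$, $j\in J$, so the only factors whose base can vanish are the coordinate $w_j$ itself or the constant $1$, while the factors indexed by $I\setminus J$ are nonvanishing and $w\mapsto w^\mu$ is real-smooth on $\CS$. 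All of that checks out.

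The gap is that you never use condition (1) of Definition \ref{top-fan} ($b_i$'s linearly independent and $\angle b_I\cap\angle b_J=\angle b_{I\cap J}$), and consequently you have not proved that $X(\De)$ is Hausdorff, which is part of the assertion that it is a smooth manifold. An atlas of bijective charts onto open subsets of $\C^I$ with smooth transitions does not by itself yield Hausdorffness of the quotient topology: take $n=1$, $\Si=\{\emptyset,\{1\},\{2\}\}$, $\beta_1=\beta_2=(1,1)$. Every step of your argument goes through verbatim (duality, lifts, transition formula $w\mapsto w$ on $\CS$), yet $U(\Si)/\Ker\la$ is the line with doubled origin. This is exactly the content of Lemma 5.1 of \cite{IFM12}: one must show that $\Ker\la$ acts freely and \emph{properly} on $U(\Si)$ (equivalently, that points of $U_I\setminus U_J$ and $U_J\setminus U_I$ can be separated), and the properness is where $\angle b_I\cap\angle b_J=\angle b_{I\cap J}$ and the linear independence of the $b_i$'s are used, via the moment-type map $z\mapsto\sum_k\log|z_k|\,b_k$ on the nonzero coordinates. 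You should either supply this properness argument or explicitly invoke Lemma 5.1 of \cite{IFM12} for the Hausdorff property; the remainder of your proof then correctly establishes the smooth structure, the equivariant charts, and the stated transition functions.
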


\noindent {\bf Remark}: If $\De$ is such that $\beta_{i}=(v_{i},v_{i})\in \Z^{n}\times \Z^{n}$ for each $i\in \Si^{(1)}$, then $X(\De)$ is just the toric variety associated with the fan $\Si'$ whose cones are all $\sigma_{I}=\angle(\{v_{i}|i\in I\})$.

\bigskip


\section{Orbit-Cone correspondence}
\begin{defi} \label{orbit}
    Let $\De=(\Si,\beta)$ be a non-singular topological fan of dimension $n$.
For each $I\in \Si$, we call $$\gamma_{I}=[0^{I}\times 1^{\Si^{(1)}\backslash I}]\in U(I)/\Ker\la$$ the distinguished point corresponding to $I$ and we denote its $\T$-orbit by $$\mathcal{O}(I):=\T \cdot \gamma_{I}=(0^{I}\times (\C^{*})^{\Si^{(1)}\backslash I})/\Ker\la$$
and the closure of $\MO(I)$ in $X(\De)$ by 
$$V(I):=\overline{\MO(I)}=(0^{I}\times \C^{\Si^{(1)}\backslash I})/\Ker\la\subset X(\De).$$
\end{defi}
The following theorem is a direct generalization of Theorem 3.2.6 of \cite{CLS}.
\begin{theo} \label{orbit-cone}
$ $\\
    \begin{enumerate}
        \item There is a bijective correspondence 
            \begin{align*}
            \Si &\longleftrightarrow \{\T\text{-orbits in } X(\De)\}\\
            I &\mapsto \MO(I)
            \end{align*}
        \item For each $J\in \Si^{(j)}, dim_{\C}\MO(J)=n-j$.
        \item \[U_{I}=\bigcup_{J\leq I}\MO(J).\]
        \item $J\leq I$ iff $\MO(I)\subset V(J)$ and \[V(J)=\bigcup_{J\leq I}\MO(I).\]
    \end{enumerate}
\end{theo}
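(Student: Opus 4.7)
The plan is to introduce a single bookkeeping device, the \emph{zero pattern}, and let all four statements fall out of it in parallel to the toric case (CLS, Theorem 3.2.6). For a point $[z]\in X(\De)=U(\Si)/\Ker\la$ represented by $(z_1,\dots,z_m)\in U(\Si)$, set
\[
Z(z):=\{k\in[m]:z_k=0\}.
\]
Because $\Ker\la\subset (\CS)^m$ acts coordinatewise by \emph{nonzero} scalars, $Z(z)$ depends only on the class $[z]$, and because $z\in U(I')$ for some $I'\in\Si$ we have $Z(z)\subset I'$, so $Z(z)\in\Si$ by the simplicial-complex axiom. This gives a well-defined map $Z\colon X(\De)\to\Si$ that is constant on $\T$-orbits and whose fibers are exactly the subsets $(\0^I\times(\CS)^{I^c})/\Ker\la=\MO(I)$ appearing in Definition~\ref{orbit}.

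For (1) and (3): transitivity of $\T$ on each fiber of $Z$ follows at once because $(\CS)^m$ acts transitively on $\0^I\times(\CS)^{I^c}$ (the $I^c$-block is free, the $I$-block acts trivially on zeros) and $\la\colon(\CS)^m\to\T$ is surjective by the previous lemma; hence $\MO(I)$ is a single $\T$-orbit. Surjectivity of $I\mapsto\MO(I)$ is immediate (any $[z]$ lies in $\MO(Z(z))$), and injectivity is immediate from the well-definedness of $Z$. Part (3) is then the tautology $U_I=U(I)/\Ker\la=\{[z]:Z(z)\subset I\}=\bigsqcup_{J\leq I}\MO(J)$.

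For (2): I would first assume $I\in\Si^{(j)}$ is contained in some $I'\in\Si^{(n)}$, so that $U_I\subset U_{I'}$ and the chart $\vph_{I'}\colon U_{I'}\xrightarrow{\sim}\bigoplus_{i\in I'}V(\chi^{\alpha^{I'}_i})\simeq\C^{I'}$ of Theorem~\ref{chart} is available. Under $\vph_{I'}$, the point $\gamma_I$ goes to the vector $p$ with $p_i=0$ for $i\in I$ and $p_j=1$ for $j\in I'\setminus I$, and the $\T$-action on $\C^{I'}$ is via the dual-basis characters $\chi^{\alpha^{I'}_i}$; hence the $\T$-orbit of $p$ is $\{z\in\C^{I'}:z_i=0\ \forall i\in I,\ z_j\in\CS\ \forall j\in I'\setminus I\}$, of complex dimension $|I'\setminus I|=n-j$. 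For an $I$ not contained in any maximal simplex, the same count drops out directly from $\MO(I)\cong(\CS)^{I^c}/\pi_{I^c}(\Ker\la)$ together with the dimension $\dim_\C\Ker\la=m-n$ (which is where I expect to spend the most care, see below).

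For (4): $V(J)=(\0^J\times\C^{J^c})/\Ker\la$ by definition, and intersecting with $U(\Si)$ leaves exactly those $[z]$ with $Z(z)\supset J$ and $Z(z)\in\Si$; invoking the bijection from (1), this is $\bigsqcup_{J\leq I}\MO(I)$, which gives the set equality and the equivalence $J\leq I\iff\MO(I)\subset V(J)$.

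\textbf{Main obstacle.} The only nonroutine step is the dimension count in (2): one must verify that $\Ker\la$, viewed as acting on $\0^I\times(\CS)^{I^c}$, factors through its projection $\pi_{I^c}(\Ker\la)\subset(\CS)^{I^c}$ and that this projection has the expected complex codimension $n-|I|$ in $(\CS)^{I^c}$. For $I$ extendable to an $I'\in\Si^{(n)}$ this is clean via the chart argument above, but when $I$ is not contained in any maximal simplex one needs the linear-independence part of Definition~\ref{top-fan}(1) (the cones $\angle b_I$ form an honest simplicial fan) to conclude that $\la|_{(\CS)^I}$ is injective, which forces $\dim\pi_{I^c}(\Ker\la)=\dim\Ker\la=m-n$ and yields $\dim_\C\MO(I)=(m-|I|)-(m-n)=n-|I|$. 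I expect this to be the only place where the proof uses the geometry of $\beta$ beyond the formal combinatorics of $\Si$.
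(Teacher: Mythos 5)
Your proposal is correct and follows essentially the same route as the paper: orbits are classified by their zero pattern, the dimension count in (2) is done in the chart $\vph_{I'}$ for a maximal simplex $I'\supset J$, and (3), (4) reduce to the combinatorics of zero sets. The only substantive difference is that you additionally treat the case of a simplex not contained in any $I'\in\Si^{(n)}$ via the projection $\pi_{I^c}(\Ker\la)$, a case the paper silently assumes away (its argument for (2) simply picks $I\in\Si^{(n)}$ with $J\leq I$).
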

\begin{proof}
$ $\\
    \begin{enumerate}
        \item Let $\mathcal{O}$ be a $\T$-orbit in $X(\Delta)$ and $\gamma=[\gamma^{1},\dots,\gamma^{m}]\in \MO$. Let $I=\{i\in \Sigma(1)| \gamma^{i}=0\},$ then $\MO\bigcap \MO(I)\neq \emptyset$. Hence $\MO=\MO(I)$.
        \item Given $J\in \Sigma^{(j)}$, let $I\in \Sigma^{(n)}$ such that $J\leq I$. Then $\vph_{I}(\MO(J))=0^{J}\times (\C^{*})^{I\backslash J}$, hence $dim_{\C}\MO(J)=n-j$.
        \item follows from (a).
        \item If $J\leq I$, then by definition $\MO(I)\subset V(J)$. Conversely if $\MO(I)\subset V(J)$, then $\Sigma(1)\backslash I\subset \Sigma(1)\backslash J$, i.e. $J\leq I$.
    \end{enumerate}
\end{proof}

\section{Topological Klyachko vector bundles}
\begin{defi}
A topological Klyachko vector bundle of rank $r$ over an $n$-dimensional topological toric manifold $X(\De)$ with a $\T$-equivariant complex structure associated with a topological toric fan $\De=(\Si,\beta)$ is a $\T$-equivariant topological complex vector bundle $\ME\to X(\De)$ whose restriction $\left. \ME \right|_{U_{I}}$ is equivariantly homeomorphic to $U_{I}\times \C^{r}$ for all $I\in \Si$ and whose action map $\T\times \ME\to \ME$ is continuous.
\end{defi}

\begin{lemm} \label{ext1}
A continuous character $\chi^{\alpha}:\CS\to \CS$ where $\alpha=(b+c\sqrt{-1},v)\in \MR$ can be continuously extended to $\C$ iff $b>0$ or $b=c=v=0$.  
\end{lemm}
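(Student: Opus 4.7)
The plan is to write $g = r e^{i\theta}$ with $r = |g| > 0$ and $\theta \in \R$, so that the definition unfolds to
\[
\chi^{\alpha}(g) = r^{b+ic} e^{iv\theta} = r^{b}\, e^{i c \log r}\, e^{i v \theta},
\]
and then analyze the behavior as $g\to 0$, i.e.\ as $r\to 0^{+}$, along various directions. The modulus is $|\chi^{\alpha}(g)| = r^{b}$, which immediately splits the analysis into three cases according to the sign of $b$.

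For the ``if'' direction, first handle $b = c = v = 0$ trivially: $\chi^{\alpha} \equiv 1$ is constant on $\CS$, so it extends by $1$ at $0$. Next, for $b > 0$, I would observe that $|\chi^{\alpha}(g)| = r^{b} \to 0$ as $g \to 0$, and since $|e^{ic\log r}|=1$ and $|e^{iv\theta}|=1$, we have $|\chi^{\alpha}(g)| \to 0$ uniformly in $\theta$. Hence setting $\chi^{\alpha}(0) := 0$ gives a continuous extension to $\C$.

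For the ``only if'' direction, I need to rule out all remaining cases. If $b < 0$, then $|\chi^{\alpha}(g)| = r^{b} \to \infty$ as $g \to 0$, so no continuous (in particular, bounded near $0$) extension exists. The real obstacle is the $b = 0$ case with $(c,v) \ne (0,0)$, where $|\chi^{\alpha}(g)| \equiv 1$ and the failure of continuity is about the argument, not the modulus. Assume toward contradiction that a continuous extension $\widetilde{\chi}:\C\to\C$ exists, with $w := \widetilde{\chi}(0)$, necessarily of modulus $1$. I will exhibit two sequences $g_{n}\to 0$ with distinct limits of $\chi^{\alpha}(g_{n})$. If $c\ne 0$: take $g_{n} = r_{n}$ real positive with $r_{n}\to 0$ and choose $r_{n}$ so that $c\log r_{n}$ runs through any two residues mod $2\pi$ that are inequivalent (e.g.\ $r_{n} = e^{-2\pi n/c}$ vs.\ $r_{n} = e^{-(2\pi n + \pi)/c}$), giving two distinct limits on the unit circle. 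If $c = 0$ but $v \ne 0$: take $g_{n} = (1/n)e^{i\theta}$ for two different choices of $\theta$ with $e^{iv\theta}$ distinct (e.g.\ $\theta = 0$ vs.\ $\theta = \pi/v$), yielding distinct limits along the two rays. Either contradiction completes the proof.

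The main obstacle is the $b=0$ case: one must carefully extract two sequences whose values of $\chi^{\alpha}$ limit to distinct points of $S^{1}$, treating the subcases $c\ne 0$ (oscillation in the radial direction via $e^{ic\log r}$) and $c=0$, $v\ne 0$ (path-dependence in the angular direction via $e^{iv\theta}$) separately, since they reflect genuinely different mechanisms for discontinuity at the origin.
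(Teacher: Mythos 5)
Your proof is correct and follows essentially the same approach as the paper: split on the sign of $b$, use the modulus $r^{b}$ to settle $b>0$ and $b<0$, and in the boundary case $b=0$ exhibit paths into the origin along which $\chi^{\alpha}$ has distinct limits (the paper uses a single oscillating radial path for $c\ne 0$ and a spiral $g(t)=e^{it}/t$ for $v\ne 0$, where you use pairs of sequences/rays, but the mechanism is identical). One cosmetic point: your sample sequence $r_{n}=e^{-2\pi n/c}$ only tends to $0$ when $c>0$, so replace $c$ by $|c|$ there (or take $n\to-\infty$); the surrounding argument already covers this.
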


\begin{proof}
In this proof, I will use $i$ for $\sqrt{-1}$. 
Given $\chi^{\alpha}:\CS\to \CS$ with $\alpha=(b+ci,v)$ and $g=\rho e^{\theta i}\in \CS$ where $\rho\in \RP$, we have $\chi^{\alpha}(g)=\rho^{b+ci}e^{i\theta v}$. If $b>0$, then $|\chi^{\alpha}(g)|=\rho^{b}\to 0$ as $g\to 0$, therefore $\chi^{\alpha}(g)\to 0$ as $g\to 0$. If $b<0$, then $|\chi^{\alpha}(g)|=\rho^{b}\to +\infty$ as $g\to 0$, therefore $\chi^{\alpha}(g)$ diverges as $g\to 0$. If $b=0$, suppose $c\neq 0$, then let $g(t)=t, t\in (0,+\infty)$, we have $\chi^{\alpha}(g(t))=e^{ic\ln{t}}$ diverges as $t\to 0^{+}$. Therefore  $c=0$ when $\chi^{\alpha}$ can be continuously extended to $\C$ and $b=0$. When $b=c=0$, let $g(t)=\frac{e^{ti}}{t}, t\in (0,+\infty)$, then $\chi^{\alpha}(g(t))$ converges as $t\to +\infty$ if and only if $v=0$. Therefore when $b=0$, $\chi^{\alpha}$ can be continuously extended to $\C$ if and only if $c=v=0$. 
\end{proof}

\begin{defi} \label{geqc}
We can define a partial order $\geq_{c}$ on $\MR$ as follows: For $\alpha=(b+c\sqrt{-1},v)\in \MR$, we say $\alpha \geq_{c} 0=(0+0\sqrt{-1},0)$ if $b>0$ or $b=c=v=0$. For $\alpha,\beta\in \MR$, we say $\alpha\geq_{c}\beta$ if $\alpha-\beta\geq_{c} 0$.
\end{defi}

\begin{prop}
    $\geq_{c}$ is a partial order.
\end{prop}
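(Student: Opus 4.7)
The plan is to verify the three axioms of a partial order: reflexivity, antisymmetry, and transitivity. Since $\alpha \geq_{c} \beta$ is defined in terms of $\alpha - \beta \geq_{c} 0$, everything reduces to showing that the subset
\[
P := \{\mu = (b + c\sqrt{-1},v) \in \MR \mid b>0\ \text{or}\ b=c=v=0\}
\]
is a ``positive cone'' in the additive group $\MR$, in the sense that (i) $0 \in P$, (ii) $P + P \subset P$, and (iii) $P \cap (-P) = \{0\}$. These three facts correspond exactly to reflexivity, transitivity, and antisymmetry.

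First, I would note that reflexivity is immediate: $\alpha - \alpha = 0 = (0 + 0\sqrt{-1}, 0)$ satisfies $b = c = v = 0$, so $\alpha \geq_{c} \alpha$. For antisymmetry, suppose $\alpha \geq_{c} \beta$ and $\beta \geq_{c} \alpha$; setting $\mu = \alpha - \beta = (b + c\sqrt{-1},v)$, both $\mu \in P$ and $-\mu = (-b - c\sqrt{-1}, -v) \in P$. If $b > 0$, then the real part of $-\mu$ is $-b < 0$, contradicting $-\mu \in P$; hence $b = c = v = 0$, so $\alpha = \beta$.

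For transitivity, suppose $\alpha \geq_{c} \beta$ and $\beta \geq_{c} \gamma$, so that $\mu_1 := \alpha - \beta$ and $\mu_2 := \beta - \gamma$ both lie in $P$. Writing $\mu_j = (b_j + c_j\sqrt{-1}, v_j)$ and using the definition of addition in $\MR$, the real part of $\mu_1 + \mu_2$ is $b_1 + b_2$. If either $\mu_j$ equals $0$, then $\mu_1 + \mu_2$ equals the other summand, which is in $P$ by assumption. Otherwise both $b_1 > 0$ and $b_2 > 0$, so $b_1 + b_2 > 0$, whence $\mu_1 + \mu_2 \in P$. Thus $\alpha - \gamma = \mu_1 + \mu_2 \in P$, i.e.\ $\alpha \geq_{c} \gamma$.

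None of this is hard: the proof is essentially a check that the sign of the real part of the first coordinate behaves well under addition, together with the observation that the only element with nonpositive real part that lies in $P$ is $0$ itself. The only mild subtlety is to remember to handle the edge case $b = c = v = 0$ separately in the transitivity step, since otherwise the strict inequality $b > 0$ on each summand is the sole ingredient. No properties of the (non-commutative) multiplication on $\MR$ are needed, only the additive structure.
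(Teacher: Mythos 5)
Your proof is correct and follows essentially the same route as the paper: reflexivity is immediate, antisymmetry follows from the sign of the real part $b$, and transitivity reduces to checking that the set of elements $\geq_{c}0$ is closed under addition. Your packaging of the argument as the verification that $P$ is a positive cone, with the two-case split for transitivity (one summand zero versus both real parts strictly positive), is just a slightly tidier organization of the paper's four-case analysis.
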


\begin{proof}
Reflexivity is clear. Anti-symmetry: suppose $\alpha_{i}=(b_{i}+c_{i}\sqrt{-1},v_{i})\in \MR,i=1,2$ and $\alpha_{1}\geq_{c} \alpha_{2}, \alpha_{2}\geq_{c}\alpha_{1},$ then $b_{1}-b_{2}\geq 0, b_{2}-b_{1}\geq 0$, therefore $b_{1}-b_{2}=c_{1}-c_{2}=v_{1}-v_{2}=0$. Transitivity: Suppose $\alpha_{i}=(b_{i}+c_{i}\sqrt{-1},v_{i})\in \MR,i=1,2,3$ and $\alpha_{1}\geq_{c}\alpha_{2},\alpha_{2}\geq_{c}\alpha_{3}$. There are four cases: 1) $b_{1}>b_{2},b_{2}>b_{3}$, 2) $b_{1}>b_{2},\alpha_{2}=\alpha_{3}$, 3) $\alpha_{1}=\alpha_{2},b_{2}>b_{3}$, 4) $\alpha_{1}=\alpha_{2}=\alpha_{3}$. In the first three cases, we have $b_{1}>b_{3}$, hence $\alpha_{1}\geq_{c}\alpha_{3}$. The case 4 obviously satisfies $\alpha_{1}\geq_{c}\alpha_{3}$.    
\end{proof}

\begin{lemm} \label{lispan}
  If $I\in \Si^{(n)}$ and $\alpha\in \MR^{n}$, then $\alpha=\Si_{i\in I}\la_{i}\alpha^{I}_{i}$ where $\la_{i}=\langle \alpha,\bi\rangle$ and $\{\alpha^{I}_{i}\}_{i\in I}$ is dual to $\{\beta_{i}\}_{i\in I}$.
\end{lemm}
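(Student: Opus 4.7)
The plan is to recover the expansion coordinate-by-coordinate from a suitable form of the duality relation (\ref{dual}). First, I would set $\lambda_i := \langle\alpha,\beta_i\rangle = \sum_{l=1}^n \alpha^l\beta_i^l \in \MR$ and $\alpha' := \sum_{i\in I}\lambda_i\alpha^I_i \in \MR^n$, and reduce $\alpha' = \alpha$ to a componentwise identity. Using only left-distributivity in the noncommutative ring $\MR$,
\[
(\alpha')^k \;=\; \sum_{i\in I}\Big(\sum_{l=1}^n \alpha^l\beta_i^l\Big)(\alpha^I_i)^k \;=\; \sum_{l=1}^n \alpha^l\Big(\sum_{i\in I}\beta_i^l(\alpha^I_i)^k\Big),
\]
so the lemma will follow once we establish the ``transposed'' duality identity
\begin{equation}\label{eq:trans-dual-sketch}
\sum_{i\in I}\beta_i^l(\alpha^I_i)^k \;=\; \delta_{lk}\,\mathbf{1}\qquad\text{for all }k,l\in\{1,\dots,n\}.
\end{equation}
This is \emph{not} a mere rewriting of (\ref{dual}): in matrix language (\ref{dual}) asserts $AB^T = I_n$ in $M_n(\MR)$ (with $A_{ij}=(\alpha^I_i)^j$, $B_{ij}=\beta_i^j$), whereas (\ref{eq:trans-dual-sketch}) asserts $B^TA = I_n$, and these are genuinely distinct because $\MR$ is noncommutative.

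To establish (\ref{eq:trans-dual-sketch}) I would exploit Lemma \ref{lem1}: since $\bigoplus_{i\in I}\chi^{\alpha^I_i}$ and $\prod_{i\in I}\la_{\beta_i}$ are mutually inverse automorphisms of $(\CS)^n$, \emph{both} possible compositions equal the identity. Writing out the composition $F := (\prod_{i\in I}\la_{\beta_i})\circ(\bigoplus_{i\in I}\chi^{\alpha^I_i})$ and carefully applying $(g^{\mu_1})^{\mu_2} = g^{\mu_2\mu_1}$ to keep the factors in the correct order, the $k$-th coordinate of $F(g_1,\dots,g_n)$ comes out to $\prod_{l=1}^n g_l^{\sum_{i\in I}\beta_i^k(\alpha^I_i)^l}$. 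Demanding that this equal $g_k$ for all $(g_1,\dots,g_n)\in(\CS)^n$ and invoking Lemma \ref{allcharacters} (a continuous character of $(\CS)^n$ determines its exponent vector in $\MR^n$ uniquely) forces $\sum_{i\in I}\beta_i^k(\alpha^I_i)^l = \delta_{kl}\mathbf{1}$, which is (\ref{eq:trans-dual-sketch}) after renaming $k\leftrightarrow l$.

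Plugging (\ref{eq:trans-dual-sketch}) back in gives $(\alpha')^k = \sum_l \alpha^l\delta_{lk}\mathbf{1} = \alpha^k$ for each $k$, completing the proof. I expect the main obstacle to be purely the bookkeeping of multiplication orders in $\MR$: the duality relation (\ref{dual}) alone encodes only a one-sided inverse in $M_n(\MR)$, so one genuinely needs the group-theoretic input of Lemma \ref{lem1} (equivalently, the fact that $M_n(\MR)\subset M_{2n}(\R)$ is Dedekind-finite) to flip the order and obtain (\ref{eq:trans-dual-sketch}).
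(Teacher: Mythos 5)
Your proof is correct, and you have isolated exactly the right subtlety: the relation (\ref{dual}) is a priori only the one-sided identity $AB^{T}=I_n$ in $M_n(\MR)$, whereas the expansion requires the transposed identity $B^{T}A=I_n$. Your way of obtaining it --- expanding the composition $(\prod_{i\in I}\la_{\beta_i})\circ(\bigoplus_{i\in I}\chi^{\alpha^I_i})$ from Lemma \ref{lem1} coordinatewise, using $(g^{\mu_1})^{\mu_2}=g^{\mu_2\mu_1}$, and matching exponents via Lemma \ref{allcharacters} --- is sound, and the componentwise computation of $\sum_{i\in I}\langle\alpha,\beta_i\rangle\,\alpha^I_i$ then closes by associativity and distributivity in $\MR$. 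The paper organizes the argument differently: it sets $\gamma=\alpha-\sum_{i\in I}\la_i\alpha^I_i$, checks $\langle\gamma,\beta_j\rangle=0$ for every $j\in I$ (which needs only the given one-sided duality), and concludes $\gamma=0$ because the matrix of the $\beta_j$'s, regarded as a $2n\times 2n$ real matrix via $\MR\simeq S\subset M_2(\R)$, is invertible --- a left inverse of a square real matrix being automatically two-sided. So both proofs ultimately rest on the same Dedekind-finiteness of $M_n(\MR)\subset M_{2n}(\R)$ that you named explicitly; the paper reaches it through elementary linear algebra over $\R$, while you reach it through the group-theoretic content of Lemma \ref{lem1}. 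Your version makes the order-of-multiplication bookkeeping and the need for the ``flip'' fully visible, at the cost of re-deriving the composition formula; the paper's version is terser but leaves that same flip implicit in the phrase ``invertible as a $2n\times 2n$ matrix.''
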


\begin{proof}
Let $\beta=\alpha-\Si_{i\in I}\la_{i}\alpha^{I}_{i}$, then $\langle \beta,\bi\rangle =0$ for all $i$.  The duality relation $\langle \alpha_{i},\beta_{j}\rangle=\delta_{ij}\1$ is equivalent to  
\begin{equation} 
\begin{bmatrix} \alpha^1_1 & \alpha^2_1 & \dots &\alpha^n_1 \\
\alpha^1_2 & \alpha^2_2 & \dots &\alpha^n_2 \\
\vdots &\vdots & \ddots & \vdots\\
\alpha^1_n & \alpha^2_n & \dots &\alpha^n_n 
\end{bmatrix} 
\begin{bmatrix} \beta^1_1& \beta^1_2 & \dots &\beta^1_n\\
\beta^2_1& \beta^2_2 & \dots &\beta^2_n\\
\vdots &\vdots & \ddots & \vdots\\
\beta^n_1& \beta^n_2 & \dots &\beta^n_n
\end{bmatrix} =\begin{bmatrix} \1 & \0 &\dots &\0\\
\0&\1&\dots &\0\\
\vdots &\vdots & \ddots & \vdots\\
\0&\0&\dots &\1\end{bmatrix}
\end{equation} where $\beta_{j}=(\beta_j^1,\dots,\beta_j^n)^{T}$, therefore the matrix $(\beta^{i}_{j})_{ij}$ is invertible as a $2n\times 2n$ matrix with coefficients in $\R$, hence $\beta=0$. 
\end{proof}

The following lemma is used to derive the compatibility condition that guarantees that we can extend the transition functions of the equivariant vector bundles from the open dense subset $\T\bigcap U_{I}\bigcap U_{J}$ to $U_{I}\bigcap U_{J}$.
\begin{lemm} \label{cext}
 If $I\in \Si$, then $\chi^{\alpha}:\T\to \CS, \alpha\in \MR^{n}$ can be continuously extended to $U_{I}$ iff $\langle \alpha,\bi\rangle \geq_{c} 0 \hspace{0.2cm}\forall i\in I$.  
\end{lemm}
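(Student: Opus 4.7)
The plan is to pull the extension problem back from the quotient $U_I=U(I)/\Ker\la$ to $U(I)=\C^I\times(\CS)^{I^c}$, where it decouples into one-variable questions handled by Lemma \ref{ext1}.

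First I would compute $\chi^\alpha\circ\la:(\CS)^m\to\CS$ explicitly. Unwinding the definitions of $\la$ and $\chi^\alpha$ and using the identity $(g^{\mu_1})^{\mu_2}=g^{\mu_2\mu_1}$, one obtains
\[
\chi^\alpha\bigl(\la(h_1,\dots,h_m)\bigr)=\prod_{k=1}^m h_k^{\langle\alpha,\beta_k\rangle}.
\]
The natural map $U(I)\to U_I$ is a quotient map, and $(\CS)^m$ is dense in $U(I)$. Therefore $\chi^\alpha$ admits a continuous extension to $U_I$ if and only if $\chi^\alpha\circ\la$ admits a continuous extension to $U(I)$: any continuous extension on $U(I)$ is automatically $\Ker\la$-invariant (by density of $(\CS)^m$ and continuity) and thus descends to $U_I$, while conversely, pulling back an extension on $U_I$ along the quotient map yields one on $U(I)$.

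Sufficiency then follows directly: if $\langle\alpha,\beta_i\rangle\geq_c 0$ for every $i\in I$, Lemma \ref{ext1} lets us extend each factor $h_i^{\langle\alpha,\beta_i\rangle}$ with $i\in I$ continuously from $\CS$ to $\C$, while the factors $h_k^{\langle\alpha,\beta_k\rangle}$ for $k\notin I$ already live on $\CS$ in $U(I)$; their product is therefore continuous on $\C^I\times(\CS)^{I^c}=U(I)$.

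For necessity, given a continuous extension on $U_I$ and a chosen $i_0\in I$, I would test it along the curve $z\mapsto(1,\dots,1,z,1,\dots,1)\in U(I)$ with $z\in\C$ placed in the $i_0$-th coordinate. Composing the quotient map $U(I)\to U_I$ with the extension and then restricting to this curve produces a continuous function $\C\to\C$ whose restriction to $\CS$ is $z\mapsto z^{\langle\alpha,\beta_{i_0}\rangle}$; Lemma \ref{ext1} then forces $\langle\alpha,\beta_{i_0}\rangle\geq_c 0$. The only real subtlety is the descent/lift step between $U(I)$ and $U_I$, which is where I would expect most of the care to go, relying on $U(I)\to U_I$ being a quotient map and on $\T$ being dense in $U_I$.
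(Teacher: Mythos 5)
Your proof is correct, and it reaches the same core reduction as the paper (a multiplicative splitting into one--variable characters handled by Lemma \ref{ext1}), but the bookkeeping is genuinely different. The paper works \emph{downstairs}: for $I\in\Si^{(n)}$ it writes $\alpha=\sum_{i\in I}\la_i\alpha^I_i$ with $\la_i=\langle\alpha,\beta_i\rangle$ via Lemma \ref{lispan}, so that in the chart $\varphi_I$ the character becomes $(z_i)_{i\in I}\mapsto\prod_{i\in I}z_i^{\la_i}$ on $\C^I$, and then treats a general $J\in\Si$ by embedding $U_J$ into $U_I$ for some top-dimensional $I\supseteq J$. You instead work \emph{upstairs} on $U(I)=\C^I\times(\CS)^{I^c}$ with the function $\prod_{k=1}^m h_k^{\langle\alpha,\beta_k\rangle}$ and descend along the quotient map $U(I)\to U_I$; the $\Ker\la$-invariance-by-density argument and the fact that the restriction of $U(\Si)\to X(\De)$ to the saturated open set $U(I)$ is still a quotient map are exactly the points where care is needed, and you have identified them correctly. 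Your route buys a uniform treatment of all $I\in\Si$ (no case split on $|I|=n$, no appeal to Lemma \ref{lispan} or to the existence of a top-dimensional cone containing $J$), at the cost of the descent argument; the paper's route avoids quotient-topology considerations entirely by using that $\varphi_I$ is already a homeomorphism onto $\C^I$. Your necessity test along the coordinate curve $z\mapsto(1,\dots,z,\dots,1)$ is the right way to isolate a single exponent, and is in fact also the implicit justification needed for the ``only if'' half of the paper's one-line claim that $\prod_i z_i^{\la_i}$ extends iff each $\la_i\geq_c 0$.
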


\begin{proof}
We first prove the case where $I\in \Si^{(n)}$. By lemma ~\ref{lispan}, we may assume that $\alpha=\Si_{i\in I}\la_{i}\alpha^{I}_{i}$ where $\la_{i}=\langle \alpha,\bi\rangle$. The embedding $\iota: \T\hookrightarrow U_{I}$ is given by $t\mapsto (\chi^{\alpha^{I}_{i}}(t))_{i\in I}$. The map $\chi^{\alpha}\circ \iota^{-1}:\iota(\T)\to \CS$ is given by $(z_{i})_{i\in I}\mapsto \prod_{i\in I}z_{i}^{\la_{i}}$. This map can be smoothly extended to $U_{I}$ iff $\la_{i}\geq_{c} 0 $ for all $i\in I$. \\
Next we prove the general case where $J\in \Si$ but $|J|\leq n$. Choose $I\in \Si^{(n)}$ that contains $J$. Then $U_{J}=\bigcap_{i\in I\backslash J}\{z_{i}\neq 0\}\subset U_{I}$. Hence $\chi^{\alpha}$ can be smoothly extended to $U_{J}$ iff $\langle \alpha,\bi\rangle \geq_{c} 0 \hspace{0.2cm}\forall i\in J$.
\end{proof}

\begin{defi} \label{bp}
For each $I\in \Si$, let $\T_{I}=((\CS)^{I}\times \1^{\Si^{(1)}\backslash I})/\Ker \la$ be the stabilizer of $\gamma_{I}$ and $\beta_{I}^{\perp}:=\{\alpha\in \MR^{n}| \langle \alpha,\beta_{i}\rangle=0 \hspace{0.2cm} \forall i\in I\}$.    
\end{defi}

The following lemma is used in Theorem ~\ref{main} to prove that the equivariant vector bundle we get is independent of the choice of extensions of $\varphi_{I}$. It generalizes Prop 2.1.1 of \cite{Kly90}.

\begin{lemm} \label{indext}
\leavevmode
Let $X=U_{I}$ be an affine chart of a topological toric manifold corresponding to a cone $I\in \Si^{(n)}$. 
\begin{enumerate}[label=\roman*),]
    \item Define a family of subspaces of a complex $\T$-module $E$ by 
    \begin{equation*}
       E^{i}(\mu)=\bigoplus_{\langle \chi,\beta_{i}\rangle \geq_{c}\mu} E_{\chi},i\in I,  
    \end{equation*}
   where $E_{\chi}\subset E$ is the isotypical component of the continuous character $\chi\in \MR^{n}$. Similarly define $\{F^{i}(\mu)|i\in I, \mu\in \MR\}$ for another complex $\T$-module F. Then the space of equivariant homomorphisms $\Hom_{\T}(X\times E,X\times F)$ is canonically isomorphic to the space of linear operators $\varphi:E\to F$ satisfying $\varphi(E^{i}(\mu))\subset F^{i}(\mu)$ for all $i\in I$.
    \item Two topological Klyachko vector bundles $X\times E$ and $X\times F$ are isomorphic if and only if the restrictions of the continuous representations are isomorphic $\left.E\right.|_{T_{I}}\simeq \left.F\right.|_{T_{I}}$.
\end{enumerate}   
\end{lemm}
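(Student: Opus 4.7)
The plan is to reduce part (i) to a weight-by-weight analysis of $\T$-equivariant continuous maps $U_I\to\Hom(E,F)$ and then apply Lemma~\ref{cext}. Concretely, an element of $\Hom_\T(U_I\times E,\,U_I\times F)$ is the same datum as a continuous $\T$-equivariant map $\psi\colon U_I\to\Hom(E,F)$, where $\T$ acts on $\Hom(E,F)$ by conjugation. Using the isotypical decompositions $E=\bigoplus_{\chi}E_\chi$ and $F=\bigoplus_{\chi'}F_{\chi'}$ (implicit in the very definition of $E^i(\mu)$), I split $\Hom(E,F)=\bigoplus_{\chi,\chi'}\Hom(E_\chi,F_{\chi'})$ and note that $\T$ acts on the $(\chi,\chi')$-summand through the single character $\chi'-\chi\in\MR^n$; the map $\psi$ decomposes accordingly as $\sum\psi_{\chi,\chi'}$.

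Since the open orbit $\T\cdot p$ is dense in $U_I$ (taking $p=\varphi_I^{-1}(1,\ldots,1)$), any such $\psi$ is determined by the value $\varphi:=\psi(p)\in\Hom(E,F)$; writing $\varphi=\sum\varphi_{\chi,\chi'}$ and using Lemma~\ref{lispan} to expand the characters $\chi'-\chi$ in the dual basis $\{\alpha^I_i\}$, the formula on the open orbit becomes
\[\psi(z)=\sum_{\chi,\chi'}\Big(\prod_{i\in I}z_i^{\langle\chi'-\chi,\beta_i\rangle}\Big)\varphi_{\chi,\chi'}\]
in chart coordinates. By Lemma~\ref{cext}, each monomial extends continuously from $(\CS)^I$ to $\C^I$ iff $\langle\chi'-\chi,\beta_i\rangle\geq_{c}0$ for every $i\in I$, and conversely any $\varphi$ whose components satisfy this condition defines a continuous equivariant $\psi$ by the same formula. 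To identify this weight-by-weight constraint with the filtration condition, I first verify that $\geq_{c}$ is preserved by addition (immediate from Def.~\ref{geqc}); then the implication ``$\varphi_{\chi,\chi'}\neq 0 \Rightarrow \langle\chi'-\chi,\beta_i\rangle\geq_{c}0$ for every $i\in I$'' becomes equivalent to ``$\varphi(E^i(\mu))\subset F^i(\mu)$ for all $i,\mu$'', where the nontrivial direction specializes $\mu=\langle\chi,\beta_i\rangle$.

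For part (ii), the key observation is that since $I\in\Si^{(n)}$ is maximal we have $\MO(I)=\{\gamma_I\}$, so $T_I=\T$ by Theorem~\ref{orbit-cone}, and the hypothesis $E|_{T_I}\simeq F|_{T_I}$ is simply an isomorphism of $\T$-representations $f\colon E\to F$. The ``only if'' direction is obtained by restricting a bundle isomorphism to the fiber over $\gamma_I$. For ``if'', such an $f$ automatically sends $E_\chi$ onto $F_\chi$ and hence $E^i(\mu)$ onto $F^i(\mu)$ for all $i,\mu$; applying part (i) to both $f$ and $f^{-1}$ then promotes $f$ to an isomorphism of trivial equivariant bundles $U_I\times E\to U_I\times F$.

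The main delicate point will be justifying the isotypical decomposition $E=\bigoplus_{\chi}E_\chi$ itself: a general continuous finite-dimensional representation of $\CS=\RP\times S^1$ need not be semisimple, since the $\RP$-factor can act through non-diagonalizable operators. I plan to handle this by restricting attention to representations that arise as fibers of Klyachko bundles, where the assumed equivariant trivialization on the chart $\C^I$ already diagonalizes the action of $\T$ through the characters $\chi^{\alpha^I_i}$ and hence pulls back a canonical isotypical decomposition onto $E$ and $F$.
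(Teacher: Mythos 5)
Your argument is correct and follows essentially the same route as the paper: decompose into weight spaces (the paper phrases this as reducing to a sum of line bundles), determine the equivariant morphism by its value on the dense orbit, and apply Lemma~\ref{cext} to translate continuous extendability into $\langle \chi_F-\chi_E,\beta_i\rangle \geq_{c}0$ for all $i\in I$, which you correctly identify with the filtration condition; part (ii) via the fixed point $\gamma_I$ is likewise the paper's argument. Your closing remark on semisimplicity of continuous $\T$-representations flags a point the paper passes over (it asserts the splitting into line bundles ``by definition''), and your resolution via the given equivariant trivialization is sound.
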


\begin{proof}
$i)$: All topological Klyachko vector bundles on $U_{I}$ decompose into a sum of line bundles by definition. Therefore we can assume that $dim E=dim F=1$. A bundle morphism $f:X\times E\to X\times F$ is a family of linear maps $\varphi_{x}:E\to F, x\in X$. The equivariance condition implies that for $\chi_{E},\chi_{F}\in \MR^{n},e\in E$, we have $\varphi_{tx}(te)=t\varphi_{x}(e)$, i.e. $\chi_{E}(t)\varphi_{tx}(e)=\chi_{F}(t)\varphi_{x}(e)$. Fix an arbitrary point $x_{0}$ in the open dense orbit and let $\varphi=\varphi_{x_{0}}$. Then $f(tx_{0},e)=(tx_{0},\chi^{-1}_{E}\chi_{F}(t)\varphi(e))$ for all $t\in \T$. Since $f$ is continuous on $X$, either $\varphi=0$ or $\chi^{-1}_{E}\chi_{F}$ can be continuously extended to $X$.  By lemma ~\ref{cext}, the latter condition is equivalent to 
    \[\langle \chi_{F},\beta_{i} \rangle \geq_{c} \langle \chi_{E},\beta_{i} \rangle, \forall i\in I. \] In both cases $\varphi$ satisfies $\varphi(E^{i}(\mu))\subset F^{i}(\mu)$.\\
    Conversely, if $\varphi:E\to F$ satisfies $\varphi(E^{i}(\mu))\subset F^{i}(\mu)$, then 
    \[f(tx_{0},e)=(tx_{0},\chi^{-1}_{E}\chi_{F}(t)\varphi(e))\] defines a continuous bundle morphism $f:X\times E\to X\times F$ by lemma ~\ref{cext}.\\
    $ii)$ If the bundles $\ME$ and $\mathcal{F}$ are isomorphic, then $\left.E \right.|_{\T}=\ME(\gamma_{I})\simeq \mathcal{F}(\gamma_{I})=\left.F\right.|_{\T}$. (Recall that $\gamma_{I}$ is the unique fixed point in $U_{I}$ for the top dimensional cone $I\in \Si^{(n)}$ and $\ME(\gamma_{I})$ denotes the fiber of $\ME$ over $\gamma_{I}$.) Conversely, if $\left.E \right.|_{\T}=\left.F\right.|_{\T}$, then the posets of subspaces $\{E^{i}(\mu)|i\in I, \mu\in \MR\}$ and $\{F^{i}(\mu)|i\in I, \mu\in \MR\}$ are isomorphic. By $i)$, the bundles $\ME$ and $\mathcal{F}$ are isomorphic.
\end{proof}

\begin{defi} \label{TKVS}
For a topological toric manifold $X$, the category $\TKVS_{X}$ is the category whose objects are vector spaces $E$ with a poset $P$ of subspaces $E^{i}(\mu)$ of $E$ indexed by $\mu\in \MR,i\in \Si^{(1)}$ which satisfy the following compatibility condition:\\
    For any $I\in \Si$, there exists a $\MR^{n}/\bp$-grading   \[E=\bigoplus_{[\chi]\in \MR^{n}/\bp}E_{[\chi]}\] for which
    \[E^{i}(\mu)=\sum_{\langle \chi,\beta_{i} \rangle {\geq_{c}}\mu}E_{[\chi]},\hspace{0.2cm} \forall i\in I.\]
    A morphism $f:(E,\{E^{i}(\mu)\})\to (F,\{F^{i}(\mu)\})$ in $\TKVS_{X}$ is a linear transformation $f:E\to F$ such that $f(E^{i}(\mu))\subset F^{i}(\mu)$ for all $i\in \Si^{(1)}$ and $\mu\in \MR$. $f$ is an isomorphism if $E\to F$ is an isomorphism of complex vector spaces. $f$ is a monomorphism if $E\to F$ is a monomorphism of complex vector spaces. $f$ is an epimorphism if $E\to F$ is an epimorphism of complex vector spaces and $f(E^{i}(\mu))=F^{i}(\mu)$ for all $i\in \Si^{(1)}$ and $\mu\in \MR$.
\end{defi}

\begin{theo} \label{main}
    The category $\TKVB_{X}$ of topological Klyachko vector bundles over the topological toric manifold $X=X(\De)$ is equivalent to the category $\TKVS_{X}$.
\end{theo}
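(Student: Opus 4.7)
The plan is to construct two functors $\Phi\colon \TKVB_X \to \TKVS_X$ and $\Psi\colon \TKVS_X \to \TKVB_X$ and show they are mutually quasi-inverse, modeling the argument on Klyachko's original proof but using Lemma \ref{cext} (continuous extendability of characters) in place of the usual algebraic extendability, and using Lemma \ref{indext} as the local building block.

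To construct $\Phi$, start with a topological Klyachko vector bundle $\ME$ and fix a reference vector space $E := \ME(\gamma_\emptyset)$ on the open $\T$-orbit. For each $I \in \Si^{(n)}$, the equivariant triviality gives an isomorphism $\ME|_{U_I} \simeq U_I \times E_I$ of equivariant bundles; the fiber over the fixed point $\gamma_I$ is a continuous $\T$-module, and Lemma \ref{allcharacters} decomposes it into isotypic components $E_I = \bigoplus_{\chi \in \MR^n}(E_I)_\chi$. Transporting this decomposition to $E$ via the $\T$-action defines, for every $i \in I$, a filtration $E^{i}(\mu) := \sum_{\langle \chi,\beta_i\rangle \geq_c \mu}(E_I)_\chi$. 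The first thing to check is that this filtration depends only on the ray $i$, not on the ambient top-dimensional cone $I \ni i$; this follows by applying Lemma \ref{indext}(i) to the transition isomorphism on $U_I \cap U_J$, which forces the weight spaces for $\chi$ to agree in the slices relevant to $i$. The compatibility condition in Definition \ref{TKVS} then holds for every $I \in \Si$ by construction (for top-dimensional $I$), and for lower-dimensional $I$ by restricting the grading along the quotient $\MR^n/\bp[I] \twoheadrightarrow \MR^n/\bp[J]$ whenever $I \subset J \in \Si^{(n)}$. Morphism behavior is forced by Lemma \ref{indext}(i).

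To construct $\Psi$, start with $(E,\{E^i(\mu)\}) \in \TKVS_X$. For each $I \in \Si^{(n)}$, the compatibility decomposition $E = \bigoplus_{[\chi]\in \MR^n/\bp[I]} E_{[\chi]}$ lifts uniquely to a continuous $\T$-module structure on $E$ (since $\bp[I]$ is the kernel of characters of $\T_I$, and $\T/\T_I$ is trivial for top cones), yielding an equivariant trivial bundle $\ME_I := U_I \times E$. The transition functions $g_{IJ}\colon (U_I \cap U_J) \times E \to (U_I \cap U_J) \times E$ are defined on the open orbit as the identity on $E$ twisted by the difference of the two $\T$-actions; concretely they are block-scalar with entries $\chi_I^{-1}\chi_J(t)$ on each simultaneous weight subspace. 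Here the core technical step is to invoke Lemma \ref{cext}: these character ratios extend continuously to $U_I \cap U_J$ precisely when $\langle \chi_J - \chi_I,\beta_i\rangle \geq_c 0$ for every $i \in I\cap J$, and this is exactly what the compatibility condition on the face $I\cap J$ (shared by both the $I$- and $J$-gradings after passing to $\MR^n/\bp[I\cap J]$) guarantees. The cocycle condition $g_{JK}g_{IJ}=g_{IK}$ then holds on the dense orbit by a character computation and extends by continuity.

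Finally, that $\Phi$ and $\Psi$ are quasi-inverse follows from Lemma \ref{indext}(ii) applied on each chart: starting from a bundle, applying $\Phi$ and then $\Psi$ reconstructs the same local trivializations and compatible transitions, so $\Psi\Phi \simeq \mathrm{id}$; the other composition $\Phi\Psi \simeq \mathrm{id}$ is essentially tautological because $\Phi$ reads off the graded pieces that $\Psi$ used to build the bundle. The functoriality on morphisms is Lemma \ref{indext}(i) in both directions, restricted chart by chart and reassembled. I expect the main obstacle to be the verification that the transition functions extend continuously and satisfy the cocycle relation, since this is where the order $\geq_c$ and the partial nature of $\MR$ (as opposed to $\Z$) intervene; the technical heart of the argument is isolating this into Lemma \ref{cext} and showing the compatibility condition at a face controls overlaps of maximal charts meeting along that face.
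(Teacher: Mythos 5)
Your proposal follows essentially the same route as the paper: both directions are built chart-by-chart from the weight decomposition of the fiber over the fixed point of each top-dimensional cone, with Lemma \ref{cext} converting continuous extendability of the character ratios $\chi_I^{-1}\chi_J$ on $U_I\cap U_J$ into the inclusion/equality of the filtrations indexed by rays of $I\cap J$, and Lemma \ref{indext} handling both well-definedness (independence of the chosen grading/extension) and the correspondence of morphisms. The only cosmetic difference is that the paper phrases the $\T$-module structure on the fiber via the splitting $\T\simeq\T_I\times\iota_I(\MO(I))$ of Lemma \ref{split}, whereas you use directly that $\gamma_I$ is fixed by all of $\T$ for $I\in\Si^{(n)}$; these coincide.
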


\begin{coro}
    Let $\Phi:\TKVB_{X}\to \TKVS_{X}$ be an equivalence in the proof of Theorem ~\ref{main}, then a morphism $f:\ME\to \MF$ in $\TKVB_{X}$ is a monomorphism (resp.~epimorphism, resp.~isomorphism) if and only if $\Phi(f)$ is a monomorphism (resp.~epimorphism, resp.~isomorphism). 
\end{coro}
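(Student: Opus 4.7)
The plan is to combine two ingredients: (a) any equivalence of categories preserves and reflects categorical monomorphisms, epimorphisms, and isomorphisms, and (b) the explicit notions of mono/epi/iso in Def~\ref{TKVS} (and the analogous fiberwise notions in $\TKVB_X$) agree with their categorical counterparts. Given (a) and (b), the corollary is immediate. I would organize the proof into three parallel cases, one for each of mono, epi, and iso.

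For monomorphisms, the $\TKVS_X$ side reduces to linear algebra: a morphism $f\colon E\to F$ is a categorical mono iff the underlying linear map is injective. The injective-implies-cancellable direction is immediate; for the converse, one tests against the inclusion $\ker f \hookrightarrow E$, whose inherited filtrations $(\ker f)\cap E^i(\mu)$ still satisfy the compatibility condition because the $\MR^n/\bp$-grading of $E$ adapted to each top-dimensional cone $I \in \Si^{(n)}$ restricts to one on $\ker f$. On the $\TKVB_X$ side, Lemma~\ref{indext}(i) identifies a bundle morphism on each equivariantly trivialized chart $U_I$ with an underlying linear map between fibers, and categorical mono corresponds to injectivity of that map. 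Since $\Phi$ preserves these underlying linear maps (being essentially a fiber-together-with-filtration-data functor), the two notions match.

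For epimorphisms, the filtration equality $f(E^i(\mu))=F^i(\mu)$ in Def~\ref{TKVS} is decisive. In $\TKVS_X$ this is exactly what ensures right-cancellation: if it fails at some $(i_0,\mu_0)$, one builds two distinct morphisms $F\to G$ agreeing after composition with $f$, by exploiting the $\MR^n/\bp$-graded refinement of the compatibility condition to construct a suitable $G$; conversely the equality gives cancellation directly. On the bundle side, categorical epi corresponds to fiberwise surjectivity on every fiber, and expanding the $\T_I$-representation on fibers near each distinguished point $\gamma_I$ into character components shows that surjectivity on non-generic fibers is precisely governed by the filtration equality chart by chart. The isomorphism case then follows from the mono and epi cases combined, or more directly from $\Phi$ preserving two-sided inverses. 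The main obstacle is the epimorphism step: one must construct the right test object $G$ in $\TKVS_X$ using the compatibility condition, and then match fiberwise surjectivity on non-generic fibers with the combinatorial filtration equality via Lemma~\ref{indext}(i) applied chart by chart; the mono and iso cases are comparatively direct.
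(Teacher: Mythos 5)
The paper gives no written proof of this corollary, so I am judging your proposal on its own terms; unfortunately its central premise fails. Your ingredient (b) — that the explicit mono/epi/iso of Definition~\ref{TKVS} agree with the categorical notions — is false for epimorphisms. In $\TKVS_{X}$, if the underlying linear map of $f\colon E\to F$ is surjective, then $gf=hf$ already forces $g=h$ as linear maps, hence as morphisms; so $f$ is right-cancellable \emph{whether or not} $f(E^{i}(\mu))=F^{i}(\mu)$, and no test object $G$ of the kind you describe can exist. Such morphisms do occur: with a single ray, take $E=E_{[\chi]}$ with $\langle\chi,\beta_{1}\rangle=\1$ and $F=F_{[0]}$; the identity of $\C$ is a filtration-preserving surjection $F\to E$, but $F^{1}(\1)=0$ is not carried onto $E^{1}(\1)=E$. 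The same mismatch occurs on the bundle side: a morphism surjective on the dense orbit but not on the special fibers is right-cancellable by density and continuity. So ``categorical epimorphism'' is the wrong invariant in \emph{both} categories, and the reduction to ``equivalences preserve and reflect categorical epis'' cannot establish the statement. Your monomorphism argument also has an unverified step: $\ker f$ need not be a graded subspace for the $\MR^{n}/\bp$-grading of $E$ adapted to a cone $I$ (that grading is adapted to $E$'s filtrations, not to $f$), so the compatibility condition for $(\ker f,\{\ker f\cap E^{i}(\mu)\})$ is not established; an element $e_{1}+e_{2}$ with $e_{j}\in E_{[\chi_{j}]}$ can lie in $\ker f$ without $e_{1}$ doing so.

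What the corollary actually asserts is a correspondence between the \emph{fiberwise} notions in $\TKVB_{X}$ and the \emph{explicit} notions of Definition~\ref{TKVS}, and the proof should be direct rather than categorical: over each chart $U_{I}$, Lemma~\ref{indext} and the weight decomposition $E=\bigoplus_{[\chi]}E_{[\chi]}$ express the fiber map of $f$ at a point of $\MO(J)$, $J\leq I$, as the generic-fiber map $\varphi$ with each component $E_{[\chi]}\to F_{[\psi]}$ multiplied by the continuous extension of $\chi\psi^{-1}$, which vanishes at $\gamma_{J}$ exactly when $\langle\chi-\psi,\beta_{i}\rangle>_{c}0$ for some $i\in J$. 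Injectivity (resp.\ surjectivity) of all these degenerate fiber maps is then equated, chart by chart, with injectivity of $\varphi$ (resp.\ with the filtration equalities $\varphi(E^{i}(\mu))=F^{i}(\mu)$). That computation is the missing content of your proposal.
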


\begin{defi}
For each $I\in \Si$, we denote by $\Hom_{cont}(\T_{I},\CS)$ the group of continuous homomorphisms $\T_{I}\to \CS$ and by $\Hom_{sm}(\T_{I},\CS)$ the group of smooth homomorphisms $\T_{I}\to \CS$.   
\end{defi}

The following lemma explains the meaning of the index set $\MR^{n}/\bp$ in the grading of $E$.
\begin{lemm} \label{character}
For each $I\in \Si,\Hom_{cont}(\T_{I},\CS)=\Hom_{sm}(\T_{I},\CS)\simeq \MR^{n}/\bp$ as abelian groups.
\end{lemm}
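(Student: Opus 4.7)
The plan is to identify $\T_I$ explicitly with the smaller complex torus $(\CS)^I$ via $\la_I := \prod_{i\in I}\la_{\beta_i}$ and then invoke Lemma~\ref{allcharacters} to classify its continuous characters. First, I would exploit the nonsingularity of $\De$ to extend the partial family $\{\beta_i\}_{i\in I}$ to a full ``basis'' of $\MR^n$: namely, extend $\{v_i\}_{i\in I}$ to a $\Z$-basis $\{v_i\}_{i=1}^n$ of $\Z^n$ (using nonsingularity) and $\{b_i\}_{i\in I}$ to an $\R$-basis $\{b_i\}_{i=1}^n$ of $\R^n$; setting $c_i=0$ and $\beta_i=(b_i,v_i)$ for the new indices $i\notin I$ produces an extension $\{\beta_i\}_{i=1}^n$ meeting the hypotheses of Lemma~1.6, which supplies a dual family $\{\alpha_i\}_{i=1}^n$ with $\langle\alpha_i,\beta_j\rangle=\delta_{ij}\1$. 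By Lemma~\ref{lem1}, the map $\la_{[n]}:=\prod_{i=1}^n\la_{\beta_i}\colon (\CS)^n\to \T$ is an isomorphism of Lie groups.

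Next, I would show that $\la_I\colon (\CS)^I\to \T_I$ is a Lie group isomorphism. Surjectivity holds by the definition of $\T_I$, and injectivity is immediate because $\la_I$ is the restriction of $\la_{[n]}$ to the subgroup $(\CS)^I\times\{1\}^{[n]\setminus I}\subset (\CS)^n$. Applying Lemma~\ref{allcharacters} to $(\CS)^I$ and transporting via $\la_I$ then yields $\Hom_{cont}(\T_I,\CS)=\Hom_{sm}(\T_I,\CS)\simeq \MR^I$, since every continuous character of $(\CS)^I$ is automatically smooth and of the form $\chi^\gamma$ for a unique $\gamma\in\MR^I$.

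Finally, I would relate $\MR^I$ to $\MR^n/\bp$ via the restriction map $\Psi\colon \MR^n\to \MR^I$, $\alpha\mapsto (\langle\alpha,\beta_i\rangle)_{i\in I}$. Under the identification above, $\Psi(\alpha)$ corresponds to the pullback $\chi^\alpha\circ \la_I$, because
\[
\chi^\alpha(\la_I(h))=\prod_{i\in I}\chi^\alpha(\la_{\beta_i}(h_i))=\prod_{i\in I}h_i^{\langle\alpha,\beta_i\rangle},
\]
where the second equality uses the rule $(g^{\mu_1})^{\mu_2}=g^{\mu_2\mu_1}$. Clearly $\Ker\Psi=\bp$ by the very definition of $\bp$, and surjectivity follows from $\Psi\bigl(\sum_{i\in I}\gamma_i\alpha_i\bigr)=\gamma$ using the duality relation. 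Combining everything produces the desired isomorphism $\MR^n/\bp\simeq\Hom_{cont}(\T_I,\CS)=\Hom_{sm}(\T_I,\CS)$.

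The main obstacle is the first step: producing an extension of $\{\beta_i\}_{i\in I}$ to a full family $\{\beta_i\}_{i=1}^n$ that satisfies the hypotheses of Lemma~1.6, which is precisely where nonsingularity of $\De$ is used. Once this is in place, the identification $\T_I\simeq (\CS)^I$ becomes formal, and the kernel/surjectivity analysis of $\Psi$ reduces to a routine computation with the duality pairing.
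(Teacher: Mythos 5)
Your proof is correct and follows essentially the same route as the paper: identify $\T_{I}$ with a $|I|$-dimensional complex torus, classify its characters via Lemma~\ref{allcharacters}, and use a dual family to show that the restriction map $\alpha\mapsto\chi^{\alpha}|_{\T_{I}}$ is surjective with kernel $\bp$. The only real difference is that the paper obtains its dual family by choosing a top-dimensional cone $I_{0}\in\Si^{(n)}$ containing $I$ and using $\{\alpha^{I_{0}}_{k}\}$, whereas you complete $\{\beta_{i}\}_{i\in I}$ by hand using nonsingularity --- a slightly more robust variant, since it does not presuppose that every $I\in\Si$ is a face of some top-dimensional cone.
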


\begin{proof}
Notice that $\T_{I}\simeq (\CS)^{k}$ where $k=|I|$, therefore by the proof of lemma ~\ref{allcharacters}, $\Hom_{cont}(\T_{I},\CS)=\Hom_{sm}(\T_{I},\CS)$. For the isomorphism, on the one hand, given $\alpha\in \MR^{n}$, we have $\chi^{\alpha}:\T\to \CS$ such that $\chi^{\alpha}(\la(h_1,\dots,h_m))=\prod_{k=1}^{m}h_{k}^{\langle \alpha,\beta_k \rangle}$. In particular, if $\alpha\in \bp$, then $\chi^{\alpha}(\T_{I})=\1$. Therefore by restricting to $\T_{I}$, $\alpha\in \MR^{n}/\bp$ gives rise to $\chi^{\alpha}:\T_{I}\to \CS$. On the other hand, given $\chi:\T_{I}\to \CS$ where $I\in \Si$, choose an $I_{0}\in \Si^{(n)}$ that contains $I$. Suppose $\chi(\la(h_1,\dots,h_m))=\prod_{k\in I}h_{k}^{\la_{k}}$ where $[h_1,\dots,h_m]\in \T_{I}$ (the bracket denotes the equivalence class). Let $\alpha=\Si_{k\in I}\la_{k}\alpha^{I_{0}}_{k}$ where $\alpha^{I_{0}}$ is dual to $\beta_{I_{0}}$, then $\chi=\chi^{\alpha}:\T_{I}\to \CS$. It is easy to check that these two assignments are inverse to each other, hence the result.
\end{proof}

\begin{lemm} \label{split}
There is a canonical splitting $\T\simeq \T_{I}\times \iota_{I}(\MO(I))$ where $\iota_{I}:\MO(I)\to \T$ is an embedding for each $I\in \Si^{(n)}$.
\end{lemm}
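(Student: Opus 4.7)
The plan is to observe that for top-dimensional cones the asserted splitting is essentially tautological, because $\gamma_I$ is a fixed point and $\MO(I)$ is a singleton. I would carry this out in three short steps.

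First, I would apply Theorem \ref{orbit-cone}(b) to $J = I \in \Si^{(n)}$, obtaining $\dim_\C \MO(I) = n - n = 0$, so $\MO(I) = \{\gamma_I\}$ is a single point. In particular $\T$ fixes $\gamma_I$, and hence $\T_I$ (the stabilizer of $\gamma_I$) equals all of $\T$. For a direct check of this from Definition \ref{bp}, note that $\T_I = ((\CS)^I \times \1^{\Si^{(1)} \setminus I})/\Ker\la$, and the restriction of the defining surjection $\la$ to the subgroup $(\CS)^I \times \1^{\Si^{(1)} \setminus I}$ equals $\prod_{k \in I}\la_{\beta_k}$. Since $I \in \Si^{(n)}$, the family $\{\beta_i\}_{i \in I}$ admits a dual family $\{\alpha_i^I\}_{i \in I}$ (by the lemma preceding Lemma \ref{lem1}), so Lemma \ref{lem1} shows that this product is an automorphism of $(\CS)^n = \T$, and consequently $\T_I \hookrightarrow \T$ is an isomorphism.

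Next, I would define the embedding $\iota_I \colon \MO(I) \to \T$ by sending the unique point $\gamma_I$ to $1 \in \T$. This is trivially a topological embedding, and it is canonical because $1$ is the distinguished element of $\T$, so no arbitrary choices enter. The product decomposition $\T \simeq \T_I \times \iota_I(\MO(I))$ then reduces to $\T \simeq \T \times \{1\}$, which holds via the obvious projection onto the first factor.

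The only step requiring actual verification is the identification $\T_I = \T$ via Lemma \ref{lem1}; beyond that the statement is a tautology about singletons, so I do not anticipate any further obstacle. The \emph{canonicity} of the splitting is secured by the fact that both the identity element $1 \in \T$ and the distinguished point $\gamma_I$ are intrinsic to the data of the topological fan.
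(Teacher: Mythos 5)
Your argument is internally sound for the statement as literally printed: for $I\in\Si^{(n)}$, Theorem~\ref{orbit-cone}(2) gives $\dim_\C\MO(I)=0$, so $\MO(I)=\{\gamma_I\}$, and your identification $\T_I=\T$ via Lemma~\ref{lem1} is correct, so the asserted splitting collapses to $\T\simeq\T\times\{1\}$. However, this reading empties the lemma of its content and is not what the paper proves or uses. The paper's own proof begins ``Given $I\in\Si$'' and establishes the splitting for an arbitrary cone $I$ of any dimension: it exhibits the exact sequence $\1\to\T_I\to\T\to\MO(I)\to\1$ and constructs an explicit section $\iota_I$ by taking a representative $h$ of $w\in\MO(I)$ and replacing its coordinates indexed by $I$ with $1$. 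For $|I|<n$ the orbit $\MO(I)$ has positive dimension $n-|I|$ and $\T_I$ is a proper subtorus, so the splitting is genuinely nontrivial there; the restriction to $\Si^{(n)}$ in the displayed statement is evidently a slip in the paper.

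This matters because of how the lemma is invoked: in the proof of Theorem~\ref{main}, for every $I\in\Si$ one extends the representation $\varphi_I\colon\T_I\to\Aut(\ME(\gamma_I))$ to all of $\T$ ``by Lemma~\ref{split}.'' For top-dimensional $I$ that extension is vacuous (precisely because, as you observe, $\T_I=\T$ there), so the only cases in which the lemma does any work are exactly the ones your proof does not address. To give a proof of the lemma in the generality in which it is applied, you would need to handle $|I|<n$: fix $I_0\in\Si^{(n)}$ containing $I$, verify that $(\CS)^I\times\1^{\Si^{(1)}\setminus I}$ maps isomorphically onto $\T_I\subset\T$, identify the quotient $\T/\T_I$ with $\MO(I)$, and check that the coordinatewise section $\iota_I$ is well defined independently of the chosen representative modulo $\Ker\la$ (using the description of $\Ker\la$ in Lemma~\ref{lemm:kerl}) and splits the sequence as a homomorphism. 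None of this is touched by the singleton argument.
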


\begin{proof}
Given $I\in \Si$, pick an arbitrary $I_{0}\in \Si^{(n)}$ that contains $I$. We have the following exact sequence 
\[\1\to \T_{I}\to \T=(\CS)^{n}\simeq (\CS)^{m}/\ker\la \to \MO(I) \to \1. \]
The morphism $f_{1}:\T_I\to \T$ is $f_{1}([h_1,\dots,h_m])=[w]$ where $w_i=h_i$ if $i\in I$ and $w_i=1$ if $i\in I_{0}\backslash I$. The morphism $f_{2}:(\CS)^{I_{0}} \to \MO(I)$ is defined as follows: given $g=(g_{i})_{i\in I_{0}}\in (\CS)^{I_{0}}$, let $h_{i}=\chi^{\alpha^{I_{0}}_{i}}(g)$ for $i\in I_{0}$ and $h_{i}=1$ otherwise, then $\la(h)=g$. Define $f_{2}(g)=[w]$ where $w_i=0$ for $i\in I$ and $w_i=h_i$ for $i\in \Si^{(1)}\backslash I$. It is easy to check that the above sequence is exact. We define $\iota_{I}$ below: Given $w\in \MO(I)$, let $h\in 0^{I}\times 1^{\Si^{(1)}\backslash I}$ such that $[h]=w$, then let $\iota_{I}(w)_{i}=1$ if $i\in I$ and $\iota_{I}(w)_{i}=h_i$ otherwise. Then $f_{2}\circ \iota=id$. Hence the exact sequence splits.
\end{proof}

\begin{proof}[Proof of Theorem ~\ref{main}]
Klyachko's original proof goes through with the previous lemmas:\\
$ $\\
I. Construction of a functor $\Phi:\TKVB_{X}\to \TKVS_{X}$:\\
Let $\ME$ be a smooth Klyachko vector bundle over $X$. Let $X=\bigcup_{I\in \Si}U_{I}$ be the open covering by the open invariant subsets $U_{I}$ corresponding to $I\in \Si$. By definition, we have $\left.\ME\right|_{U_{I}}\simeq U_{I}\times \ME(\gamma_{I})$ for all $I\in \Si$. Because $U_{I}$ is $\T_{I}$-invariant and $\T$-invariant, we have 
\[\varphi_{I}:\T_{I}\to \Aut(\ME(\gamma_{I}))\]
which can be extended to 
\[\varphi_{I}:\T\to \Aut(\ME(\gamma_{I})) \]
by lemma ~\ref{split}.
The transition functions $f_{I|J}:U_{I}\bigcap U_{J}\to \Hom(\ME(\gamma_I),\ME(\gamma_J))$ satisfy the cocycle conditions $f_{I|J}f_{J|K}f_{K|I}=\1$ and $f_{I|J}f_{J|I}=\1$ and the equivariance condition 
\[f_{I|J}(tx)=\varphi_{I}(t)f_{I|J}(x)\varphi_{J}(t)^{-1}.\]
Let $x_{0}$ be an arbitrary point in the open dense orbit $\MO(\emptyset)$. We identify all fibers of $\ME$ with $E=\ME(x_{0})$ via $f_{IJ}$, i.e. we assume that $f_{IJ}(x_{0})=\1$. Then $f_{IJ}(tx_{0})=\varphi_{I}(t)\varphi_{J}(t)^{-1}$ and the map $\varphi_{I}(t)\varphi_{J}(t)^{-1}$ can be smoothly extended from $\T$ to $U_{I}\bigcap U_{J}$.
We will interpret this extendability condition in terms of inclusion of filtrations below. First, as a representation space of $\T_{I}$, the space $E$ admits a weight decomposition 
\[E=\bigoplus_{[\chi]\in \MR^{n}/\bp}E_{[\chi]}  \]
where $\chi\in \MR^{n}/\bp$ is considered as a map $\T_{I}\to \CS$ by lemma ~\ref{character} and $E_{[\chi]}:=\{x\in E| t\cdot x=\chi(t)x \hspace{0.2cm} \forall t\in \T_{I}\}$. Define a family of subspaces of $E$ by 
\begin{equation*}
E^{i,I}(\mu)=\sum_{[\chi]\in \MR^{n}/\bp,\langle \chi,\beta_{i} \rangle\geq_{c} \mu}E_{[\chi]},\hspace{0.2cm} \forall i\in I.     
\end{equation*}

We prove that $\varphi_{I}(t)\varphi_{J}(t)^{-1}$ can be continuously extended from $\T$ to $U_{I}\bigcap U_{J}$ iff $E^{i,J}(\mu)\subset E^{i,I}(\mu)$ for all $i\in I\bigcap J$ and $\mu\in \MR$. Indeed, let $e_i,i\in \La_{I}$ be a diagonalizing basis of the representation $\varphi_{I}$, i.e. $\varphi_{I}(t)e_{i}=\chi_{i}(t)e_{i}, \chi_{i}\in \Hom(\T,\CS)$ and let $f_{j},j\in \La_{J}$ be a diagonalizing basis of the representation $\varphi_{J}$, i.e. $\varphi_{J}(t)f_{j}=\psi_{j}(t)f_{j},\psi_{j}\in \Hom(\T,\CS)$. Let $f_{j}=\Si_{i}a_{ij}e_{i}$. Then 
\[\varphi_{I}\varphi_{J}^{-1}f_{j}=\Si_{i}a_{ij}\chi_{i}\psi_{j}^{-1}e_{i}. \]
By lemma ~\ref{cext}, the character $\chi_{i}\psi_{j}^{-1}$ can be continuously extended to $U_{I}\bigcap U_{J}$ iff $\langle \chi_{i},\beta_{k}\rangle \geq_{c} \langle \psi_{j},\beta_{k}\rangle $ for all $k\in I\bigcap J$. In other words, 
\[E^{i,J}(\mu)\subset E^{i,I}(\mu) \hspace{0.2cm} \forall i\in I\bigcap J \text{ and } \mu\in \MR.\] As a consequence, $E^{i,I}(\mu)=E^{i,J}(\mu)$ for all $i\in I,J$ because both $f_{IJ}$ and $f_{JI}$ can be continuously extended to $U_{I}\bigcap U_{J}$. Therefore we denote $E^{i,I}(\mu)$ simply by $E^{i}(\mu)$. Define $\Phi(\ME):= (E,\{E^{i}(\mu)| i\in \Si^{(1)},\mu\in \MR\})$. \\
$ $\\
Suppose $f:\ME\to \MF$ is a $\T$-equivariant map between two topological Klyachko vector bundles $\ME,\MF$ over $X$. For each $I\in \Si^{(n)}$, we have the following commutative diagram 
\begin{center}
    \begin{tikzcd}
        \left.\ME\right.\!|_{U_{I}} \ar[r,"f"] \ar[d,"\simeq"] & \left.\MF \right.\!|_{U_{I}} \ar[d,"\simeq"]\\
        U_{I}\times E \ar[r] &U_{I}\times F
    \end{tikzcd}
\end{center}
Define $\Phi(f)$ to be $\left.f\right.|_{x_{0}}:E\to F$. Suppose $\{E^{i}(\mu)| i\in \Si^{(1)},\mu\in \MR\}$ comes from a grading $E=\bigoplus_{\chi\in \MR^{n}/\bp}E_{[\chi]}$ and $\{F^{i}(\mu)| i\in \Si^{(1)},\mu\in \MR\}$  comes from $F=\bigoplus_{\chi\in \MR^{n}/\bp}F_{[\chi]}$, then by equivariance of $f$, we must have $\Phi(f)(E^{i}(\mu))\subset F^{i}(\mu)$.\\
$ $\\
II. Construction of a functor $\Psi: \TKVS_{X}\to \TKVB_{X}$: \\
Given a family of doubly indexed subspaces $E^{i}(\mu)$ that correspond to $E=\bigoplus_{\chi\in \MR^{n}/\bp}E_{[\chi]}$, we have a representation of $\T_{I}$ that can be extended to a representation $\varphi_{I}$ of $\T$ by lemma ~\ref{split}. As the subspaces are decreasing, $f_{IJ}(tx_{0})=\varphi_{I}(t)\varphi_{J}^{-1}(t)$ can be extended to $U_{I}\bigcap U_{J}$ by the first part of this proof. Therefore the cocycle $f_{IJ}$ determines a topological Klyachko vector bundle $\ME=\Psi(E,\{E^{i}(\mu)| i\in \Si^{(1)},\mu\in \MR\})$ over $X(\De)$. By lemma ~\ref{indext}, $\left.\ME \right.|_{U_{I}}$ does not depend on the choice of an extension of $\varphi_{I}$ for each $I\in \Si^{(n)}$. That the bundle $\ME$ does not depend on the choice of gradings $E^{I}$ is exactly the same as Klyachko's original proof.\\
$ $\\
Suppose given a morphism $f:(E,\{E^{i}(\mu)\})\to (F,\{F^{i}(\mu)\})$ in $\TKVS_{X}$, then by lemma ~\ref{indext}, we have a $\T$-equivariant morphism of vector bundles $f_{I}:U_{I}\times E\to U_{I}\times F$ for each $I\in \Si^{(n)}$. For any two $I,J\in \Si^{(n)}$, notice that $f_{I}$ and $f_{J}$ coincide as their restrictions to the dense subset $\T\bigcap U_{I}\bigcap U_{J}$ are the same. Hence they glue to a $\T$-equivariant morphism $\Psi(E,\{E^{i}(\mu)\})\to \Psi(F,\{F^{i}(\mu)\})$. \\
$ $\\
We can quickly check that $\Psi(\Phi(\ME))\simeq \ME$ and $\Phi(\Psi(E,\{E^{i}(\mu)\}))\simeq (E,\{E^{i}(\mu)\})$.
\end{proof}

\section{Smooth Klyachko vector bundles}
\begin{defi}
A smooth Klyachko vector bundle of rank $r$ over an $n$-dimensional topological toric manifold $X(\De)$ with a $\T$-equivariant complex structure associated with a topological toric fan $\De=(\Si,\beta)$ is a $\T$-equivariant smooth complex vector bundle $\ME\to X(\De)$ whose restriction $\left. \ME \right|_{U_{I}}$ is equivariantly diffeomorphic to $U_{I}\times \C^{r}$ for all $I\in \Si$ and whose action map $\T\times \ME\to \ME$ is smooth.
\end{defi}

\begin{lemm} \label{ext2}
A smooth character $\chi^{\alpha}:\CS\to \CS$ where $\alpha=(b+c\sqrt{-1},v)\in \MR$ can be smoothly extended to $\C$ iff $c=0, b\in \N$ and $b\pm v\in 2\N$.  
\end{lemm}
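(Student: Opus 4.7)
The plan is to pass to polar coordinates $z=\rho e^{i\theta}$, in which $\chi^{\alpha}(z)=\rho^{b+ic}e^{iv\theta}=\rho^{b}e^{ic\ln\rho}e^{iv\theta}$ on $\CS$. Since smoothness implies continuity, Lemma~\ref{ext1} immediately reduces the problem to two cases: the trivial case $b=c=v=0$, where $\chi^{\alpha}\equiv 1$ extends as the constant function $1$ and the numerical conditions obviously hold; and the case $b>0$, which is the main content.

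For sufficiency in the main case, I will assume $c=0$, $b\in\N$, and $b\pm v\in 2\N$. Setting $p:=(b+v)/2$ and $q:=(b-v)/2$, both $p$ and $q$ are non-negative integers by hypothesis, so $P(z):=z^{p}\bar{z}^{q}$ is a polynomial in $z,\bar{z}$ and hence smooth on all of $\C$. A polar computation gives $P(\rho e^{i\theta})=\rho^{p+q}e^{i(p-q)\theta}=\rho^{b}e^{iv\theta}$, which agrees with $\chi^{\alpha}$ on $\CS$, so $P$ is the desired smooth extension.

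For necessity, I will assume that $\chi^{\alpha}$ admits a smooth extension to $\C$. By Taylor's theorem, for every $N\in\N$ there is a polynomial $P_{N}(z,\bar{z})=\sum_{j+k\le N}a_{jk}z^{j}\bar{z}^{k}$ with $\chi^{\alpha}(z)-P_{N}(z,\bar{z})=o(|z|^{N})$ as $z\to 0$. Restricting both sides to the circle $|z|=\rho$ and extracting the Fourier coefficient of $e^{iv\theta}$ yields
\[
\rho^{b+ic}=\sum_{\substack{j-k=v\\ j+k\le N}}a_{jk}\,\rho^{j+k}+o(\rho^{N}).
\]
The constraints $j,k\ge 0$ and $j-k=v$ force every exponent $j+k$ appearing on the right to lie in the arithmetic progression $\{|v|,|v|+2,|v|+4,\ldots\}$. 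I will then run a trichotomy on how $b$ compares to the smallest exponent $m_{0}$ actually occurring with nonzero coefficient: the inequalities $b>m_{0}$ and $b<m_{0}$ can both be excluded by comparing the leading magnitudes $\rho^{b}$ and $\rho^{m_{0}}$ on the two sides. Therefore $b=m_{0}=|v|+2k_{0}$ for some $k_{0}\ge 0$, which is exactly $b\pm v\in 2\N$ (and in particular $b\in\N$). Dividing by $\rho^{b}$ in the leading order and letting $\rho\to 0^{+}$ then forces $\rho^{ic}\to a_{k_{0}}$, which fails for $c\ne 0$ and hence yields $c=0$.

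The hard part will be the very last step of the necessity argument, namely ruling out $c\ne 0$. The subtlety is that $|e^{ic\ln\rho}|=1$, so the oscillatory factor does not violate polynomial bounds by size; one must rather exploit its non-convergence as $\rho\to 0^{+}$. The key observation is that if $\lim_{\rho\to 0^{+}}e^{ic\ln\rho}$ existed, then $c\ln\rho$ would have to approach a constant modulo $2\pi$, which is impossible since $c\ln\rho\to\mp\infty$ when $c\ne 0$; this contradicts the equation $\rho^{ic}\to a_{k_{0}}$ extracted above.
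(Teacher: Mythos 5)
Your proof is correct, and it takes a genuinely different route from the paper's. The paper proves necessity by restricting $\chi^{\alpha}$ to the positive real axis and differentiating repeatedly to force $b\in\N$ and $c=0$, and then handles the parity condition $b\pm v\in 2\N$ by a case analysis on the relative size of $b$ and $v$ together with a degree observation about quotients of (Puiseux-type) homogeneous expressions; sufficiency is dismissed as clear. You instead combine Taylor's theorem at the origin with Fourier analysis on circles: restricting the asymptotic expansion $\chi^{\alpha}(z)=P_{N}(z,\bar z)+o(|z|^{N})$ to $|z|=\rho$ and isolating the $e^{iv\theta}$-mode forces $\rho^{b+ic}$ to match a series in $\rho^{|v|},\rho^{|v|+2},\dots$, which delivers $b\in\N$, $b\ge|v|$, the parity condition, and $c=0$ all in one uniform computation; and you make sufficiency explicit by exhibiting the monomial $z^{(b+v)/2}\bar z^{(b-v)/2}$ as the extension. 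Your argument is arguably tighter than the paper's treatment of the parity case, which is somewhat sketchy. Two small points you should make explicit: the Taylor coefficients $a_{jk}$ do not depend on $N$, so the minimal exponent $m_{0}$ is well defined; and $m_{0}$ exists at all because otherwise $\rho^{b}=o(\rho^{N})$ for $N>b$, which is absurd. With those remarks added, the proof is complete.
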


\begin{proof}
     Write $i$ for $\sqrt{-1}$ and $g=\rho e^{i\theta}\in \CS=x+yi$ where $\rho>0,\theta\in [0,2\pi),x,y\in \R$. The sufficiency is clear. We prove necessity below. Given $\chi^{\alpha}:\CS\to \CS$ that can be smoothly extended to $\C$ where $\alpha=(b+ci,v)$, by lemma ~\ref{ext1}, $b>0$ or $b=c=v=0$. In the latter case, obviously we have $c=0, b\in \N$ and $b\pm v\in 2\N$. Suppose $b>0$. Let $f(\rho):=\left.\chi^{\alpha}\right |_{\R}(\rho)=\rho^{b+ci}$. We first prove that $b\in \N$. Suppose not, then there exists $n_{0}\in \N$ such that $n_{0}<b<n_{0}+1$ and $\frac{d^{n_{0}+1}f}{d\rho^{n_{0}+1}}=C_{1}\rho^{b-n_{0}-1+ci}$ ($C_{1}$ is some non-zero constant) diverges as $\rho\to 0^{+}$, which contradicts that $\chi^{\alpha}$ is smooth at $0$. Therefore $b\in \N$. Next we prove that $c=0$. Suppose $c\neq 0$, then $\frac{d^{b+1}f}{d\rho^{b+1}}=C_{2}\cdot ci\cdot \rho^{-1+ci}$ ($C_{2}$ is some non-zero constant) diverges as $\rho\to 0^{+}$. Therefore $c=0$. Now we prove that $b\pm v \in 2\N$. The proof below is based on the observation that for Puiseux series $F,G$ in one indeterminate $x$ with finitely many terms, $deg((\frac{F}{G})')\leq deg(\frac{F}{G})$. Here the degree of a Puiseux series with finitely many terms is the largest exponent whose corresponding coefficient is nonzero. We consider three cases: 1) $0\leq v\leq b$, 2) $0<b<v$, 3) $v<0<b$. In case 1, $\chi^{\alpha}(g)=g^{v}\rho^{b-v}=(\sum_{k=0}^{v}\binom{n}{k}x^{k}(yi)^{v-k})(x^{2}+y^{2})^{\frac{b-v}{2}}$. By separating the real and imaginary parts, we see that $x^{\alpha}$ is smooth if and only if $b-v\in 2\N$ if and only if $b\pm v\in 2\N$. In case 2, $\chi^{\alpha}(g)=\frac{g^{v}}{\rho^{v-b}}$. Its real part is $\frac{F}{G}$ where $F(x,y)=\sum_{k=0}^{[\frac{v}{2}]}(-1)^{k}x^{v-2k}y^{2k}$ and $G(x,y)=(x^{2}+y^{2})^{\frac{v-b}{2}}$. Since $degF=v, degG=v-b$, by the observation, $\lim_{\rho\to 0^{+}}\frac{\partial ^{b+1}\frac{F}{G}}{\partial x^{b+1}}$ does not exist. In case 3, $\chi^{\alpha}(g)=\rho^{b+v}\OL{g}^{-v}$. By a similar reason as in case 1, $\chi^{\alpha}(g)$ is smooth if and only $b+v\in 2\N$ if and only if $b\pm v\in 2\N$. Hence we proved the necessity.
\end{proof}

\begin{defi} \label{geqs}
We can define a partial order $\geq_{s}$ on $\MR$ as follows: For $\alpha=(b+c\sqrt{-1},v)\in \MR$, we say $\alpha \geq_{s} 0=(0+0\sqrt{-1},0)$ if $c=0, b\in \N, b\pm v\in 2\N$. For $\alpha,\beta\in \MR$, we say $\alpha\geq_{s}\beta$ if $\alpha-\beta\geq_{s} 0$.
\end{defi}

\begin{prop}
    $\geq_{s}$ is a partial order.
\end{prop}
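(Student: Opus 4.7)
The plan is to verify the three axioms—reflexivity, antisymmetry, and transitivity—directly from Definition~\ref{geqs}, mirroring the proof given above for $\geq_{c}$. Since by definition $\alpha \geq_s \beta$ means $\alpha - \beta \geq_s 0$, the cleanest organization is to set $S := \{\alpha \in \MR \mid \alpha \geq_s 0\}$ and show that (i) $0 \in S$, (ii) $S$ is closed under addition, and (iii) $S \cap (-S) = \{0\}$. These three properties translate respectively into reflexivity, transitivity, and antisymmetry of $\geq_s$.

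The check $0 \in S$ is immediate, since $(0+0\sqrt{-1},0)$ has $c=0$, $b=0\in\N$, and $b\pm v=0\in 2\N$. For closure under addition, given $\alpha_i = (b_i + c_i\sqrt{-1}, v_i) \in S$ for $i=1,2$, I would add componentwise: $c_1 + c_2 = 0$; $b_1 + b_2 \in \N$ since $\N$ is closed under addition; and
\[
(b_1 + b_2) \pm (v_1 + v_2) = (b_1 \pm v_1) + (b_2 \pm v_2) \in 2\N
\]
since $2\N$ is closed under addition. For $S \cap (-S) = \{0\}$, suppose $\delta = (b+c\sqrt{-1}, v) \in S \cap (-S)$. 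Then $c = 0$, both $b$ and $-b$ lie in $\N$ (forcing $b = 0$), and both $\pm v$ and $\mp v$ lie in $2\N$ (forcing $v = 0$); hence $\delta = 0$.

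There is no substantive obstacle: each of the three defining sub-conditions ($c = 0$, $b \in \N$, $b \pm v \in 2\N$) individually carves out a submonoid of $(\MR,+)$, and the whole proof is arithmetic bookkeeping. The only minor point to remark on is that, in contrast to the $\geq_c$ case where trichotomy of $\R$ forced a case split for transitivity, here the set $S$ is literally additively closed, so transitivity is a one-line consequence without case analysis.
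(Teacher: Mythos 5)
Your proof is correct and follows essentially the same route as the paper: the paper likewise disposes of reflexivity and antisymmetry as immediate and proves transitivity "by adding these expressions," which is exactly your observation that the positive cone $S$ is closed under the (componentwise) addition of $\MR$. Your packaging via $S\cap(-S)=\{0\}$ is a tidy reorganization but not a substantively different argument.
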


\begin{proof}
Reflexivity and anti-symmetry are clear. Transitivity: Suppose $\alpha_{i}=(b_{i}+c_{i}\sqrt{-1},v_{i})\in \MR,i=1,2,3$ and $\alpha_{1}\geq_{c}\alpha_{2},\alpha_{2}\geq_{c}\alpha_{3}$, i.e. $b_{1}-b_{2},b_{2}-b_{3}\in \N, (b_{1}-b_{2})\pm (v_{1}-v_{2}),(b_{2}-b_{3})\pm (v_{2}-v_{3})\in 2\N$, then by adding these expressions, we have $b_{1}-b_{3}\in \N, (b_{1}-b_{3})\pm(v_{1}-v_{3})\in 2\N$.
\end{proof}

\begin{lemm} \label{smext}
 If $I\in \Si$, then $\chi^{\alpha}:\T\to \CS, \alpha\in \MR^{n}$ can be smoothly extended to $U_{I}$ iff $\langle \alpha,\bi\rangle \geq_{s} 0 \hspace{0.2cm}\forall i\in I$.  
\end{lemm}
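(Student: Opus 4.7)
The plan is to mirror the proof of Lemma \ref{cext} line by line, replacing each appeal to Lemma \ref{ext1} (and the partial order $\geq_c$) by the corresponding smooth criterion from Lemma \ref{ext2} (and the partial order $\geq_s$). I first reduce to the case $I\in\Si^{(n)}$: for $J\in\Si$ with $|J|<n$, choosing any $I\in\Si^{(n)}$ with $J\subset I$ gives
\[
U_J \;=\; U_I \cap \bigcap_{i\in I\setminus J}\{z_i\neq 0\},
\]
and on this open subset the coordinates $z_i$ with $i\in I\setminus J$ stay in $\CS$, so they impose no extension condition; only the indices $i\in J$ contribute.

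For the top-dimensional case $I\in\Si^{(n)}$, Lemma \ref{lispan} lets me write $\alpha=\sum_{i\in I}\la_i\alpha^I_i$ with $\la_i=\langle\alpha,\bi\rangle$. Under the equivariant chart $\varphi_I:U_I\to\C^I$ of Theorem \ref{chart}, the inclusion $\iota:\T\hookrightarrow U_I$ is $t\mapsto(\chi^{\alpha^I_i}(t))_{i\in I}$, so $\chi^{\alpha}\circ\iota^{-1}$ is the function $(z_i)_{i\in I}\mapsto\prod_{i\in I}z_i^{\la_i}$ on the open dense torus $(\CS)^I\subset\C^I$. The question therefore reduces to: when does this product extend smoothly to $\C^I$?

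The forward implication is immediate: if each $\la_i\geq_s 0$, Lemma \ref{ext2} produces a smooth extension of every factor $z_i^{\la_i}$ to $\C$, and a product of smooth functions in independent coordinates is smooth on $\C^I$. For the converse, assume $\prod_{i\in I}z_i^{\la_i}$ extends smoothly to all of $\C^I$; fixing $i_0\in I$ and restricting to the one-dimensional affine slice $\{z_j=1:j\neq i_0\}\cong\C$ yields a smooth extension of $z_{i_0}\mapsto z_{i_0}^{\la_{i_0}}$ to $\C$, forcing $\la_{i_0}\geq_s 0$ by Lemma \ref{ext2}. Combining this with the reduction above gives the claimed equivalence for arbitrary $I\in\Si$.

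The main obstacle is essentially nonexistent at this stage, because all the delicate analysis about which monomials in one complex variable extend smoothly across $0$ has already been packaged into Lemma \ref{ext2}. The only subtle point is guarding against the possibility that a product of non-smooth univariate factors could conspire to become a smooth multivariate function; the slice argument in the converse direction is exactly what rules this out, since smoothness at a point with all but one coordinate nonzero reduces to smoothness of the single-variable restriction at that point.
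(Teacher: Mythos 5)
Your proof is correct and follows the same route as the paper, which simply reduces to the argument of Lemma~\ref{cext} (writing $\alpha=\sum_{i\in I}\la_i\alpha^I_i$ via Lemma~\ref{lispan}, passing to the chart $\varphi_I$, and then handling $|J|<n$ by embedding $U_J$ into $U_I$), with $\geq_c$ and Lemma~\ref{ext1} replaced by $\geq_s$ and Lemma~\ref{ext2}. Your slice argument for the converse direction of the product-extension step is a worthwhile detail that the paper leaves implicit.
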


\begin{proof}
    The proof is the same as lemma ~\ref{cext}.
\end{proof}

\begin{lemm} \label{indext2}
\leavevmode
Let $X=U_{I}$ be an affine chart of a topological tori manifold corresponding to a cone $I\in \Si^{(n)}$. 
\begin{enumerate}[label=\roman*),]
    \item Define a family of subspaces of a complex $\T$-module $E$ by 
    \[E^{i}(\mu)=\bigoplus_{\langle \chi,i\rangle \geq_{s}\mu} E_{\chi},i\in I, \] where $E_{\chi}\subset E$ is the isotypical component of the smooth character $\chi\in \MR^{n}$.If $F$ is another $\T$-module, we define $F^{i}(\mu)$ similarly. Then the space of equivariant homomorphisms $\Hom_{\T}(X\times E,X\times F)$ is canonically isomorphic to the space of order preserving linear operators $\varphi:E\to F$ 
    \item Two smooth Klyachko vector bundles $X\times E$ and $X\times F$ are isomorphic if and only if the restrictions of the smooth representations are isomorphic $\left.E\right.|_{\T_{I}}\simeq \left.F\right.|_{\T_{I}}$.
\end{enumerate}   
\end{lemm}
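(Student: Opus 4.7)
The plan is to mimic the argument of Lemma~\ref{indext} verbatim, replacing the continuous order $\geq_c$ and the continuous extension criterion (Lemma~\ref{cext}) by their smooth counterparts $\geq_s$ and Lemma~\ref{smext}. The key point is that Lemmas~\ref{ext2}, \ref{smext} and the definition of $\geq_s$ have been set up precisely so that the same formal manipulation works in the smooth category.

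First I would reduce part (i) to the rank-one case. Since a smooth Klyachko vector bundle on $U_I$ is equivariantly diffeomorphic to a trivial bundle $U_I\times E$, and since any smooth $\T$-representation on a finite-dimensional complex vector space splits as a direct sum of characters (by Lemma~\ref{allcharacters}), it suffices to check the statement when $\dim E=\dim F=1$, say with $\T$ acting on $E$ by the character $\chi_E\in\MR^n$ and on $F$ by $\chi_F\in\MR^n$. A bundle morphism $f\colon X\times E\to X\times F$ is a smooth family of linear maps $\varphi_x\colon E\to F$, and equivariance forces $\chi_E(t)\varphi_{tx}(e)=\chi_F(t)\varphi_x(e)$ for every $t\in\T$. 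Fixing a base point $x_0$ in the open dense orbit, the morphism is therefore determined by $\varphi:=\varphi_{x_0}$ via $f(tx_0,e)=(tx_0,\chi_E^{-1}\chi_F(t)\varphi(e))$, and $f$ extends to all of $U_I$ as a smooth morphism if and only if either $\varphi=0$ or the character $\chi_E^{-1}\chi_F$ extends smoothly from $\T$ to $U_I$.

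Next I would invoke Lemma~\ref{smext} to translate the smooth extendability of $\chi_E^{-1}\chi_F$ into the inequalities
\[
\langle \chi_F,\beta_i\rangle \geq_s \langle \chi_E,\beta_i\rangle,\qquad \forall i\in I.
\]
By the definition of $E^i(\mu)=\bigoplus_{\langle\chi,\beta_i\rangle\geq_s\mu}E_\chi$, this is exactly the statement that $\varphi(E^i(\mu))\subset F^i(\mu)$ for all $i\in I$ and $\mu\in\MR$. Running the same argument in higher rank (by decomposing $E$ and $F$ into isotypical components and choosing diagonalizing bases as in the proof of Theorem~\ref{main}) gives the claimed canonical bijection between $\Hom_\T(X\times E,X\times F)$ and order-preserving linear operators $\varphi\colon E\to F$; naturality in $E$ and $F$ is immediate from the construction.

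For part (ii), the forward direction is straightforward: an isomorphism of equivariant bundles restricts to an isomorphism on the fiber over the unique fixed point $\gamma_I\in U_I$, and that fiber carries the restricted $\T_I$-representation $E|_{\T_I}$ (respectively $F|_{\T_I}$). For the converse, an isomorphism $E|_{\T_I}\simeq F|_{\T_I}$ of smooth $\T_I$-representations matches up the $\MR^n/\bp$-weight decompositions (using Lemma~\ref{character} and the identification of smooth and continuous characters of $\T_I$) and hence identifies the posets $\{E^i(\mu)\}$ and $\{F^i(\mu)\}$; part (i) then yields inverse order-preserving morphisms, which produce mutually inverse $\T$-equivariant bundle isomorphisms. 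The only potential obstacle worth flagging is that the passage from the rank-one case to the general case requires choosing compatible diagonalizations — but since both characters live in the abelian group $\MR^n$ and everything reduces componentwise, this is a cosmetic rather than substantive difficulty; all the real content has been absorbed into Lemma~\ref{smext}.
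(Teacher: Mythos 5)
Your proposal is correct and follows exactly the route the paper takes: the paper's own proof of this lemma is the single remark that one repeats the argument of Lemma~\ref{indext} with Lemma~\ref{cext} replaced by Lemma~\ref{smext} (equivalently, $\geq_{c}$ replaced by $\geq_{s}$), which is precisely the reduction to rank one, the extendability criterion for $\chi_{E}^{-1}\chi_{F}$, and the fixed-point-fiber argument you spell out.
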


\begin{proof}
The proof is same as the one for lemma ~\ref{indext} except that we replace lemma ~\ref{cext} by lemma ~\ref{smext}.    
\end{proof}

\begin{defi}
    For a topological toric manifold $X$, the category $\SKVB_{X}$ is the category whose objects are smooth Klyachko vector bundles over $X$ and whose morphisms are morphisms of equivariant complex vector bundles that are $\C$-linear in fibers.
\end{defi}

\begin{defi} \label{SKVS}
For a topological toric manifold $X$, the category $\SKVS_{X}$ is the category whose objects are vector spaces $E$ with a poset $P$ of subspaces $E^{i}(\mu)$ of $E$ indexed by $\mu\in \MR,i\in \Si^{(1)}$ which satisfy the following compatibility condition:\\
    For any $I\in \Si$, there exists a $\MR^{n}/\bp$-grading   \[E=\bigoplus_{[\chi]\in \MR^{n}/\bp}E_{[\chi]}\] for which
    \[E^{i}(\mu)=\sum_{\langle \chi,\beta_{i} \rangle \geq_{s}\mu}E_{[\chi]},\hspace{0.2cm} \forall i\in I.\]
    A morphism $f:(E,\{E^{i}(\mu)\})\to (F,\{F^{i}(\mu)\})$ in $\SKVS_{X}$ is a linear transformation $f:E\to F$ such that $f(E^{i}(\mu))\subset F^{i}(\mu)$ for all $i\in \Si^{(1)}$ and $\mu\in \MR$. $f$ is an isomorphism if $E\to F$ is an isomorphism of complex vector spaces. $f$ is a monomorphism if $E\to F$ is a monomorphism of complex vector spaces. $f$ is an epimorphism if $E\to F$ is an epimorphism of complex vector spaces and $f(E^{i}(\mu))=F^{i}(\mu)$ for all $i\in \Si^{(1)}$ and $\mu\in \MR$.
\end{defi}

Following the proofs in Section 4, we can prove:

\begin{theo} \label{main2}
    The category $\SKVB_{X}$ of smooth Klyachko vector bundles over the topological toric manifold $X=X(\De)$ is equivalent to the category $\SKVS_{X}$. 
\end{theo}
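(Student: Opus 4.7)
The plan is to mirror the proof of Theorem~\ref{main} verbatim, with continuous characters replaced by smooth characters, the partial order $\geq_{c}$ replaced by $\geq_{s}$, Lemma~\ref{cext} replaced by Lemma~\ref{smext}, and Lemma~\ref{indext} replaced by Lemma~\ref{indext2}. Since all three auxiliary results for the smooth setting are already in hand, no new obstruction arises; the argument is a bookkeeping exercise once one checks that the smoothness criterion for extendability of transition functions is exactly the $\geq_{s}$ compatibility condition encoded in the definition of $\SKVS_{X}$.

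To construct the functor $\Phi\colon \SKVB_{X}\to \SKVS_{X}$, I would fix a basepoint $x_{0}\in \MO(\emptyset)$ and, for a smooth Klyachko bundle $\ME$, identify all fibers with $E:=\ME(x_{0})$ using smooth equivariant trivializations on each $U_{I}$, normalized so that the transition cocycles $f_{IJ}$ satisfy $f_{IJ}(x_{0})=\1$. The smooth $\T_{I}$-action on $\ME(\gamma_{I})\simeq E$ extends, via Lemma~\ref{split}, to a smooth $\T$-representation $\varphi_{I}\colon \T\to \Aut(E)$, and Lemma~\ref{character} gives a weight decomposition $E=\bigoplus_{[\chi]\in \MR^{n}/\bp}E_{[\chi]}$. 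For $i\in I$ and $\mu\in \MR$, set $E^{i,I}(\mu)=\sum_{\langle \chi,\beta_{i}\rangle \geq_{s}\mu}E_{[\chi]}$. The key independence statement $E^{i,I}(\mu)=E^{i,J}(\mu)$ for $i\in I\cap J$ follows because $f_{IJ}(tx_{0})=\varphi_{I}(t)\varphi_{J}(t)^{-1}$ extends smoothly to $U_{I}\cap U_{J}$, and Lemma~\ref{smext} applied entrywise to diagonalizing weight bases of $\varphi_{I}$ and $\varphi_{J}$ forces $\langle \chi_{i},\beta_{k}\rangle \geq_{s} \langle \psi_{j},\beta_{k}\rangle$ for all $k\in I\cap J$ whenever the matrix entry $a_{ij}\ne 0$; the reverse inclusion uses smooth extendability of $f_{JI}$. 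Writing $E^{i}(\mu)$ for the common value, set $\Phi(\ME)=(E,\{E^{i}(\mu)\})$; on morphisms define $\Phi(f)=f|_{x_{0}}\colon E\to F$, which preserves the filtrations by equivariance.

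For the inverse $\Psi\colon \SKVS_{X}\to \SKVB_{X}$: given $(E,\{E^{i}(\mu)\})$ and the compatibility gradings for each $I\in \Si^{(n)}$, the $\MR^{n}/\bp$-grading determines a smooth $\T_{I}$-representation, which extends to a smooth representation $\varphi_{I}$ of $\T$ by Lemma~\ref{split}. The hypothesis $E^{i}(\mu)=\sum_{\langle \chi,\beta_{i}\rangle \geq_{s}\mu}E_{[\chi]}$ for all $i\in I$ together with the analogous identity for $J$ guarantees, by the same entrywise use of Lemma~\ref{smext}, that $f_{IJ}(tx_{0}):=\varphi_{I}(t)\varphi_{J}(t)^{-1}$ extends smoothly from the open orbit to $U_{I}\cap U_{J}$. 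These cocycles glue to a smooth Klyachko bundle $\Psi(E,\{E^{i}(\mu)\})$; Lemma~\ref{indext2}(i) shows that the local bundle over $U_{I}$ is independent of the extension of $\varphi_{I}$, and independence from the choice of a particular compatible grading is identical to the argument in Theorem~\ref{main}. On morphisms, Lemma~\ref{indext2}(i) converts a filtration-preserving linear map into a smooth equivariant map $U_{I}\times E\to U_{I}\times F$ for each top-dimensional $I$, and these agree on overlaps since they already agree on the dense open orbit, hence glue globally.

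Finally, the natural isomorphisms $\Psi\circ \Phi\simeq \mathrm{id}_{\SKVB_{X}}$ and $\Phi\circ \Psi\simeq \mathrm{id}_{\SKVS_{X}}$ follow from the same trivial comparison as in the proof of Theorem~\ref{main}: on the $\Psi\Phi$ side, the reconstructed cocycle from $\varphi_{I}$ reproduces the original trivializations up to the normalization $f_{IJ}(x_{0})=\1$, and on the $\Phi\Psi$ side the weight decomposition read off at $\gamma_{I}$ recovers the prescribed grading. The only genuinely new analytic input beyond the topological case is Lemma~\ref{ext2} underlying Lemma~\ref{smext}, and all use of it is already packaged in the lemmas; I expect no obstacles beyond carefully invoking the smooth analogs at each step where the topological proof invoked the continuous ones.
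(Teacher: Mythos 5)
Your proposal is correct and follows exactly the route the paper takes: the paper's proof of Theorem~\ref{main2} consists precisely of repeating the argument for Theorem~\ref{main} with $\geq_{c}$, Lemma~\ref{cext}, and Lemma~\ref{indext} replaced by $\geq_{s}$, Lemma~\ref{smext}, and Lemma~\ref{indext2}. Your write-up simply makes explicit the substitutions the paper leaves implicit.
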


\begin{coro}
    Let $\Phi:\SKVB_{X}\to \SKVS_{X}$ be an equivalence in the proof of Theorem ~\ref{main}, then a morphism $f:\ME\to \MF$ in $\SKVB_{X}$ is a monomorphism (resp.~epimorphism, resp.~isomorphism) if and only if $\Phi(f)$ is a monomorphism (resp.~epimorphism, resp.~isomorphism). 
\end{coro}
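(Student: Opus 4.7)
The plan is to invoke the equivalence $\Phi: \SKVB_X \to \SKVS_X$ established in Theorem \ref{main2}, where $\Phi(f) = f|_{x_0}$ is the restriction of $f$ to the fiber over a fixed generic point $x_0$ in the open dense orbit $\MO(\emptyset)$, and $\Psi$ denotes its quasi-inverse. Because $\Phi$ is an equivalence of categories, it automatically preserves and reflects all categorical notions, so the corollary reduces to verifying that the concrete conditions in Def \ref{SKVS} on the $\SKVS_X$ side, together with the natural fiberwise conditions on the $\SKVB_X$ side, are the correct categorical descriptions of mono, epi, and iso in each category.

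For the isomorphism case, the forward direction is immediate: a bundle iso $f$ restricts to a linear iso $f|_{x_0}$. For the converse, observe that if $\varphi = \Phi(f)$ is an iso in $\SKVS_X$ then $\varphi(E^i(\mu)) = F^i(\mu)$ for all $i, \mu$ (the filtration equality is forced by $\varphi^{-1}$ being required to be a $\SKVS_X$-morphism); hence $\Psi(\varphi^{-1})$ provides an inverse bundle morphism to $f$, making $f$ a bundle iso.

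For monomorphisms, the forward direction follows by taking the fiber at $x_0$. For the converse, if $\varphi = \Phi(f)$ is injective as a linear map, apply $\Psi$ to reconstruct $f$ and then invoke Lemma \ref{indext2} chart by chart: on each $U_I$, the map $\varphi$ decomposes according to the $\T_I$-isotypical decomposition $E=\bigoplus_{[\chi]\in \MR^n/\bp}E_{[\chi]}$, and injectivity in each component together with the explicit form of $f$ in the proof of Theorem \ref{main} yields fiberwise injectivity of $f$ throughout $U_I$, including at the distinguished fixed point $\gamma_I$.

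For epimorphisms, the forward direction is more subtle: fiberwise surjectivity of $f$ immediately yields surjectivity of $\Phi(f)$, while the filtration equality $\Phi(f)(E^i(\mu)) = F^i(\mu)$ follows from the $\T_I$-equivariance of $f|_{\gamma_I}$, which forces each $[\chi]$-isotypical component of $\ME(\gamma_I)$ to surject onto its counterpart in $\MF(\gamma_I)$. The converse reconstructs $f$ from $\varphi$ via $\Psi$ and verifies fiberwise surjectivity chart-by-chart, again via the isotypical decompositions. The main obstacle is precisely this epimorphism case: translating back and forth between the filtration equality and fiberwise surjectivity at the non-generic fibers requires careful bookkeeping of the $\T_I$-isotypical components at each $\gamma_I$, as the filtration equality is the ingredient that encodes surjectivity away from the open dense orbit.
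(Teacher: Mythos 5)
The paper states this corollary without proof, so I can only assess your argument on its own terms; it has a genuine error. The step that fails is the converse direction for monomorphisms, where you claim that injectivity of $\varphi=\Phi(f)$ together with the explicit form of $f$ in the proof of Theorem \ref{main} ``yields fiberwise injectivity of $f$ throughout $U_I$, including at the distinguished fixed point $\gamma_I$.'' This is false. In the rank-one computation of Lemma \ref{indext2} the bundle map over $U_I$ is $f(tx_{0},e)=(tx_{0},\chi_{E}^{-1}\chi_{F}(t)\varphi(e))$, and $\chi_{E}^{-1}\chi_{F}$ extends smoothly to $U_I$ precisely when $\langle \chi_{F}-\chi_{E},\beta_{i}\rangle \geq_{s} 0$ for all $i\in I$; whenever this inequality is strict for some $i$, the extended function vanishes on $\{z_{i}=0\}$, so $f$ is identically zero on the fiber over $\gamma_{I}$ even though $\varphi$ is a linear isomorphism (the toric prototype is the canonical inclusion of the trivial line bundle into the line bundle of an invariant divisor). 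So injectivity of $\Phi(f)$ does not give fiberwise injectivity of $f$ off the open orbit, and your argument asserts a false intermediate statement.

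This points to the real content of the corollary, which your opening paragraph correctly identifies but then does not carry out: one must fix what monomorphism, epimorphism and isomorphism mean on the bundle side and match them against the explicitly non-categorical definitions in Definition \ref{SKVS}. For the statement to hold, ``monomorphism'' in $\SKVB_{X}$ must be the categorical notion --- equivalently, injectivity of $f$ on fibers over the dense orbit, from which $f\circ g=0\Rightarrow g=0$ follows by density and continuity --- and not fiberwise injectivity everywhere; the correct converse argument is this density argument, not the chart-by-chart injectivity you propose. ``Epimorphism'' must mean fiberwise surjectivity everywhere, which is exactly what the extra condition $\varphi(E^{i}(\mu))=F^{i}(\mu)$ encodes at the fibers over the $\gamma_{I}$ via the $\T_{I}$-isotypical decomposition; your paragraph on this case has the right mechanism but stops at a sketch. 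Finally, your isomorphism argument quietly strengthens the paper's stated definition (``$E\to F$ is an isomorphism of complex vector spaces'') to the categorical one in which $\varphi^{-1}$ is also required to be a morphism; without that strengthening the same example above defeats the isomorphism case as well, so this discrepancy needs to be stated and resolved rather than assumed away.
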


In the next section, we will consider a canonical exact sequence constructed in \cite{IFM12} of equivariant complex vector bundles where the morphisms are $\R$-linear in the fibers. This motivates to define the following categories.

\begin{defi} \label{SKVB-real}
    For a topological toric manifold $X$, the category $\widetilde{\SKVB_{X}}$ is the category whose objects are smooth Klyachko vector bundles over $X$ and whose morphisms are morphisms of equivariant complex vector bundles that are $\R$-linear in fibers.
\end{defi}

\begin{defi} \label{SKVS-real}
For a topological toric manifold $X$, the category $\widetilde{\SKVS_{X}}$ is the category having the same objects as $\SKVS_{X}$. 
    A morphism $f:(E,\{E^{i}(\mu)\})\to (F,\{F^{i}(\mu)\})$ in $\widetilde{\SKVS_{X}}$ is an $\R$-linear map $f:E\to F$ such that $f(E^{i}(\mu))\subset F^{i}(\mu)$ for all $i\in \Si^{(1)}$ and $\mu\in \MR$. $f$ is an isomorphism if $E\to F$ is an isomorphism of real vector spaces. $f$ is a monomorphism if $E\to F$ is a monomorphism of real vector spaces. $f$ is an epimorphism if $E\to F$ is an epimorphism of real vector spaces and $f(E^{i}(\mu))=F^{i}(\mu)$ for all $i\in \Si^{(1)}$ and $\mu\in \MR$.
\end{defi}

Below is a modified version of Theorem ~\ref{main2}. 
\begin{theo} \label{main3}
    The category $\widetilde{\SKVB_{X}}$ is equivalent to the category $\widetilde{\SKVS_{X}}$.
\end{theo}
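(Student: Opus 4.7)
The plan is to adapt the proof of Theorem \ref{main2} by using the canonical decomposition of $\R$-linear maps between complex vector spaces. Every $\R$-linear map $f \colon V \to W$ between complex vector spaces splits uniquely as $f = f_+ + f_-$, where $f_+(v) = \tfrac{1}{2}(f(v) - i\,f(iv))$ is $\C$-linear and $f_-(v) = \tfrac{1}{2}(f(v) + i\,f(iv))$ is conjugate $\C$-linear. Applied fiberwise to an $\R$-linear equivariant bundle morphism $f \colon \ME \to \MF$, this decomposition is natural and preserves $\T$-equivariance, because $\T$ acts $\C$-linearly on fibers. Applied on the Klyachko data side to a filtration-preserving $\R$-linear $f \colon E \to F$, both $f_+$ and $f_-$ individually preserve the filtration, since each $E^{i}(\mu)$ is a complex subspace (hence invariant under multiplication by $i$). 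This reduces Theorem \ref{main3} to the conjunction of the $\C$-linear statement (Theorem \ref{main2}) and an antilinear analogue.

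To handle the antilinear piece, I would introduce the conjugate smooth Klyachko vector bundle $\overline{\ME}$, which has the same underlying smooth real $\T$-equivariant bundle as $\ME$ but with the fiberwise complex structure reversed. The $\T$-action is $\C$-linear with respect to this new complex structure, so $\overline{\ME}$ is again an object of $\SKVB_{X}$, with the $\T_I$-weights on $\overline{\ME}(\gamma_I)$ being the conjugates of those on $\ME(\gamma_I)$. Under this identification, an antilinear equivariant morphism $\ME \to \MF$ corresponds to a $\C$-linear equivariant morphism $\overline{\ME} \to \MF$, to which Theorem \ref{main2} applies directly. Combining this with Theorem \ref{main2} for the $\C$-linear part, the functors $\widetilde{\Phi}\colon \widetilde{\SKVB_{X}} \to \widetilde{\SKVS_{X}}$ and its quasi-inverse $\widetilde{\Psi}$ are built by decomposing each $\R$-linear morphism into $f_+ + f_-$, applying the functors from Theorem \ref{main2} to $f_+$ (via $(\ME,\MF)$) and to $f_-$ (via $(\overline{\ME},\MF)$), and summing. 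Essential surjectivity is immediate from Theorem \ref{main2}; fullness and faithfulness follow from the uniqueness of the decomposition together with two applications of Theorem \ref{main2}.

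The main obstacle is the antilinear analogue of Lemma \ref{indext2}. A direct calculation of $\rho_F^{I}(t)\circ f_- \circ \rho_E^{I}(t)^{-1}$ on the dense orbit gives, on the block $E_{[\chi]}\to F_{[\psi]}$, the scalar character $\psi\,\overline{\chi}^{-1}$, whose smooth extendability over $U_I$ by Lemma \ref{smext} involves the conjugate weights $\overline{\alpha_\chi}$ rather than $\alpha_\chi$; moreover equivariance forces any nonzero component of $f_-$ to map $E_{[\chi]}$ into $F_{[\overline{\chi}]}$. Reconciling this with the filtration condition of Definition \ref{SKVS-real} is precisely what passage to the conjugate bundle $\overline{\ME}$ accomplishes, since the Klyachko filtration of $\overline{\ME}$ is indexed by the conjugated characters and the weight-support condition on $f_-$ becomes the ordinary weight-support condition for the associated $\C$-linear map $\overline{\ME}\to\MF$. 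Once this identification is set up carefully, the existing proof of Theorem \ref{main2} transports the needed extendability-equals-filtration-preservation statement to the antilinear side and the rest of the argument goes through verbatim.
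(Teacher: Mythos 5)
Your framework---splitting a fiberwise $\R$-linear morphism as $f=f_++f_-$ and trading the antilinear part for a $\C$-linear morphism out of the conjugate bundle $\overline{\ME}$---is the right machinery, and you have correctly located where the difficulty lives: on the dense orbit the block $E_{[\chi]}\to F_{[\psi]}$ of $f_-$ transforms by $\psi\,\overline{\chi}^{-1}$, so its extendability over $U_I$ is governed by $\langle \alpha_\psi-\overline{\alpha_\chi},\beta_i\rangle\geq_s 0$ rather than by $\langle \alpha_\psi-\alpha_\chi,\beta_i\rangle\geq_s 0$. (The paper offers no written proof of Theorem \ref{main3}, only the remark that it is a ``modified version'' of Theorem \ref{main2}, so there is nothing to compare against line by line.) The gap is your final claim that passing to $\overline{\ME}$ ``reconciles'' this with Definition \ref{SKVS-real}: it does not, it merely relabels it. Theorem \ref{main2} applied to $\overline{\ME}\to\MF$ forces $f_-$ to carry $\overline{E}^{\,i}(\mu)=\sum_{\langle\overline{\alpha_\chi},\beta_i\rangle\geq_s\mu}E_{[\chi]}$ into $F^{i}(\mu)$, whereas Definition \ref{SKVS-real} asks $f_-$ to carry $E^{i}(\mu)=\sum_{\langle\alpha_\chi,\beta_i\rangle\geq_s\mu}E_{[\chi]}$ into $F^{i}(\mu)$. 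Because the product on $\MR$ is noncommutative, $\langle\overline{\alpha},\beta_i\rangle$ is not determined by $\langle\alpha,\beta_i\rangle$, so these are genuinely different families of subspaces of $E$ and the two conditions on $f_-$ are inequivalent. (Also, your parenthetical assertion that equivariance forces $f_-$ to map $E_{[\chi]}$ into $F_{[\overline{\chi}]}$ is only true over the fixed point $\gamma_I$; over a generic point the only constraint is extendability of $\psi\overline{\chi}^{-1}$.)

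The discrepancy is not cosmetic. Take $n=1$, $\Si=\{\emptyset,\{1\}\}$, $\beta_1=\1$, so $X=\C$ with the standard $\CS$-action, and let $\ME=\C\times\C$ with $g\cdot(z,e)=(gz,ge)$, i.e.\ $\alpha_\chi=\1$. The filtration $E^{1}(\mu)$ takes only the values $0$ and $E$, so \emph{every} $\R$-linear endomorphism of $E=\C$ is a morphism in $\widetilde{\SKVS_{X}}$ and $\Hom_{\widetilde{\SKVS_{X}}}(E,E)\cong\R^{4}$. But for an equivariant bundle endomorphism the antilinear coefficient must transform by $\chi(g)\overline{\chi(g)}^{-1}=g/\overline{g}=g^{\1-\overline{\1}}=g^{(0,2)}$, which has no limit as $g\to 0$ (indeed $\1-\overline{\1}=(0,2)\not\geq_s 0$), so the antilinear part vanishes and $\Hom_{\widetilde{\SKVB_{X}}}(\ME,\ME)\cong\C$. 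Hence the natural functor is not full, and your argument, carried out as written, proves an equivalence of $\widetilde{\SKVB_{X}}$ with the category whose antilinear morphism components preserve the \emph{conjugated} filtration $\overline{E}^{\,i}(\mu)$---not with $\widetilde{\SKVS_{X}}$ as defined in Definition \ref{SKVS-real}. To close the gap you must either amend the definition of morphisms in $\widetilde{\SKVS_{X}}$ (imposing the $\overline{E}^{\,i}(\mu)$-condition on $f_-$), or isolate and prove a hypothesis under which $\langle\alpha_\psi-\overline{\alpha_\chi},\beta_i\rangle\geq_s0$ and $\langle\alpha_\psi-\alpha_\chi,\beta_i\rangle\geq_s0$ select the same components; as stated, the proof cannot be completed.
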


{\bf Remark:} The morphisms in the category $\widetilde{\SKVB_{X}}$ are essentially morphisms between equivariant real vector bundles. We stick to complex vector bundles to guarantee that the fiber $E$ over $x_{0}$ can be decomposed into eigenspaces. In general, this decomposition may not exist if we consider real Klyachko vector bundles. 

\section{Applications and examples}
\subsection{A canonical exact sequence}
In ~\cite{IFM12}, the authors constructed a family of equivariant complex line bundles $L_{i}, 1\leq i \leq m,$ over $X(\De)$ and prove the following theorem.

\begin{theo} [Theorem 6.5 of \cite{IFM12}] \label{canonical}
    There is a canonical short exact sequence of $(\CS)^{n}$-equivariant complex vector bundles 
    \[0\to X(\De)\times \Lie(\Ker\lambda)\to \bigoplus_{i=1}^{m} L_{i}\stackrel{f}\longrightarrow  \tau X(\De)\to 0 \]
    where  $(\CS)^{n}$ acts on the fibers of $X(\De)\times \Lie(\Ker\lambda)$ trivially and $\tau X(\De)$ denotes the tangent bundle. 
\end{theo}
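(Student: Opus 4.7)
The plan is to apply Theorem~\ref{main3} and reduce the exactness of this sequence of $\T$-equivariant complex vector bundles with $\R$-linear morphisms to a combinatorial short-exact-sequence statement in $\widetilde{\SKVS_{X}}$, then verify the latter fiberwise at each distinguished point $\gamma_{I}$ for $I\in\Si^{(n)}$.

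First I would check that each of the three terms is a smooth Klyachko vector bundle. The trivial bundle $X(\De)\times\Lie(\Ker\la)$ is obvious. The tangent bundle is equivariantly trivialized over each $U_{I}$ by the smooth chart $\varphi_{I}$ of Theorem~\ref{chart}, identifying $\tau X(\De)|_{\gamma_{I}}$ with $\bigoplus_{i\in I}V(\chi^{\alpha^{I}_{i}})$. Each line bundle $L_{i}$ is the associated bundle $U(\Si)\times_{\Ker\la}\C_{\chi_{i}}$ where $\chi_{i}$ is the $i$-th coordinate character of $\Ker\la\subset (\CS)^{m}$, and the same quotient construction trivializes $L_{i}$ equivariantly over each $U_{I}$.

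Next I would compute the Klyachko data at $\gamma_{I}$ for each maximal $I\in\Si^{(n)}$. Via $\varphi_{I}$, the $i$-th summand of $\tau X(\De)|_{\gamma_{I}}$ has $\T_{I}$-weight $[\alpha^{I}_{i}]\in\MR^{n}/\bp$. On $X(\De)\times\Lie(\Ker\la)$ the $\T_{I}$-action on the fiber is trivial, so the fiber sits entirely in weight $[\mathbf{0}]$. For each $L_{i}$, lifting $\chi_{i}$ to the $i$-th coordinate character of $(\CS)^{m}$ and restricting to the stabilizer $(\CS)^{I}\times\{1\}^{\Si^{(1)}\setminus I}$ of $\gamma_{I}$ shows that $(L_{i})_{\gamma_{I}}$ has weight $[\alpha^{I}_{i}]$ when $i\in I$ and weight $[\mathbf{0}]$ when $i\notin I$. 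The map $f$ at $\gamma_{I}$ then restricts to a weight-preserving isomorphism $(L_{i})_{\gamma_{I}}\to V(\chi^{\alpha^{I}_{i}})$ for $i\in I$ and is zero on $(L_{i})_{\gamma_{I}}$ for $i\notin I$; the first map identifies $\Lie(\Ker\la)$, of dimension $m-n$, with $\bigoplus_{i\notin I}(L_{i})_{\gamma_{I}}$, matching both dimensions and weights.

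With this data in hand, exactness in $\widetilde{\SKVS_{X}}$ is immediate from Definition~\ref{SKVS-real}. The first map is $\R$-linearly injective on the underlying space, hence a monomorphism. The composition is zero fiberwise. The map $f$ is $\R$-linearly surjective, and because at each $\gamma_{I}$ it sends the weight-$[\alpha^{I}_{i}]$ summand of the middle fiber isomorphically onto the corresponding summand of $\tau X(\De)|_{\gamma_{I}}$, the filtration-matching equality $f(E^{i}(\mu))=F^{i}(\mu)$ holds for every $i\in I$ and $\mu\in\MR$; varying $I$ over the maximal cones containing a given ray covers all $i\in\Si^{(1)}$, so $f$ is an epimorphism. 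Image equals kernel on underlying spaces by dimension count and weight matching. Theorem~\ref{main3} then transports the short exact sequence back into $\widetilde{\SKVB_{X}}$, yielding the desired exact sequence. The main obstacle is the explicit identification of the $\T_{I}$-weight of $(L_{i})_{\gamma_{I}}$ for $i\notin I$: one must carefully unpack the associated-bundle construction to see that the stabilizer action on the fiber factors through the trivial character, so that the trivial bundle can inject weight-compatibly into the kernel of $f$. Once this weight bookkeeping is settled, the remainder is elementary linear algebra in $\widetilde{\SKVS_{X}}$.
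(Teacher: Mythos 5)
Your overall strategy is the same as the paper's: reduce exactness to the combinatorial category via Theorem~\ref{main3}, compute the $\MR^{n}/\bp$-gradings of the three bundles (your weight computations for $L_{i}$, the trivial bundle and $\tau X(\De)$ agree with the paper's posets $P$, $P_{c}$, $P_{\tau}$), and then check exactness of the resulting poset maps. However, there is a genuine gap at the heart of the argument: you \emph{assert}, but never verify, the two facts that carry all of the analytic content, namely (a) that $f$ restricts to an isomorphism from the weight-$[\alpha^{I}_{i}]$ summand $(L_{i})_{\gamma_{I}}$ onto $V(\chi^{\alpha^{I}_{i}})\subset \tau X(\De)|_{\gamma_{I}}$ for $i\in I$ and vanishes on $(L_{i})_{\gamma_{I}}$ for $i\notin I$, and (b) that the infinitesimal-action map $\Lie(\Ker\la)\to\bigoplus_{i\notin I}(L_{i})_{\gamma_{I}}$ is an isomorphism. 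Neither follows from ``weight bookkeeping'': since $f$ is only $\R$-linear in fibers, equivariance constrains but does not determine it (an equivariant $\R$-linear map can also mix a weight $\chi$ with its conjugate, and could a priori be zero on a given summand). To establish (a) one must actually differentiate the chart components $u_{j}(z)=z_{j}\prod_{l\notin I}z_{l}^{\langle\alpha^{I}_{j},\beta_{l}\rangle}$, which are smooth but not holomorphic, so the differential has both $\partial/\partial z$ and $\partial/\partial\OL{z}$ parts; and to establish (b) one must solve the linearization of the equations \eqref{aibk} defining $\Ker\la$ and check that the projection of $\Lie(\Ker\la)$ onto the coordinates indexed by $I^{c}$ is bijective (equivalently $\Lie(\Ker\la)\cap(\C^{I}\times 0)=0$). ``Matching dimensions and weights'' does not rule out a kernel here. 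This computation of the real Jacobian $J_{I}$ and of $\Lie(\Ker\la)$ is precisely what the paper's proof supplies and what your proposal omits.

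A second, more minor issue: the equivalence $\Phi$ of Theorem~\ref{main3} evaluates a morphism at the base point $x_{0}$ in the open dense orbit, not at the fixed point $\gamma_{I}$. The map $f_{x_{0}}$ is given by the full matrix $J_{I}(x_{0})$, which has nontrivial off-diagonal blocks $(b_{jk}),(c_{jk}),(v_{jk})$ and a kernel ($\Lie(\Ker\la)$, a non-coordinate and in general non-complex subspace of $\C^{m}$) that differs from the kernel $0^{I}\times\C^{I^{c}}$ of the degenerate map $f_{\gamma_{I}}$ you describe. Exactness at $\gamma_{I}$ does imply exactness at $x_{0}$ (semicontinuity of rank plus the fact that the composite vanishes and a dimension count), but this passage needs to be said, and the filtration equality $f(E^{k}(\mu))=T^{k}(\mu)$ must be checked for $f_{x_{0}}$, not for its associated graded at $\gamma_{I}$. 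If you repair these two points --- carry out the differential computation at least at $\gamma_{I}$, where it is simpler than the paper's computation at $x_{0}$ because $u_{j}(\gamma_{I})=0$ kills the off-diagonal terms, and then justify the transfer to $x_{0}$ --- your argument becomes a legitimate, slightly lighter variant of the paper's proof.
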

{\bf Remark:} $f$ is a map between equivariant complex vector bundles that is only $\R$-linear in the fibers, which makes the computation of the fiber maps below very complicated. \\
$ $\\
In this section, we will recall the construction of $L_{i}$, prove that $X(\De)\times \Lie(\Ker\lambda)$, $ \bigoplus_{i=1}^{m} L_{i}, \tau X(\De)$ are all smooth Klyachko vector bundles and give another proof of the exactness of the sequence in Theorem ~\ref{canonical} using the poset data of these bundles.\\
$ $\\
{\bf Construction of $L_{i}$:} Let $\pi_{i}:(\CS)^{m}\to \CS$ be the projection onto the $i$-th factor of $(\CS)^{m}$ and denote by $V(\pi_{i})$ the complex one-dimensional representation space of $\pi_{i}$. Let $(\CS)^{m}$ act on $U(\Si)\times V(\pi_{i})$ by 
\[(g_{1},\cdots,g_{m})((z_1,\cdots,z_{m}),w)=((g_1z_1,\cdots,g_mz_m),g_{i}w) \]
for $(g_1,\cdots,g_m)\in (\CS)^{m}, (z_1,\cdots,z_m)\in U(\Si), w\in V(\pi_{i})$. The $(\CS)^{m}$-equivariant complex line bundle $L_{i}$ is 
\[L_{i}: (U(\Si)\times V(\pi_{i}))/\Ker \lambda\to U(\Si)/\Ker\lambda=X(\De).\]
\noindent 
{\bf Equivariant trivializations:}
$X(\De)=\bigcup_{I\in \Si^{(n)}}U_{I}$. For each $I\in \Si^{(n)}$, on $U_{I}$, depending on whether $i\in I$, we have two types of equivariant trivializations.\\
When $i\in I$, the equivariant trivialization \[\psi_{I}:(U(I)\times V(\pi_{i}))/\Ker\lambda\to U_{I}\times \C\] is given by 
\begin{equation}\label{tri1}
  [(z,w)]\mapsto ([z],w\cdot \prod_{k\notin I}z_{k}^{\langle \alpha^{I}_{i},\beta_{k}\rangle}).
\end{equation}
When $i\notin I$, the equivariant trivialization \[\psi_{I}:(U(I)\times V(\pi_{i}))/\Ker\lambda\to U_{I}\times \C\] is given by 
\begin{equation}\label{tri2}
     [(z,w)]\mapsto ([z],\frac{w}{z_{i}}).
\end{equation}\\
{\bf Injectivity of $\psi_{I}$:}
Suppose $\psi_{I}([z,w])=\psi_{I}([z',w'])$. \\
Case 1: $i\in I. \,[z]=[z'], w\cdot \prod_{k\notin I}z_{k}^{\langle \alpha^{I}_{i},\beta_{k}\rangle}=w'\cdot \prod_{k\notin I}z_{k}^{\prime \langle \alpha^{I}_{i},\beta_{k}\rangle}$, therefore $z_{k}'=hz_{k}$ for some $h\in \Ker\lambda$ and $w'=w\prod_{k\notin I}h^{\langle \alpha^{I}_{i},\beta_{k}\rangle}=w\cdot h$ where the last equality follow from ~\ref{kerl}. Hence $[z,w]=[z',w']$.\\
$ $\\
Case 2: $i\notin I. \, [z]=[z'],\frac{w}{z_{i}}=\frac{w'}{z'_{i}}$, therefore $z_{k}'=z_{k}$ for some $h\in \Ker\lambda$ and $w'=\frac{z_{i}'}{z_{i}}w=w\cdot h$. Hence $[z,w]=[z',w']$.\\

\noindent
{\bf Surjectivity of $\psi_{I}$:}
Suppose $(z,c)\in U_{I}\times \C$.\\
Case 1: $i\in I$. Let $w=c\cdot \prod_{k\notin I}z_{k}^{-\langle \alpha^{I}_{i},\beta_{k}\rangle}$, then $\psi_{I}([z,w])=(z,c)$.\\
$ $\\
Case 2: $i\notin I$. Let $w=c\cdot z_{i}$, then $\psi_{I}([z,w])=(z,c)$.\\

\noindent
{\bf Equivariance of $\psi_{I}:$}\\
Case 1: $i\in I$. Let $\T$ act on $U_{I}\times \C$ via 
\[g\cdot (z,c)=(g\cdot z,\chi^{\alpha^{I}_{i}}(g)c), g\in \T, (z,c)\in U_{I}\times \C. \]
Given $g\in \T, ([z],c)\in U_{I}\times \C,$ let $h=(h_1,\cdots,h_m)\in (\CS)^{m}$ where 
\[h_k=\begin{cases} \chi^{\alpha^I_k}(g) \quad&\text{for $k\in I$},\\
1 \quad&\text{for $k\notin I$.}\end{cases}\]
Then $\lambda(h)=g$. 

\begin{align*}
\psi_{I}(g\cdot[z,w])
&= \psi_{I}([(h_{k}z_{k})_{k\in I},h_{i}w])\\
&= (g\cdot [z],h_{i}w\prod_{k\notin I}(h_{k}z_{k})^{\langle \alpha^{I}_{i},\beta_{k}\rangle})\\
&= (g\cdot [z],h_{i}w\prod_{k\notin I}z_{k}^{\langle \alpha^{I}_{i},\beta_{k}\rangle})\\
&= g\cdot \psi_{I}([z,w]).
\end{align*}
Case 2: $i\notin I$. Let $\T$ act on $U_{I}\times \C$ via 
\[g\cdot ([z],c)=([g\cdot z],c), g\in \T, (z,c)\in U_{I}\times \C. \]
Given $g\in \T, (z,c)\in U_{I}\times \C,$ choose $h$ as above,
\begin{align*}
\psi_{I}(g\cdot[z,w])
&= \psi_{I}([(h_{k}z_{k})_{k\in I},h_{i}w])\\
&= (g\cdot [z],\frac{h_{i}w}{h_{i}z_{i}})\\
&= (g\cdot [z],\frac{w}{z_{i}})\\
&= g\cdot \psi_{I}([z,w]).
\end{align*}
\\
\noindent
{\bf Transition functions $g_{IJ}$ of $L_{i}$:}
Given $[z]\in U_{I}\bigcap U_{J} ,I,J\in \Si^{(n)}$, there are three cases: \\
Case 1: $i\in I\bigcap J$. \[\psi_{I} \circ\psi_{J}^{-1}([z],c)=([z],c\prod_{k\notin I}z_{k}^{\langle \alpha^{I}_{i}-\alpha^{J}_{i},\beta_{k} \rangle}),\]
therefore \[ g_{IJ}([z])=\prod_{k\notin I}z_{k}^{\langle \alpha^{I}_{i}-\alpha^{J}_{i},\beta_{k} \rangle}. \]\\
Case 2: $i\in I, i\notin J$. \[\psi_{I} \circ\psi_{J}^{-1}([z],c)=([z], \frac{c}{z_{i}\prod_{k\notin I}z_{k}^{\langle \alpha^{I}_{i},\beta_{k}\rangle}})=([z],\frac{c}{\varphi_{I}([z])}). \] 
Therefore \[ g_{IJ}([z])=\varphi^{-1}_{I}([z]). \] \\
Case 3: $i\notin I, i\in J$. This is essentially the same as case 2. \\
Hence $L_{i}$ is a smooth Klyachko vector bundle.\\
$ $\\
\noindent
{\bf Gradings of $L_{i}$:}
Let $x_{0}$ be the identity of $\T\subset X(\De)$ and $E_{i}\simeq \C$ the fiber of $L_{i}$ over $x_{0}$. For $I\in \Si^{(n)}$, 
\[ E_{i}=\bigoplus_{[\chi]\in \MR^{n}/\bp}{E_{i}}_{[\chi]}=\begin{cases}
    {E_{i}}_{[\chi^{\alpha^{I}_{i}}]} \quad i\in I,\\
    {E_{i}}_{[\chi^{0}]} \quad i\notin I.
\end{cases} \]\\
$ $\\
\noindent
{\bf Poset $P_{i}$ associated with $L_{i}$:}\\
For $I\in \Si^{(n)}$ containing $i$ and $k\in I$:
\[E_{i}^{k}(\mu)=\sum_{\langle \chi,\beta_{k}\rangle \geq_{s}\mu}{E_{i}}_{[\chi]}=\begin{cases}
    E_{i}, \quad \langle \alpha^{I}_{i},\beta_{k}\rangle=\delta_{ik}\1 \geq_{s} \mu,\\
    0, \quad \text{otherwise}.
\end{cases} \]
For $I\in \Si^{(n)}$ not containing $i$ and $k\in I$:
\[E_{i}^{k}(\mu)=\sum_{\langle \chi,\beta_{k}\rangle \geq_{s}\mu}{E_{i}}_{[\chi]}=\begin{cases}
    E_{i}, \quad 0 \geq_{s} \mu,\\
    0, \quad \text{otherwise}.
\end{cases} \]
\noindent
{\bf Poset $P$ associated with $\bigoplus_{i=1}^{m}L_{i}$:}\\
Let $E=\bigoplus_{i=1}^{m}E_{i}\simeq \C^{m}$ be the fiber of $\bigoplus_{i=1}^{m}L_{i}$ over $x_{0}$. For $I\in \Si^{(n)}, k\in I$, 
\[ E^{k}(\mu)=\bigoplus_{i=1}^{m}E^{k}_{i}(\mu)=\begin{cases}
   E, \quad &0\geq_{s}\mu \\
   E_{k}, \quad &1\geq_{s}\mu, 0\ngeq_{s} \mu\\
   0, &1\ngeq_{s} \mu, 0\ngeq_{s}\mu
\end{cases} \]

$ $\\
\noindent
{\bf Poset $P_{c}$ associated with $X(\De)\times F$ with $\T$ acting trivially on $F$:}\\
$F=F_{[\chi^{0}]}$ is a grading of the $\T$-module $F$. \\
For $I\in \Si^{(n)},k\in I$, 
\[F^{k}(\mu)=\begin{cases}
    F, \quad 0\geq_{s}\mu,\\
    0, \quad \text{otherwise}.
\end{cases} \]
$ $\\
\noindent
{\bf Poset $P_{\tau}$ associated with $\tau X(\De)$:}
Let $T\simeq \C^{n}$ be the fiber of $\tau X(\De)$ over $x_{0}$.\\
For $I\in \Si^{(n)}$, via the local chart 
\[\varphi_{I}:U_{I}\to \bigoplus_{i\in I}V(\chi^{\alpha^{I}_{i}})=\C^{I}, \]
we get the isomorphism of equivariant vector bundles
\begin{center}
\begin{tikzcd}
    \left.\tau X(\De)\right.\!\!|_{U_{I}}\ar[r] \ar[d] & \C^{I}\times \C^{I} \ar[d,"pr_{1}"]\\
    U_{I} \ar[r,"\varphi_{I}"] &\C^{I}
\end{tikzcd}    
\end{center}
Therefore we get a grading of $T$:
\[ T=\bigoplus_{i\in I}T_{[\chi^{\alpha^{I}_{i}}]} \]
For $I\in \Si^{(n)}$ and $k\in I$,
\[T^{k}(\mu)=\sum_{\langle \chi^{\alpha^{I}_{i}},\beta_{k}\rangle =\delta_{ik}1\geq_{s}\mu}T_{[\chi^{\alpha^{I}_{i}}]}  \]
\begin{proof} [Proof of Theorem ~\ref{canonical}]
We will do the following:\\
1. Show that the poset map $f_{x_{0}}: P\to P_{\tau}$ corresponding to $\bigoplus_{i=1}^{m} L_{i}\to \tau X(\De)$ in \cite{IFM12} is surjective, which shows that $\bigoplus_{i=1}^{m} L_{i}\to \tau X(\De)$ is surjective. \\
2. Prove that the poset $\{F^{k}(\mu):=\ker(f^{k}_{\mu})| k\in \Si^{(1)}, \mu\in \MR\}$ is the poset associated with $X(\De)\times F$ with $F=\Lie(\Ker\lambda)$, which shows that $X(\De)\times F$ is the kernel of $\bigoplus_{i=1}^{m} L_{i}\to \tau X(\De)$. Here $f^{k}_{\mu}:E^{k}(\mu)\to T^{k}(\mu)$ is the restriction of $f_{x_{0}}:E\to T$. \\
$ $\\
\noindent
{\bf 1. Surjectivity of $f_{x_{0}}: P\to P_{\tau}$:}\\
The quotient map \[q:U(\Si)\to U(\Si)/\Ker\lambda=X(\De)\]
induces 
\[ \tau U(\Si)=U(\Si)\times \C^{m} \stackrel{dq}\longrightarrow \tau X(\De). \]
Since $\Ker\lambda$ acts on $\tau X(\De)$ trivially, by passing to the quotient, we get \[(U(\Si)\times \C^{m})/\Ker\lambda=\bigoplus_{i=1}^{m} L_{i}\stackrel{f}\longrightarrow \tau X(\De). \]
To find $f_{x_{0}}$, we need to compute $f$ in local charts. Given $I\in \Si^{(n)}$, we have the following commutative diagram
\begin{center}
    \begin{tikzcd}
       U(I)\times \R^{2m} \ar[rdd,"dq_{I}"] \ar[d,"\simeq"] &\\
        U(I)\times \C^{m} \ar[d,"\pi"]  &\\
        (U(I)\times \C^{m})/\Ker\lambda \ar[r,"f"] \ar[d,"\Phi_{I}"',"\simeq"] &\C^{I}\times (\R^{2})^{I}\\
        U_{I}\times \C^{m} \ar[d,"\simeq"] &\\
        U_{I}\times \R^{2m} \ar[uur,"f_{I}"']
    \end{tikzcd}
\end{center}
where $q_{I}=\varphi_{I}\circ q$ and $\Phi_{I}$ is induced by the local charts of $L_{i}$ given above. To specify the maps in this diagram, we use the following ordered bases for the tangent spaces: Let $z_{j}=x_{j}+iy_{j},j=1,\cdots,m$ be the complex coordinates of $U(I)$ and $(\frac{\partial}{\partial x_{j}})_{j\in I},(\frac{\partial}{\partial x_{j}})_{j\notin I},(\frac{\partial}{\partial \overline{x}_{j}})_{j\in I},(\frac{\partial}{\partial \overline{x}_{j}})_{j\notin I}$ be the ordered local frame of $T_{\R}U(I)$ where inside $(\frac{\partial}{\partial x_{j}})_{j\in I}$ the vector fields $\frac{\partial}{\partial x_{j}}$ are ordered according to the natural order on $I$ and similarly on the other three packages. Similarly let $z'_{j}=x_{j}'+iy_{j}',j\in I$ be the complex coordinates of $\C^{I}$ and $(\frac{\partial}{\partial x_{j}'})_{j\in I},(\frac{\partial}{\partial y_{j}'})_{j\in I}$. Then 
\begin{center}
    \begin{tikzcd}
       (z,w_{\R}) \ar[rdd, maps to, "dq_{I}"] \ar[d,maps to] &\\
        (z,w_{\C}) \ar[d,maps to,"\pi"]  &\\
        {[z,w_{\C}]} \ar[r,maps to,"f"] \ar[d, maps to,"\Phi_{I}"'] &(q_{I}(z),J_{I}(z)w_{\R})\\
        {[z,v_{\C}]} \ar[d,maps to] &\\
        {[z,v_{\R}]}\ar[uur,maps to,"f_{I}"']
    \end{tikzcd}
\end{center}
where 
\begin{align*}
    w_{\R}&=(\Tilde{X}_{1},\cdots,\Tilde{X}_{m},\Tilde{Y}_{1},\cdots,\Tilde{Y}_{m})^{T}, &w_{\C}&=(w_{1},\cdots,w_{m})^{T}=(\Tilde{X}_{1}+i\Tilde{Y}_{1},\cdots, \Tilde{X}_{m}+i\Tilde{Y}_{m})^{T},\\
    v_{\R}&=(X_{1},\cdots,X_{m},Y_{1},\cdots,Y_{m})^{T}, &v_{\C}&=(v_{1},\cdots,v_{m})^{T}=(X_{1}+iY_{1},\cdots,X_{m}+iY_{m})^{T},\\
    v_{i}&=\begin{cases}
    w_{i}\prod_{l\notin I}z_{l}^{\langle \alpha^{I}_{i},\beta_{l}\rangle}, \quad &i\in I,\\
    \frac{w_{i}}{z_{i}}, \quad &i\notin I.
\end{cases} &
\end{align*}
Let $C(z)$ be the $m\times m$ diagonal matrix whose $(i,i)$ entry is $a_{ii}=\begin{cases}
    \frac{1}{\prod_{l\notin I}z_{l}^{\langle \alpha^{I}_{i},\beta_{l}\rangle}}, \quad &i\in I,\\
    z_{i}, \quad &i\notin I.
\end{cases}$, then $w_{\C}=C(z)v_{\C}$.
Notice that $C(x_{0})=I_{m}$, then following the diagram above, we have 
\[f_{x_{0}}(v_{\R})=J_{I}(x_{0})v_{\R}.\]
The differential $dq_{I}$ is $\R$-linear but the expression of $q_{I}$ is given in complex coordinates, so we pass to its complexification \[dq_{I}\otimes_{\R}\C:U(I)\times \C^{2m}\to \C^{I}\times (\C^{2})^{I}\] to compute the matrix form of $dq_{I}$.
Let $u_{j}(z)$ be the $j$-coordinate of $\varphi_{I}(z)$, i.e. $u_{j}(z)=z_{j}\prod_{l\notin I}z_{l}^{\langle \alpha^{I}_{j},\beta_{l}\rangle}$, then the matrix form $J_{I,\C}(z)$ of $dq_{I}\times_{\R}\C$ restricted to $z$ is 
\[J_{I,\C}(z)=
\begin{pmatrix}
    A(z) &B(z) \\
    \overline{B(z)} &\overline{A(z)}
\end{pmatrix}
\]
where $A(z)$ is the $n\times m$ matrix whose $(j,k)$-entry is $\frac{\partial u_{j}}{\partial z_{k}}(z), j\in I, 1\leq k\leq m$ and 
$B(z)$ is the $n\times m$ matrix whose $(j,k)$-entry is $\frac{\partial u_{j}}{\partial \overline{z}_{k}}(z), j\in I, 1\leq k\leq m$.
The matrix $J_{I}(z)$ is obtained from $J_{I,\C}(z)$ as
\begin{align*}
    J_{I}(z)&=\frac{1}{2}\begin{pmatrix}
    I_{n} &I_{n}\\
    -iI_{n} &iI_{n}
\end{pmatrix}J_{I,\C}(z)\begin{pmatrix}
    I_{m} &iI_{m} \\
    I_{m} &-iI_{m}
\end{pmatrix}\\
&=\frac{1}{2}\begin{pmatrix}
    A(z)+\overline{A(z)}+B(z)+\overline{B(z)} &i(A(z)-\overline{A(z)}-B(z)+\overline{B(z)})\\
    -i(A(z)+B(z))+i(\overline{A(z)}+\overline{B(z)}) &A+\overline{A(z)}-B(z)-\overline{B(z)}
\end{pmatrix}
\end{align*}
For each \(l\notin I\) write
\[
a_{jl} := \langle \alpha^I_j,\beta_l\rangle = (b_{jl}+i\,c_{jl},\,v_{jl}) \in \mathbb{C}\times\mathbb{Z}.
\]
We define the power by
\[
z^{a_{jl}}:= |z|^{\,b_{jl}}\,\exp\Bigl(i\,c_{jl}\,\ln|z|\Bigr)\left(\frac{z}{|z|}\right)^{v_{jl}},
\]
so that the function \(z\mapsto z^{a_{jl}}\) is smooth and complex–valued (after choosing smooth branches for \(\ln|z|\) and \(\arg z\)).

It is convenient to write
\[
z^{a_{jl}} = \exp\Bigl(F_{jl}(z,\overline{z})\Bigr),
\]
with
\[
F_{jl}(z,\overline{z}) = \frac{b_{jl}+i\,c_{jl}}{2}\,\ln(z\overline{z})
+\frac{v_{jl}}{2}\,\ln\Bigl(\frac{z}{\overline{z}}\Bigr).
\]
Then the chain rule gives
\[
\frac{\partial F_{jl}}{\partial z} = \frac{b_{jl}+i\,c_{jl}}{2}\cdot\frac{1}{z}+\frac{v_{jl}}{2}\cdot\frac{1}{z} 
=\frac{(b_{jl}+i\,c_{jl})+v_{jl}}{2z},
\]
and similarly,
\[
\frac{\partial F_{jl}}{\partial\overline{z}} = \frac{b_{jl}+i\,c_{jl}}{2}\cdot\frac{1}{\overline{z}}
-\frac{v_{jl}}{2}\cdot\frac{1}{\overline{z}} 
=\frac{(b_{jl}+i\,c_{jl})-v_{jl}}{2\overline{z}}.
\]
Thus, for the single–variable factor we have
\[
\frac{\partial}{\partial z} z_l^{a_{jl}} = \frac{(b_{jl}+i\,c_{jl})+v_{jl}}{2z_l}\,z_l^{a_{jl}},
\]
and
\[
\frac{\partial}{\partial\overline{z}} z_l^{a_{jl}} = \frac{(b_{jl}+i\,c_{jl})-v_{jl}}{2\overline{z}_l}\,z_l^{a_{jl}}.
\]

Returning to
\[
u_j(z)= z_j \prod_{l\notin I} z_l^{a_{jl}},
\]
we consider three cases:

\bigskip

\textbf{Case 1.} \(k=j\).\\[1mm]
We have
\[
\frac{\partial u_j}{\partial z_j} = \frac{\partial z_j}{\partial z_j}\,\prod_{l\notin I} z_l^{a_{jl}}
=\prod_{l\notin I} z_l^{a_{jl}} = \frac{u_j(z)}{z_j},
\]
and, since \(z_j\) is holomorphic,
\[
\frac{\partial u_j}{\partial\overline{z}_j} = 0.
\]

\bigskip

\textbf{Case 2.} \(k\neq j\) and \(k\notin I\).\\[1mm]
The only factor that depends on \(z_k\) (and \(\overline{z}_k\)) is 
\[
f_{jk}(z_k) = z_k^{a_{jk}},
\]
with
\[
a_{jk} = (b_{jk}+i\,c_{jk},\,v_{jk}).
\]
Thus, from the formulas above,
\[
\frac{\partial}{\partial z_k}z_k^{a_{jk}} = \frac{(b_{jk}+i\,c_{jk})+v_{jk}}{2z_k}\,z_k^{a_{jk}},
\]
and
\[
\frac{\partial}{\partial\overline{z}_k}z_k^{a_{jk}} = \frac{(b_{jk}+i\,c_{jk})-v_{jk}}{2\overline{z}_k}\,z_k^{a_{jk}}.
\]
Since the other factors in \(u_j(z)\) do not depend on \(z_k\) or \(\overline{z}_k\), the product rule yields
\[
\frac{\partial u_j}{\partial z_k} = z_j \left(\prod_{l\notin I,\;l\neq k} z_l^{a_{jl}}\right)
\frac{\partial}{\partial z_k}z_k^{a_{jk}}
= \frac{(b_{jk}+i\,c_{jk})+v_{jk}}{2z_k}u_j(z),
\]
and
\[
\frac{\partial u_j}{\partial\overline{z}_k} = \frac{(b_{jk}+i\,c_{jk})-v_{jk}}{2\overline{z}_k}u_j(z).
\]

\bigskip

\textbf{Case 3.} \(k\in I\) with \(k\neq j\).\\[1mm]
Here the variable \(z_k\) does not appear in either \(z_j\) or in the product (which is over \(l\notin I\)), so
\[
\frac{\partial u_j}{\partial z_k} = 0,\quad \frac{\partial u_j}{\partial\overline{z}_k} = 0.
\]
Hence
\begin{align*}
&A(z)=\begin{pmatrix}
       diag\Bigl((\frac{u_{j}}{z_{j}})_{j\in I}\Bigr) &\Bigl(\frac{(b_{jk}+i\,c_{jk})+v_{jk}}{2z_k}u_j\Bigr)_{j\notin I}
    \end{pmatrix},& &B(z)=\begin{pmatrix}
        \0 
        &\Bigl(\frac{(b_{jk}+i\,c_{jk})-v_{jk}}{2\overline{z}_k}\, u_j\Bigr)_{j\notin I}
    \end{pmatrix}\\
    &A(x_{0})=\begin{pmatrix}
        I_{n}
        &\Bigl(\frac{(b_{jk}+i\,c_{jk})+v_{jk}}{2}\Bigr)_{j\notin I}
    \end{pmatrix}, & &B(x_{0})=\begin{pmatrix}
    \0 
        &\Bigl(\frac{(b_{jk}+i\,c_{jk})-v_{jk}}{2}\Bigr)_{j\notin I}
    \end{pmatrix}
\end{align*}
 and 
 \[ J_{I}(x_{0})=\begin{pmatrix}
     I_{n} &(b_{jk})_{j\in I, k\notin I}) &\0 &\0\\
     \0 &(c_{jk})_{j\in I, k\notin I}) &I_{n} &(v_{jk})_{j\in I, k\notin I})
 \end{pmatrix}.\]
$f_{x_{0}}:E\to T$ is surjective as $J_{I}(x_{0})$ is of full rank $2n$. Since every map in our construction is $\T$-equivariant, so is $f$, therefore $f$ preserves gradings of $E$ and $T$ and consequently $f(E^{k}(\mu))=T^{k}(\mu)$.\\
$ $\\
{\bf 2. Poset of the kernel:}\\
Let $F=\Ker(f_{x_{0}})$. We first show that $F=\Lie(\Ker\lambda)$.
An element 
\[
h=(h_1,\dots,h_m)\in (\CS)^{m}
\]
lies in $\Ker\lambda$ if and only if
\[
h_i\prod_{k\notin I}h_k^{\langle \alpha^I_i, \beta_k\rangle}=1\quad \text{for every } i\in I
\] by equation (\ref{aibk}).
Now take a smooth curve 
\[
h(t)=(h_1(t),\dots,h_m(t))\quad \text{in } \Ker\lambda,
\]
with \(h(0)=(1,\dots,1)\) and \(h'(0)=(x_{1},\cdots,x_{m},y_{1},\cdots,y_{m})\in \R^{2m}\). Write
\[
h_k(t)=\exp(t(x_{k}+iy_{k})),
\]
then the defining equation becomes, for each \(i\in I\),
\[
\exp\Bigl(t(x_i+iy_i)\Bigr)
\prod_{k\notin I} \exp\Bigl(t\Bigl[(b_{ik}+i\,c_{ik})\,x_k + i\,v_{ik}\,y_k\Bigr]\Bigr)
=\exp\Biggl(t\Bigl[x_i+iy_i+\sum_{k\notin I}\Bigl((b_{ik}+i\,c_{ik})\,x_k + i\,v_{ik}\,y_k\Bigr)\Bigr]\Biggr)=1
\]
for all \(t\). Equivalently,
\[
x_i + iy_i + \sum_{k\notin I}\Bigl[(b_{ik}+i\,c_{ik})\,x_k + i\,v_{ik}\,y_k\Bigr] = 0.
\]
Separating into real and imaginary parts, for every \(i\in I\) we obtain:
\[
\textbf{(Real part):} \quad x_i + \sum_{k\notin I} b_{ik}\,x_k = 0,
\]
\[
\textbf{(Imaginary part):} \quad y_i + \sum_{k\notin I} \Bigl( c_{ik}\,x_k + v_{ik}\,y_k \Bigr) = 0.
\]
This shows exactly that $F=\Lie(\Ker\lambda)$.
$ $\\
Given $I\in \Si^{(n)},k\in I,\mu\in \MR$, consider three cases.\\
Case 1: If $0\geq_{s}\mu$, then 
\[f^{k}_{\mu}=f: E\to T,\]
therefore $F^{k}(\mu)=F$.\\
Case 2: If $1\geq_{s}\mu, 0\ngeq_{s} \mu$, then 
\[f^{k}_{\mu}:E_{k} \simeq T_{\chi^{\alpha^{I}_{k}}} \]
because the corresponding matrix is $1\in GL_{1}(\C)$, therefore $F^{k}(\mu)=0$.\\
Case 3: If $1\ngeq_{s}\mu, 0\ngeq_{s}\mu$, then 
\[f^{k}_{\mu}:0\to 0,\]
therefore $F^{k}(\mu)=0$.\\
Hence $\{F^{k}(\mu)| k\in \Si^{(1)},\mu\in \MR\}$ is exactly the poset associated with $X(\De)\times \Lie(\Ker\lambda)$.
\end{proof}

\subsection{Non-toric example}
In §5 of \cite{IFM12}, the authors constructed the following topological toric manifold $X$ that is not a toric variety. We recall its combinatorial data and compute the poset maps associated with the canonical sequence in theorem ~\ref{canonical} for this manifold. We will use the notation in equation ~\ref{dual}, so a ray $\beta=((b_{1}+ic_{1},v_{1}),(b_{2}+ic_{2},v_{2}))\in \MR^{2}$ is identified with the matrix 
$\begin{pmatrix}
    b_{1} &0\\
    c_{1} &v_{1}\\
    b_{2} &0 \\
    c_{2} &v_{2}
\end{pmatrix}$ and a dual vector $\alpha=((b_{1}+ic_{1},v_{1}),(b_{2}+ic_{2},v_{2}))$ is identified with
$\begin{pmatrix}
    b_{1} &0 &b_{2} &0\\
    c_{1} &v_{1} &c_{2} &v_{2}
\end{pmatrix}$.\\
{\bf Topological fan of $X$:} Let $\Si$ be an abstract simplicial complex defined by \[\Si:=\{\emptyset, \{1\},\{2\},\{3\},\{4\},J_{1}=\{1,2\},J_{2}=\{2,3\},J_{3}=\{3,4\},J_{4}=\{4,1\}\}. \]
The rays are given by 
\begin{align*}
    \beta_{1}&=\begin{pmatrix}
        1 &0\\
        0 &1\\
        0 &0\\
        0 &0
    \end{pmatrix}, 
    &\beta_{2}&=\begin{pmatrix}
        0 &0\\
        0 &0\\
        1 &0\\
        0 &1
    \end{pmatrix},
    &\beta_{3}&=\begin{pmatrix}
        -1 &0\\
        0 &-1\\
        0 &0\\
        0 &-2
    \end{pmatrix},
    &\beta_{4}&=\begin{pmatrix}
        -1 &0\\
        0 &-1\\
        -1 &0\\
        0 &-1
    \end{pmatrix}.
\end{align*}
The dual vectors are given by 
\begin{align*}
    \alpha^{J_{1 }}_{1 }&=\begin{pmatrix}
    1   &0  &0  &0\\
    0  &1   &0  &0
    \end{pmatrix},
    &\alpha^{J_{1 }}_{2 }&=\begin{pmatrix}
    0   &0  &1  &0\\
    0  &0   &0  &1
    \end{pmatrix},\\
    \alpha^{J_{2 }}_{2 }&=\begin{pmatrix}
    0   &0  &1  &0\\
    0  &-2   &0  &1
    \end{pmatrix},
    &\alpha^{J_{2 }}_{3 }&=\begin{pmatrix}
    -1   &0  &0  &0\\
    0  &-1   &0  &0
    \end{pmatrix},\\
    \alpha^{J_{ 3}}_{ 3}&=\begin{pmatrix}
    -1   &0  &1  &0\\
    0  &1   &0  &-1
    \end{pmatrix},
    &\alpha^{J_{3 }}_{4}&=\begin{pmatrix}
    0   &0  &-1  &0\\
    0  &-2   &0  &1
    \end{pmatrix},\\
    \alpha^{J_{ 4}}_{4 }&=\begin{pmatrix}
    0   &0  &-1  &0\\
    0  &0   &0  &-1
    \end{pmatrix},
    &\alpha^{J_{4 }}_{1 }&=\begin{pmatrix}
    1   &0  &-1  &0\\
    0  &1   &0  &-1
    \end{pmatrix}.
\end{align*}
{\bf Poset maps of the canonical sequence:}\\
Step 1: Fix an arbitrary point $x_{0}$ in the open dense torus of $X$. For simplicity, we choose $x_{0}=(1,1)\in \T$. \\
Step 2: Choose an arbitrary top dimensional cone $I$. Here we choose $I=J_{1}$. \\
Step 3: Compute $a_{jl} := \langle \alpha^I_j,\beta_l\rangle = (b_{jl}+i\,c_{jl},\,v_{jl})$. By simple calculation, we have 
\begin{align*}
    a_{13}&=(-1,-1), &a_{14}=(-1,-1)\\
    a_{23}&=(0,-2), &a_{24}=(-1,-1).
\end{align*}
Hence $J_{I}(x_{0})=\begin{pmatrix}
    1 &0 &-1 &-1 &0 &0 &0 &0\\
    0 &1 &0 &-1 &0 &0 &0 &0\\
    0 &0 &0 &0 &1 &0 &-1 &-1\\
    0 &0 &0 &0 &0 &1 &-2 &-1
\end{pmatrix}$ is the matrix form of $f_{x_{0}}$.

\section{Holomorphic Klyachko vector bundles}
\begin{defi}
Let $X(\De)$ be a topological toric manifold with a $\T$-equivariant complex structure. A holomorphic Klyachko vector bundle of rank $r$ over $X(\De)$ is a $\T$-equivariant holomorphic complex vector bundle $\ME\to X(\De)$ whose restriction $\left. \ME \right|_{U_{I}}$ is equivariantly biholomorphic to $U_{I}\times \C^{r}$ for all $I\in \Si$ and whose action maps $\T\times \ME\to \ME$ and $\T\times X(\De)\to X(\De)$ are holomorphic.
\end{defi}

Define $diag(\Z):=\{(v,v)\in \MR | v\in \Z\}$ and $diag(\Z^{n})=\{(\alpha,\cdots,\alpha)\in \MR^{n} | \alpha\in diag(\Z)\}$.

\begin{lemm} \label{holchar}
   If a smooth character $\chi^{\alpha}: \T\to \CS$ is holomorphic for $\alpha\in \MR$, then $\alpha\in diag(\Z)$. 
\end{lemm}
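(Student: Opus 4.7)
The plan is to reduce the statement to the one–variable case and then compute the Cauchy--Riemann operator on $\chi^\alpha$ by hand. First I would note that since $\chi^\alpha(g_1,\dots,g_n)=\prod_k g_k^{\alpha^k}$ factors through the one–variable characters $g\mapsto g^{\alpha^k}$, holomorphicity of $\chi^\alpha$ is equivalent to holomorphicity of each $g\mapsto g^{\alpha^k}$ separately, so the statement for a single factor $\alpha\in\MR$ propagates coordinate by coordinate to the general $\alpha\in\MR^n$ setting. Thus it is enough to prove: if $\alpha=(b+ci,v)\in\MR$ and $\chi^\alpha:\CS\to\CS$ is holomorphic, then $c=0$ and $b=v\in\Z$.

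Next I would rewrite the defining formula $\chi^\alpha(g)=|g|^{b+ci}(g/|g|)^v$ in a form suitable for applying $\partial/\partial\bar g$. Using the local holomorphic identities $\log|g|=\tfrac{1}{2}(\log g+\log\bar g)$ and $\log(g/|g|)=\tfrac{1}{2}(\log g-\log\bar g)$ on any simply connected open subset of $\CS$ with chosen branches, one obtains
$$\chi^\alpha(g)=\exp\!\Bigl(\tfrac{(b+ci)+v}{2}\log g+\tfrac{(b+ci)-v}{2}\log\bar g\Bigr).$$
Differentiating this expression gives the globally defined smooth function
$$\frac{\partial \chi^\alpha}{\partial \bar g}(g)=\frac{(b+ci)-v}{2\bar g}\,\chi^\alpha(g),$$
which is independent of the local branch chosen. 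Since $\chi^\alpha$ is nowhere zero on $\CS$, the holomorphicity condition $\partial\chi^\alpha/\partial\bar g\equiv 0$ is equivalent to the single equation $b+ci=v$. Separating real and imaginary parts and recalling that $v\in\Z$ while $b,c\in\R$, this forces $c=0$ and $b=v\in\Z$, so $\alpha=(v,v)\in diag(\Z)$, as required.

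The only mildly delicate point is the bookkeeping for the multivalued logarithms in the intermediate identity; however, this is purely notational, since the resulting formula for $\partial/\partial\bar g$ is local and branch–independent, and the conclusion $b+ci=v$ is a pointwise equation that holds on all of $\CS$. Everything else is a routine computation, and no appeal to the structure of holomorphic maps beyond the Cauchy--Riemann equation is required.
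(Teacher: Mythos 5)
Your proposal is correct and is essentially the paper's argument: both proofs verify the Cauchy--Riemann condition for $\chi^{\alpha}$ by direct computation and conclude $c=0$, $b=v\in\Z$. The only difference is cosmetic --- the paper writes out the polar-coordinate Cauchy--Riemann equations for $U+iV$ and arrives at $(b-v)^{2}=-c^{2}$, whereas you compute the Wirtinger derivative $\partial\chi^{\alpha}/\partial\bar g=\tfrac{(b+ci)-v}{2\bar g}\,\chi^{\alpha}$ and read off $b+ci=v$ directly, which is a slightly cleaner packaging of the same calculation.
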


\begin{proof}
Let $\alpha=(b+ci,v)\in \MR$ and $f(\rho,\theta)=\rho^{b+ci}e^{i\theta v}=\rho^{b}e^{ci\ln{\rho}+i\theta v}=U(\rho,\theta)+iV(\rho,\theta)$ where $U(\rho,\theta)=\rho^{b}\cos(c\ln{\rho}+\theta v)$  and $V(\rho,\theta)=\rho^{b}\sin(c\ln{\rho}+\theta v)$. Because $\chi^{\alpha}$ is holomorphic, $U,V$ must satisfy the Cauchy-Riemann equations 
\begin{align*}
    \frac{\partial U}{\partial \rho}&=\frac{1}{\rho}\frac{\partial U}{\partial \theta},\\
    \frac{\partial V}{\partial \rho}&= -\frac{1}{\rho}\frac{\partial U}{\partial \theta}.
\end{align*}
By simple computation, 
\begin{align*}
    \frac{\partial U}{\partial \rho}&= b\rho^{b-1}\cos(w)-\rho^{b-1}c\sin(w),\\
    \frac{1}{\rho}\frac{\partial U}{\partial \theta}&=\rho^{b-1}v\cos(w), \\
    \frac{\partial V}{\partial \rho}&= b\rho^{b-1}\sin(w)+\rho^{b-1}c\cos(w),\\
    -\frac{1}{\rho}\frac{\partial U}{\partial \theta}&= \rho^{b-1}v\sin(w).
\end{align*}
where $w=c\ln{\rho}+\theta v$.\\
After simplification, we get 
\begin{align*}
    (b-v)\cos(w)&=c\sin(w),\\
    (b-v)\sin(w)&=-c\cos(w).
\end{align*}
Therefore $(b-v)^{2}=-c^{2}$, i.e. $b=v\in \Z, c=0$, i.e. $\alpha\in diag(\Z)$.
\end{proof}

\begin{theo} \label{main4}
    A holomorphic Klyachko vector bundle $\ME$ over a complex topological toric manifold $X=X(\De)$ is $\T$-equivariantly biholomorphic to a toric vector bundle over a toric variety. 
\end{theo}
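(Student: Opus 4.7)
The proof hinges on Lemma~\ref{holchar}, which we apply twice: once to the underlying topological toric manifold and once to the bundle data.

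For the base, a $\T$-equivariant complex structure on $X(\De)$ makes the action map $\T\times X(\De)\to X(\De)$ holomorphic. At each fixed point $\gamma_I$ (for $I\in\Si^{(n)}$), the induced linear representation of $\T$ on $T_{\gamma_I}X(\De)$ must therefore be holomorphic, so each of its weights is a holomorphic character. Via the equivariant chart $\varphi_I$ of Theorem~\ref{chart}, these weights are precisely the $\alpha^I_i$ for $i\in I$, so by Lemma~\ref{holchar} each $\alpha^I_i\in diag(\Z^n)$. Translating the matrix identity (\ref{dual}) into its real, imaginary, and integer components, the fact that the $\alpha$-matrix lies in $M_n(diag(\Z))$ forces its integer block to be invertible over $\Z$; both the complex and integer parts of the $\beta$-matrix must then equal this integer inverse, yielding $\beta_i\in diag(\Z^n)$ for every ray appearing in a top-dimensional cone. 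By the Remark following Theorem~\ref{chart}, $X(\De)$ is therefore a nonsingular complete toric variety.

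For the bundle, since $\ME|_{U_I}$ is $\T$-equivariantly biholomorphic to $U_I\times\C^r$, the representation of the stabilizer $\T_I$ on the fiber $E=\ME(\gamma_I)$ is holomorphic. Hence every weight appearing in the decomposition $E=\bigoplus_{[\chi]\in\MR^n/\bp}E_{[\chi]}$ is a holomorphic character of $\T_I$; combining Lemmas~\ref{holchar} and~\ref{character} shows these weights lie in $diag(\Z^n)/(\bp\cap diag(\Z^n))$, which is exactly the character lattice of $\T_I$ viewed as an algebraic subtorus. One checks directly that on $diag(\Z)\subset\MR$ the partial order $\geq_{s}$ reduces to the usual total order on $\Z$, so restricting the poset $\{E^i(\mu)\}$ of Definition~\ref{SKVS} to $\mu\in diag(\Z)$ yields classical Klyachko data $(E,\{E^{\rho}(i)\})$ satisfying Klyachko's compatibility condition. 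The toric vector bundle $\ME'$ reconstructed from this data via Klyachko's theorem coincides with the bundle associated to our poset via Theorem~\ref{main2}; its transition functions are Laurent monomials with integer exponents and are therefore holomorphic, so $\ME'$ is $\T$-equivariantly biholomorphic to $\ME$.

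The most delicate step is the first one: justifying cleanly that the $\T$-equivariant complex structure on $X(\De)$ forces the tangential weights $\alpha^I_i$ at $\gamma_I$ to be holomorphic characters. The cleanest route is via the linearization of the holomorphic $\T$-action at the isolated fixed point $\gamma_I$, together with the observation that this action is simultaneously diagonalized by the $\chi^{\alpha^I_i}$ through $\varphi_I$. A secondary subtlety is the passage from $\alpha^I_i\in diag(\Z^n)$ to $\beta_i\in diag(\Z^n)$: since $\MR=\C\times\Z$ is not a domain, one cannot argue by ``inverting in a field of fractions'', and the duality must instead be unpacked into its real, imaginary, and integer components in $M_n(\MR)$ separately before invoking the non-singularity hypothesis on the fan.
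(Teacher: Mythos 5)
Your handling of the bundle data and of the passage from $\alpha^{I}_{i}\in diag(\Z^{n})$ to $\beta_{i}\in diag(\Z^{n})$ is sound and essentially matches the paper (you are in fact more explicit than the paper about unpacking the duality (\ref{dual}) into real, imaginary and integer blocks, and about checking that $\geq_{s}$ restricts to the usual order on $diag(\Z)$).

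The genuine gap is in your first step. The paper does not obtain $\alpha^{I}_{i}\in diag(\Z^{n})$ from a linearization argument; it invokes the proof of Theorem 1.2 of \cite{Ish11}, where Ishida shows that for a $\T$-equivariant complex structure the smooth equivariant charts $\varphi_{I}$ are automatically \emph{holomorphic} charts, after which the holomorphy of $g\mapsto\chi^{\alpha^{I}_{i}}(g)$ is immediate. Your linearization route does not close this: $\varphi_{I}$ is a priori only a smooth chart, so $d\varphi_{I}$ at $\gamma_{I}$ identifies $T_{\gamma_{I}}X$ with $\bigoplus_{i\in I}V(\chi^{\alpha^{I}_{i}})$ only as a \emph{real} $\T$-module. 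The invariant complex structure $J$ must preserve each $2$-dimensional isotypic block $V(\chi^{\alpha^{I}_{i}})$ (the $v$-parts of the $\alpha^{I}_{i}$ are distinct since they form the inverse of the matrix of the $v_{i}$), but on each block $J$ can be either the standard structure or its conjugate. In the conjugate case the holomorphic weight of the linearized action on that block is $\overline{\chi^{\alpha^{I}_{i}}}$, and Lemma~\ref{holchar} then forces the components of $\alpha^{I}_{i}$ to have the anti-diagonal form $(-v,v)$ rather than lie in $diag(\Z)$. This possibility really occurs at the level of a single chart: $\C$ with the conjugate complex structure and the $\CS$-action of weight $(1,-1)$ has a holomorphic action map, so matching underlying real representations pins down each $\alpha^{I}_{i}$ only up to this conjugation ambiguity. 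Excluding the conjugated blocks is precisely the global/integrability input supplied by Ishida's theorem; without citing it (or supplying an equivalent argument), your first step does not yield $\alpha^{I}_{i}\in diag(\Z^{n})$, and everything downstream of it is unsupported.
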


\begin{proof}
In the proof of theorem 1.2 in \cite{Ish11}, Ishida showed that for a topological toric manifold $X$ with $\T$-equivariant complex structure, the equivariant smooth charts $\varphi_{I}:U_{I}\to V(\bigoplus_{i\in I}\chi^{\alpha^{I}_{i}})$ are actually equivariant holomorphic charts. Since the $\T$-action is holomorphic, the map 
\begin{align*}
    \T &\longrightarrow V(\bigoplus_{i\in I}\chi^{\alpha^{I}_{i}})\\
    g &\longmapsto (\chi^{\alpha^{I}_{i}}(g)z_{i})_{i\in I}
\end{align*}
is holomorphic for each $I\in \Si^{(n)}, g\in \T, z\in V(\bigoplus_{i\in I}\chi^{\alpha^{I}_{i}})$, in particular 
\begin{align*}
    \T &\longrightarrow \CS\\
    g &\longmapsto \chi^{\alpha^{I}_{i}}(g)
\end{align*}
is holomorphic for all $I\in \Si^{(n)}, i\in I$. By lemma ~\ref{holchar}, $\alpha^{I}_{i}\in diag(\Z^{n})$. Hence $\beta_{i}\in diag(\Z^{n})$, i.e. the topological toric fan $\De$ comes from an ordinary fan $\Si'$, i.e. $X$ is a toric variety. Now fix a point $x_{0}$ in the open dense torus and let $E$ be the fiber of $\ME$ over $x_{0}$. For each $I\in \Si^{(n)}$, $\left.\ME \right.\!|_{U_{I}}$ is $\T$-equivariantly biholomorphic to $U_{I}\times E$ and the $\T$-module $E$ decomposes into weight spaces 
\[E=\bigoplus_{\chi}E_{\chi}.\]
By lemma ~\ref{holchar}, each $\chi$ appearing in the decomposition must be algebraic. In the proof of theorem ~\ref{main}, we showed that the transition functions $f_{IJ}$ of $\ME$ is determined by the weighs $\chi$, hence $f_{IJ}$ are algebraic and $\ME$ is exactly a toric vector bundle.
\end{proof}

\end{document}